\begin{document}

\newcommand{\barint}{\overline{\hspace{.65em}}\!\!\!\!\!\!\int}
\newcommand{\ep}{\varepsilon}
\newcommand{\eps}{\varepsilon}
\newcommand{\ue}{u_\ep}
\newcommand{\ve}{v_\ep}
\newcommand{\vp}{{\varphi}}
\newcommand{\vez}{{v_\ep^z}}
\newcommand{\hep}{h_\ep}
\newcommand{\jep}{j_\ep}
\newcommand{\loc}{ {\mbox{\scriptsize{loc}}} }
\newcommand{\R}{{\mathbb R}}
\newcommand{\C}{{\mathbb C}}
\newcommand{\J}{{\mathbb J}}
\newcommand{\T}{{\mathbb T}}
\newcommand{\TL}{{\mathbb T}_{L}}
\newcommand{\Z}{{\mathbb Z}}
\newcommand{\N}{{\mathbb N}}
\newcommand{\Q}{{\mathbb Q}}
\newcommand{\Hdf}{{\mathcal{H}}}
\newcommand{\calD}{{\mathcal{D}}}
\newcommand{\calL}{{\mathcal{L}}}
\newcommand{\calW}{{\mathcal{W}}}
\newcommand{\calF}{{\mathcal{F}}}
\newcommand{\calG}{{\mathcal{G}}}
\newcommand{\calB}{{\mathcal{B}}}
\newcommand{\calH}{{\mathcal{H}}}
\newcommand{\calJ}{{\mathcal{J}}}
\newcommand{\calT}{{\mathcal{T}}}
\newcommand{\calM}{{\mathcal{M}}}
\newcommand{\dist}{\operatorname{dist}}
\newcommand{\spt}{\operatorname{spt}}
\def\rest{\hskip 1pt{\hbox to 10.8pt{\hfill
\vrule height 7pt width 0.4pt depth 0pt\hbox{\vrule height 0.4pt
width 7.6pt depth 0pt}\hfill}}}
\newcommand{\bd}{\partial}

\newcommand{\vol}{\,\mbox{vol}}

\newcommand{\bn}[1]{ ({\texttt{#1}})}

\newcommand{\beq}{\begin{equation}}
\newcommand{\eeq}{\end{equation}}

\def\logeps{{|\!\log\ep|}}

\theoremstyle{plain}
\newtheorem{theorem}{Theorem}
\newtheorem{proposition}{Proposition}
\newtheorem{lemma}{Lemma}
\newtheorem{corollary}{Corollary}

\theoremstyle{definition}
\newtheorem{definition}{Definition}

\theoremstyle{remark}
\newtheorem{remark}{Remark}
\newtheorem{example}{Example}
\newtheorem{warning}{Warning}

\title[ ]
{Dynamics of nearly parallel vortex filaments for the Gross-Pitaevskii equation}

\author{R.L. Jerrard}
\address{Department of Mathematics, University of Toronto, 
			Toronto, Ontario M5S 2E4, Canada.}
\email{rjerrard@math.utoronto.ca}

\author{D. Smets}
\address{Laboratoire Jacques-Louis Lions, Sorbonne Universit\'e, 
			4 place Jussieu BC 187, 75252 Paris Cedex 05, France.}
\email{smets@ann.jussieu.fr}

\begin{abstract}
In \cite{KMD}, Klein, Majda and Damodaran have formally derived a simplified
	asymptotic motion law for the evolution of nearly parallel vortex filaments
	in the context of the three dimensional Euler equation for incompressible
	fluids. In the present work, we rigorously derive the corresponding
	asymptotic motion law in the context of the Gross-Pitaevskii equation. 
\end{abstract}

\maketitle


\section{Introduction}

The mathematical analysis of the evolution of vortex filaments within the
framework of the classical equations for fluids is a challenging problem that
dates back to the second half of the nineteenth century with the works of Kelvin
and Helmholtz. Some ``simplified'' flows have long been considered as potential
candidates for the description of the asymptotic regime of small vortex cores,
the most well-known being the binormal curvature flow of Da Rios over a century
ago, but the convergence proofs in all these cases are missing, and the validity
of the convergence is sometimes questioned too in the literature. 

\smallskip

In \cite{KMD}, Klein Majda and Damodaran have proposed the system 
\begin{equation}\label{KMD}
		\partial_t X_j = J \alpha_j \Gamma_j \partial_{zz} X_j + 
		J\sum_{k \neq j} 2 \Gamma_k \frac{X_j - X_k}{|X_j - X_k|^2}, 
		\qquad j=1,\cdots,n   
\end{equation}
as a simplified candidate model for the evolution of $n$ nearly parallel vortex
filaments in perfect incompressible fluids. This model extends a remark by
Zakharov \cite{Za1} for pairs of anti-parallel filaments, and is expected to be
valid only when  
\begin{enumerate}
	\item[$i)$] the wavelength of the filaments perturbations are large with
		respect to the filaments mutual distances, 
	\item[$ii)$] the latter are large with respect to the size of the filaments
		cores, and
	\item[$iii)$] the Reynolds number is sufficiently large.
\end{enumerate}
In the above formulation, the filaments are assumed to be nearly parallel to the 
z-axis, and after rescaling\footnote{Described further down, otherwise they
wouldn't be anything close to parallel!} each of them is described by a function
$z \mapsto (X_j(z,t),z)$, where $X_j(\cdot,t)$ takes values in $\R^2$, which
represents the horizontal displacement of the filament. The canonical two by two
symplectic matrix is denoted by $J$, the constants $\Gamma_j \in \R$ are the
circulations associated to each vortex filament, and the constants $\alpha_j \in
\R$ are derived from assumptions on the vortex core profiles prior to passing in
the limit.  

From the fluid mechanics point of view, the case $n=1$ in \eqref{KMD} is already
highly interesting and corresponds to a single weakly curved vortex filament.
In that case, system \eqref{KMD} reduces to the free Schr\"odinger equation in
one variable, and as a matter of fact this is also the linearized equation for
the binormal curvature flow around a straight filament.

From a mathematical point of view, system \eqref{KMD} has been studied for his
own (see e.g.  \cite{LiMa, KPV, BaMi1, BaFaMi}) when $n>1$, in particular its
well-posedness and the possibility of colliding filaments under \eqref{KMD}.
Nevertheless, as mentioned already, the justification of the model itself as a
limit from a classical fluid mechanics model (such as the Euler equation or the
Navier-Stokes equation in a vanishing viscosity limit) has so far only been
obtained formally through matched asymptotic, even for $n=1$.  

\smallskip

The goal the present work is to rigorously derive system \eqref{KMD}, for
arbitrary $n \geq 1$, as a limit from (yet another) PDE model whose relation to
fluid mechanics is not new. In that framework, all the limiting circulations
$\Gamma_j$ will end up being equal. Our object of study in this paper is indeed
the Gross-Pitaevskii equation 
\begin{equation}\label{gp}
	i \partial_t\ue - \Delta u + \frac 1{\ep^2}(|\ue|^2-1)\ue=0 
	\qquad\mbox{ in }(0,T)\times \Omega,
\end{equation}
with initial data $\ue(\cdot, 0) = \ue^0(\cdot).$ Here $0<\ep\ll 1$ is a real
parameter, $\Omega = \omega\times \TL$ where $\omega\subset \R^2$ is a bounded
open set with smooth boundary\footnote{Since a rescaling will eventually be made
in the description that sends the lateral boundary to infinity, the exact shape
of $\omega$ is of limited impact on the analysis, and the limit flow for the
filaments does {\it not} depend at all on $\omega.$ Still, some of our later
assumptions for establishing convergence do depend on $\omega$, see e.g.
\eqref{kappan.def}.} and $\TL = \R/{\rm L}\Z$ for some $L >0.$ Without loss of
generality, we shall assume that $0 \in \omega.$ We also consider Neumann
boundary conditions on 
$\partial \omega\times \TL$: 
\[
\nu \cdot \nabla u_\ep = 0 \mbox{ on }\partial\omega\times \TL.
\]

Our main result will describe solutions of \eqref{gp} associated to initial data
$\ue^0$ for vanishing families of $\eps$, and corresponding in a sense to be
described in detail below to $n$ nearly parallel vortex filaments clustered
around the vertical axis $\{0\}\times (0,L)$. 

\subsection{Statement of main result}
We consider the system 
\begin{equation}
	i \partial_t f_j - \partial_{zz} f_j  - 2\sum_{k\ne j} 
	\frac{f_j-f_k}{|f_j-f_k|^2} = 0, \qquad j=1,\ldots, n
\label{IVF}\end{equation}
for $f \equiv (f_1,\cdots, f_n)\: : \TL \times \R\to \C^n$. This is the Klein
Majda and Damodaran system \eqref{KMD} in the special case where all constants
are equal and normalized to unity.   

For $f\in H^1(\TL,\C^n)$, we define
\[
	G_0(f) :=\pi \int_0^L \left(\frac 12 \sum_{i=1}^n| f_i'|^2 -
	\sum_{i\ne j}\log|f_i-f_j| \right) dz,
\]
it is the Hamiltonian associated to the equation \eqref{IVF}. We also set 
\[
	\rho_f := \inf_{z\in (0,L), \ j\ne k}|f_j(z)-f_k(z)|.
\]
A sufficient condition for the Hamiltonian $G_0(f)$ to be finite is that $\rho_f
> 0.$ For $f^0 \in H^1(\TL,\C^n)$ such that $\rho_{f^0} > 0,$ system \eqref{IVF}
possesses a unique solution $f \in \mathcal{C}((-T, T), H^1(\TL,\C^n))$ for some
$T>0$, and which satisfies $\rho_{f(\cdot,t)} > 0$ for all $t \in (-T, T).$
Moreover, $f$ can be approximated by  (arbitrarily) smooth solutions of
\eqref{IVF}. If $\liminf_{t \to \pm T} \rho_{f(\cdot, t)} = 0,$ corresponding to
a collision between filaments, the possibility to extend the solution past $\pm
T$ is a delicate question, a situation which we won't consider in this work.    

Regarding the Ginzburg-Landau energy, we write points in $\Omega$ in the form
$(x,z)\in \omega\times \TL$, and define
\[
	e_\ep(u) 
	:=
	\frac 12\left( |\nabla_x\ue|^2 +  \frac 12 {|\partial_z\ue|^2} \right)+ 
	\frac {1} {4\ep^2}(|\ue|^2-1)^2 \ ,
\]
and 
\begin{equation}\label{Gep.def}
G_\ep(u):=\int_\Omega e_\ep(u)\,dx\,dz
- L \kappa(n, \ep, \omega)
\end{equation}
where $\kappa(n, \ep ,\omega) = n\pi \logeps + n(n-1)\pi|\log h_\ep| + O(1)$ is
defined more precisely in \eqref{kappan.def} below. The Cauchy problem for the
Gross-Pitaevskii equation is globally well posed for initial data with finite
Ginzburg-Landau energy (i.e. in $H^1(\Omega)$ here), and solutions can be
approximated by smooth ones too. 

The quantity which will define and locate the vorticity of a solution $u_\ep$ is
the (horizontal\footnote{The other two components of the 3D Jacobian also have
interpretations, see e.g. Proposition \ref{P.improve} below, but they do not
enter in the statement of our main theorem.}) Jacobian  
$$
 Ju_\ep := \nabla_x^\perp \cdot {{\rm Re} (u_\ep \nabla_x \overline u_\ep)}, 
$$
it is therefore a real function of $(x,z,t)$. 

In order to measure the discrepancy between vorticity and an indefinitely thin
filament, we will integrate in $z$ some norms on the slices $\omega \times
\{z\}.$ For 
$\mu \in W^{-1,1}(\omega)$ we let 
\[
\| \mu \|_{W^{-1,1}(\omega)} := \sup \left\{
\int \phi \,d\mu \ : \phi \in W^{1,\infty}_0(\omega), \  
\max\{ \|\phi\|_\infty, \|D\phi\|_\infty\} \le 1 \right\}.
\]
Among the various equivalent norms that induce the $W^{-1,1}(\omega)$ topology,
this choice has the property that there exists $r(\omega)>0$ such that if
$a_1,\ldots, a_n$ and $b_1,\ldots b_n$ are points in $B_r \subset \omega$, then 
\begin{equation}\label{BCL}
\| \sum_{i=1}^n \delta_{a_i} - \sum_{i=1}^n \delta_{b_i}\|_{W^{-1,1}(\omega)} = 
	\min_{\sigma \in S_n} \sum_{i=1}^n |a_i - b_{\sigma(i)}|
\end{equation}
where $S_n$ denotes the group of permutations on $n$ elements, see \cite{BCL}.
Indeed, this property holds whenever $r(\omega) \le \min\{ \frac 12
\mbox{dist}(0,\partial \omega),1 \}$, as then any $1$-Lipschitz function on
$B_r$ that equals zero at the origin can be extended to a function $\phi$ such
that $\phi = 0$ on $\partial \omega$ and $\max\{ \|\phi\|_\infty,
\|D\phi\|_\infty\} \le 1$. 

\smallskip
Finally, we introduce the scale 
$$
h_\ep := \frac{1}{\sqrt{\logeps}}.
$$
It will correspond to the amount of deformation of the filaments with respect to
perfectly straight ones, and is also the typical separation distance between
distinct filaments.  At the same time, the scale $\eps$ corresponds to the
typical core size of the filaments, and therefore since $h_\ep \gg \ep$ as $\eps
\to 0,$ the displacements and mutual distances of filaments are much larger in
this asymptotic regime than their core size.   

\medskip

Our main result is

\begin{theorem}\label{T.main}
Let $f  =  (f_1,\ldots, f_n) \in \mathcal{C}((-T,T),H^1(\TL,\C^n))$ be
solution of the vortex filament system \eqref{IVF} with initial data $f^0$
and such that $\rho_{f(t)} \ge \rho_0>0$ for all $t\in (-T,T)$, 
and $\partial_t f \in L^\infty((-T,T) \times \TL)$. 

For $\ep\in (0,1]$, let  $u_\ep$  solve the  Gross-Pitaevskii equation
\eqref{gp} for initial data such that 
\begin{equation}
	\int_0^L \Big\| 
	J_xu_\ep^0(\cdot,z) - \pi \sum_{j=1}^n \delta_{h_\ep f_j^0(z)}
	\Big\|_{W^{-1,1}(\omega)} dz \  =  \ o(h_\ep)
	\label{mainthm.h1}
\end{equation}
and 
\begin{equation}
	G_\ep(u_\ep^0)\rightarrow G_0(f^0) 
	\label{mainthm.h2}
\end{equation}
as $\ep \to 0$.
Then for every $t\in (-T,T)$,
\begin{equation}
	\int_0^L \Big\| 
	J_xu_\ep(\cdot,z, h_\ep^2 t) - \pi \sum_{j=1}^n \delta_{h_\ep f_j(z,t)}
	\Big\|_{W^{-1,1}(\omega)} dz \  =  \ o(h_\ep),
	\label{mainthm.c1}
\end{equation}
as $\ep \to 0$.
\end{theorem}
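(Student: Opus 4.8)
The plan is to run a modulated-energy / weak-compactness argument that tracks the quantity
\[
\Sigma_\ep(t) := \int_0^L \Big\| J_xu_\ep(\cdot,z, h_\ep^2 t) - \pi \sum_{j=1}^n \delta_{h_\ep f_j(z,t)} \Big\|_{W^{-1,1}(\omega)}\, dz
\]
and shows it stays $o(h_\ep)$ by a Gronwall-type estimate. First I would introduce the natural time rescaling: set $v_\ep(x,z,t) := u_\ep(x,z,h_\ep^2 t)$, so that $v_\ep$ solves $ih_\ep^2\partial_t v_\ep - \Delta v_\ep + \ep^{-2}(|v_\ep|^2-1)v_\ep = 0$; this is the regime in which the horizontal motion of the vorticity is $O(1)$ while the fast internal oscillations are suppressed. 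Next, a second rescaling $x \mapsto h_\ep x$ blows up the $\omega$-slices so that, at scale $h_\ep$, the $n$ filament cross-sections sit at mutual distances of order one and the lateral boundary $\partial\omega$ recedes to infinity — this is the rescaling alluded to in the paper's footnotes, and it is what makes the limiting Hamiltonian $G_0$ (with its $\partial_{zz}$ and Coulomb terms) appear. The core of the argument is then a \emph{quantitative energy-excess} estimate: from the hypothesis $G_\ep(u_\ep^0)\to G_0(f^0)$ together with the conservation of $G_\ep$ under \eqref{gp}, and a $\Gamma$-liminf-type lower bound $\liminf G_\ep(u_\ep(\cdot,t)) \ge G_0(\text{limiting configuration})$ that must hold for \emph{any} admissible limit, one concludes that the energy excess $G_\ep(u_\ep(t)) - G_0(f(t))$ is $o(1)$ uniformly in $t$, which forces the vorticity to remain concentrated on exactly $n$ filaments with no loss of mass or spreading.

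The mechanism converting energy control into the $W^{-1,1}$ estimate \eqref{mainthm.c1} is the key technical engine. I would establish a "product estimate" in the spirit of Jerrard–Soner / Sandier–Serfaty adapted to this cylindrical setting: the time derivative of $\langle J_x u_\ep(\cdot,z,t), \phi\rangle$ against test fields $\phi$ is controlled by the evolution equation for the Jacobian, which to leading order reproduces the right-hand side of \eqref{IVF} plus error terms bounded by (square roots of) the energy excess and quantities like $\int_0^L \|J_x u_\ep - \pi\sum\delta\|_{W^{-1,1}}$ itself. Concretely: (i) derive the PDE satisfied by $J_x u_\ep$ from \eqref{gp} (it involves the other Jacobian components and $\partial_z$ terms — this is where the $\partial_{zz}f_j$ in \eqref{IVF} comes from, via the $z$-derivative energy term with its factor $\tfrac12$); (ii) use the concentration of energy to replace $J_x u_\ep(\cdot,z)$ by $\pi\sum_j \delta_{a_j^\ep(z)}$ up to $o(h_\ep)$ in $W^{-1,1}$, defining filament positions $a_j^\ep(z,t)$ from the centers of mass of the vorticity on each slice; (iii) show the $a_j^\ep$ solve \eqref{IVF} up to errors controlled by the energy excess, using the Coulomb-interaction expansion $-\sum_{i\ne j}\log|a_i^\ep - a_j^\ep|$ that emerges from the renormalized energy on each slice; (iv) compare $a_j^\ep(\cdot,t)$ with $h_\ep f_j(\cdot,t)$ by a Gronwall argument in $W^{-1,1}(\omega)$ integrated over $z$, using \eqref{BCL} to identify the $W^{-1,1}$ distance between the two sums of Diracs with $\min_\sigma \sum |a_j^\ep - h_\ep f_{\sigma(j)}|$ and exploiting the assumed lower bound $\rho_{f(t)}\ge\rho_0$ to prevent the permutation from changing (filaments cannot collide or swap on $(-T,T)$).

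The main obstacle I expect is step (iii)–(iv) carried out \emph{quantitatively} in the delicate regime $\ep \ll h_\ep \to 0$: the filaments are separated by only $h_\ep$, so the pairwise interaction energy between distinct filaments is of order $\log(h_\ep/\ep) = |\log h_\ep| + \tfrac12|\log\ep| \sim \tfrac12\logeps$, which is \emph{comparable} to the self-energy $\pi\logeps$ of each filament — this is exactly why $\kappa(n,\ep,\omega)$ in \eqref{Gep.def} must include the $n(n-1)\pi|\log h_\ep|$ correction, and why the energy bookkeeping has to be done to precision $o(1)$ rather than $o(\logeps)$. Closing the Gronwall loop requires that the error terms be genuinely $o(h_\ep)$ and not merely $O(h_\ep)$, which means one needs a \emph{coercivity} statement: the energy excess must control $\int_0^L\|J_xu_\ep - \pi\sum\delta_{a_j^\ep}\|_{W^{-1,1}}^2$ (or a comparable quantity) with the right power and the right constant, a slice-by-slice lower-bound estimate that is uniform in $z$ and robust under the $z$-integration and the presence of the $\partial_z$ energy. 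Handling the anisotropic energy density $e_\ep$ (the factor $\tfrac12$ on $|\partial_z u_\ep|^2$) correctly throughout, and ensuring the compactness in $z$ is strong enough that the limiting slice-positions assemble into an $H^1(\TL)$ curve solving \eqref{IVF} rather than merely an $L^2$ one, are the remaining points where care is needed; uniqueness of solutions to \eqref{IVF} under $\rho_{f^0}>0$ (quoted in the excerpt) then pins the limit down and upgrades subsequential convergence to full convergence.
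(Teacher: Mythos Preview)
Your broad architecture (rescale, use the Jacobian evolution equation, pass to the limit in the vorticity, close by Gronwall) matches the paper, but step (iii)--(iv) has a real gap, and the paper closes it in a way you have not anticipated.

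The difficulty is this: testing the vorticity equation against smooth $\varphi$ and passing to the limit does \emph{not} yield an approximate ODE for the limiting filament positions with two-sided error $o(1)$. The interaction term comes from integrals like $\int \ep_{ab}\partial_{ac}\varphi\,\partial_b v_\ep\cdot\partial_c v_\ep$ on the annular region outside the cores; there one decomposes $jv_\ep/|v_\ep| = j^* + (\text{remainder})$, and the remainder is controlled only by the energy excess relative to the \emph{actual} limit configuration $f^*$, namely $\Sigma_\ep(t)=G_\ep(u_\ep(t))-G_0(f^*(t))\to I_3(t):=G_0(f(t))-G_0(f^*(t))$. This $I_3$ is nonnegative by $\Gamma$-liminf but is \emph{not} known a priori to vanish for $t>0$ --- that is exactly what must be proved. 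Moreover the quadratic remainder enters with a sign, so one obtains only an \emph{inequality} for the evolution of $\int\varphi(f_k^*(z,t),z,t)\,dz$, with a one-sided error $C\!\int I_3$. Hence you cannot extract ``$f^*$ solves \eqref{IVF} up to errors'' and then run a single Gronwall on $\|f-f^*\|$.

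The paper's resolution is to couple \emph{three} quantities at the limit level: $I_1(t)=\pi\|f-f^*\|_{L^2}^2$, $I_3(t)$ as above, and crucially
\[
I_2(t)=\pi\int_0^L\bigl(-\partial_{zz}f+\nabla\calW(f)\bigr)\cdot(f-f^*)\,dz,
\]
the linearization of $G_0$ at $f$ applied to $f-f^*$. An elementary convexity computation gives $I_3\le I_2+C I_1$ (Lemma~\ref{lem:pmt1}). Then the one-sided inequality above, applied with the two test functions $\varphi=\chi_{\rho_0/4}(|x-f_k|)|x-f_k|^2$ and $\varphi=\chi_{\rho_0/4}(|x-f_k|)\,(-\partial_{zz}f_k+\nabla_k\calW(f))\cdot(f_k-x)$ (both built from the \emph{known} solution $f$, so that $\partial_t\varphi,\partial_z\nabla_x\varphi$ are bounded), yields
\[
I_1(\tau)\le I_1(0)+C\!\int_0^\tau(I_1+I_3),\qquad I_2(\tau)\le I_2(0)+C\!\int_0^\tau(I_1+I_3),
\]
after using \eqref{IVF} for $f$ to cancel the leading terms. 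Gronwall on $I_4:=I_2+C I_1\ge I_3\ge0$ then forces $I_4\equiv0$, hence $I_3\equiv0$, hence $I_1\equiv0$, i.e.\ $f^*=f$. A separate (and nontrivial) compactness-in-time step (Proposition~\ref{prop:4}, Corollary~\ref{cor:1}) is needed first to produce $f^*\in C([0,t_0],L^1)\cap L^\infty([0,t_0],H^1)$ on a short interval along a \emph{common} subsequence; your proposal to work at finite $\ep$ with centers $a_j^\ep(z,t)$ would run into the problem that these are defined only on a good set of $z$'s and have no a priori time regularity.
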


\noindent{\bf Comments.} 
The positivity of $\rho_0$ in Theorem \ref{T.main} is essential, 
it implies that no 
collision between filaments occured over time, and the corresponding 
conclusion would very likely be incorrect without assuming it. Indeed, filaments
collisions in real fluids experiments was observed to lead to highly complex 
reconnection dynamics. The uniform bound assumption on 
$\partial_t f$ may be only technical, and it is verified if e.g. $f^0 \in H^s(\TL)$ for
some $s > 5/2$, in view of the positivity of $\rho_0$ and classical Cauchy theory for
the Schr\"odinger operator in one space dimension. Assumption \eqref{mainthm.h1} 
is responsible for the concentration 
of the initial vorticity of $u_\ep$ around the filaments parametrized by (rescalings)
of $f^0$.  Assumption \eqref{mainthm.h2} can be
understood as requiring that the former concentration holds in the most energy
efficient way (at least asymptotically as $\ep \to 0$); this follows from results in
\cite{CJ}, building on earlier work of \cite{DPKo}. Below we will recall these results
in detail and refine some of them.
The conclusion
\eqref{mainthm.c1} implies that the concentration of vorticity is preserved in time,
and its location follows (after appropriate rescalings) the model of Klein Majda
and Damodoran.

\smallskip
In the context of the 3D Gross-Pitaevskii equation, there are very few available
mathematical results which rigorously derive a motion law for vortex filaments.
Besides Theorem \ref{T.main}, the only one we are aware of which does not
require a symmetry assumption reducing the actual problem to 2D is
\cite{filaments},
where the case of a single vortex ring was treated (the limiting filament is
symmetric but the field $u_\ep$ is not assumed to be so).  The situation is
slightly better understood in the axisymmetric setting, in particular the case of a
finite number of vortex rings was analized in \cite{JeSm4}, where the so-called
leapfrogging phenomenon was established. In 2D the situation is of course 
brighter, and since vortex filaments are for the most part tensored versions of
2D vortex points, it is not surprising that the analysis of the latter is at 
the basis of all the 3D  works we were referring to so far. 

Vortex points and approximations of in 2D evolve according to the so-called point 
vortex system. That was established in \cite{JeCo} in the context of the 
Gross-Pitaevskii equation, but parallel results were also obtained (and actually
earlier) in the framework of the incompressible 2D Euler equation \cite{MaPu1,
MaPu2}.

The analogy between Euler and Gross-Pitaevskii equations is expected to be 
valid not only in 2D, and as stated at the beginning of this introduction a 
common open challenge in both frameworks is to rigorously derive the binormal 
curvature flow equation for general vortex filament shapes.
In this context, we emphasize the $n=1$ case of Theorem \ref{T.main} 
establishes a linearized version of this so-called self-induction approximation for \eqref{gp};
the general case  of the theorem
describes evolution governed by a combination of the linearized self-induction
of filaments and interaction with other filaments.

Contrary to the Euler equation, the Gross-Pitaevskii equation has a fixed 
``core length" $\ep$ in its very definition: this simplifies some of the analysis 
and may explain
why in particular the equivalent of the nonlinear 3D stability for one vortex ring
or the leapfrogging phenomenon have not yet been proved in that context. On the
other hand, there is no equivalent of the Biot-Savart law in the context of the
Gross-Pitaveskii equation, the field is complex and the analysis often
involves tricky controls of the phases. Partial results in the context of Euler
in 3D include \cite{GaSm1, GaSm2} for the 3D spectral stability of a columnar 
vortex, and \cite{BuMa} for the evolution of a finite number of axisymmetric 
vortex rings in a regime where they do not interact.

Theorem \ref{T.main} does not cover the case of anti-parallel vortex filaments,
a situation which in \eqref{KMD} would correspond to constants $\Gamma_j \in \pm
1$ that do not all share the same sign. This is something that wish to consider
in the future.

\bigskip

In the remaining subsections of this introduction, after fixing a number of
notations which we use throughout, we describe in details the strategy followed
to prove Theorem \ref{T.main} and we state the key intermediate lemmas and
propositions. The proofs of the latter are presented latter in Section
\ref{sec:dynamics}, for the key arguments related to the dynamics, in Section
\ref{App}, for the results which do not depend on a time variable and which are
for the most part extensions or variations of results in \cite{CJ}, and in
Section \ref{sec:compact}, for those related to a priori compactness in time. 

\subsection{Further notation}\label{sec:furnot}

In addition to the scale $h_\ep := \logeps^{-1/2}$, we will always write 
$\omega_\ep := h_\ep^{-1}\omega$ and $\Omega_\ep := \omega_\ep \times \TL$ 
to denote the rescaled versions of $\omega$ and $\Omega$
respectively. Given  
$u_\ep \in H^1(\Omega,\C)$ we will always let $v_\ep$ denote the function
in $H^1(\Omega_\ep,\C)$ defined by
$$
	v_\ep(x,z) = u_\ep(h_\ep x, z), \qquad (x,z)\in \Omega_\ep.
$$ 
We will write 
\[
	jv_\ep := iv_\ep \cdot \nabla_x v_\ep ,
\]
where here and throughout, a dot product of complex numbers denotes the {\em
real} inner product: 
\[
	\mbox{ for }v,w\in \C, \qquad v\cdot w = \mbox{Re}( v\bar w) .
\]
Observe once more that $j v_\ep$ contains only the {\em horizontal} components
of the momentum vector $iv_\ep \cdot D v_\ep = (iv_\ep \cdot \nabla_x
v_\ep,iv_\ep \cdot \partial_z v_\ep)$.

In many places, we implicitly identify $\C^n$ with $(\R^2)^n$ when no complex
products are involded. We fix $\chi \in C^\infty(\R)$ to be a nonnegative
nonincreasing function such that 
\[
	\chi(s) = 1\mbox{ if }s<1, \qquad \chi(s)=0\mbox{ if }s\ge 2, 
\]
and for arbitrary $r>0$ we set $\chi_r(s):= \chi(s/r).$ For $f\in H^1((0,L),
(\R^2)^n)$ such that $\rho_f>0$, and for $0<r<\rho_f/4,$ we also set
\[
	\begin{aligned}
	\chi_{r}^f (x,z) &:= \sum_{i=1}^n \chi_r( |x-f_i(z)| )  \, |x-f_i(z)|^2. \\
	\chi_{r,\ep}^f (x,z) &:= \frac 1{h_\ep^2} \chi^{h_\ep f}_{h_\ep r}(x,z)
	= \sum_{i=1}^n \chi_r(\frac {|x - h_\ep f_i(z)|}{h_\ep} )
		\, \left|\frac {x -h_\ep f_i(z)}{h_\ep}\right|^2.
	\end{aligned}
\]

Repeated indices $a,b,c,\ldots$ are implicitly summed from $1$ to $2$; these
correspond to the horizontal $x$ variables. We will also write $\ep_{ab}$ to
denote the usual antisymmetric symbol, with components
\[
\ep_{12}=-\ep_{21} = 1, \qquad \ep_{11} = \ep_{22}=0.
\]
For $v = (v_1,v_2)\in \R^2$, we will write $v^\perp := (-v_2,v_1)$. Thus
$(v^\perp)_b = \ep_{ab}v_a$. We will similarly write $\nabla_x^\perp :=
(-\partial_y,\partial_x)$. In the same spirit,  
\[
	v^\perp := (v_1^\perp,\ldots,v_n^\perp)
	\quad\mbox { for }v = (v_1,\ldots, v_n)\in (\R^2)^n,
\]
with a similar convention for $\nabla^\perp W$, for $W:(\R^2)^n\to \R$.

If $\mu_z$ is a family of signed measures on an open set   $U\subset \R^2$,
depending (measurably) on a parameter $z\in (0,L)$, then $\mu_z\otimes dz$
denotes the measure on $U\times (0,L)$ defined by 
\[
	\int_{U\times (0,L)} f d\mu_z\otimes dz 
	= \int_0^L (\int_U f(x,z) d\mu_z(x)) dz. 
\]

For a smooth bounded  $A\subset \R^2$ (typically $\omega$ or $\omega_\ep$) and
$a\in A^n$ we will write
\[
	j^*_A(x;a) :=  -\nabla^\perp_x \psi_{A}^*,
\]
where $\psi_{A}^* = \psi_A^*(x;a) $ solves
\[
	\begin{cases}
	-\Delta_x\psi_{A}(\cdot; a)^* &= 2\pi \sum_{i=1}^n \delta_{a}
	\quad \mbox{ in } A\\
	\hspace{2em}\psi_A^*&=0\hspace{5.3em}\mbox{ on }\partial A\ .
	\end{cases}
\]
Equivalently, $j^*_A(x;a): A\to \R^2$ is the unique solution of
\begin{align*}
	\nabla_x \cdot j^*_A  =0, \qquad
	\nabla_x^\perp \cdot j^*_A = 2\pi \sum_{i=1}^n \delta_{a_i} ,\qquad
	j^*_A(\cdot, a)\cdot \nu = 0\mbox{ on }\partial A
\end{align*}
where $\nu$ denotes the  outer unit normal to $A$. It is straightforward to
check that 
\[
	j^*_{\omega_\ep}(x; a) = h_\ep j^*_\omega(h_\ep x; h_\ep a) 
\]
and that 
\[
	\lim_{\ep\to 0} j^*_{\omega_\ep}(x; a) 
	= \sum_{i=1}^n \frac{(x-a_i)^\perp}{|x-a_i|^2} =: j^*_{\R^2}(x;a).
\]
Given $g: (0,L)\to A^n$, we will write $j^*_A(g)$ to denote the function
$A\times (0,L)\to \R^2$ defined by
\[
	j^*_A(g)(x,z) = j^*_A(x; g(z)).
\]

We define a couple of other auxiliary functions related to $\psi_A$. First, note
that 
\[
	\psi_A(x;a) = - \sum_{i=1}^n\left( \log|x-a_i| + H_A(x, a_i)\right)
\]
where for $a_i\in \Omega$, we define $H_A(\cdot ,a_i)$ to be the solution of
\[
	-\Delta_x H(x, a_i) = 0\mbox{ for }x\in A, \qquad H_A(x,a_i) 
	= -\log|x-a_i|\mbox{ for }x\in \partial A.
\]
We define
\[
	W_A(a) = -\pi \Big(\sum_{i\ne j} \log|a_i-a_j| 
	+ \sum_{i,j} H_A(a_i,a_j)\Big).
\]
The constant $\kappa(n,\ep,\omega)$ appearing in \eqref{Gep.def} is defined by
\begin{equation}\label{kappan.def}
	\kappa_n(\omega) =  n(\pi \logeps +\gamma) 
	+ n(n-1)\pi |\log h_\ep| - \pi n^2 H_\omega(0,0) 
\end{equation}
where $\gamma$ is a universal constant\footnote{We will not need the exact
definition of $\kappa_n$ or $\gamma$ in this paper, but these constants will
appear in various formulas.} introduced in the pioneering work of B\'ethuel,
Brezis and H\'elein \cite{BBH}, see Lemma IX.1.

\subsection{Variational aspects of nearly parallel vortex
filaments}

In this section we first collect some information about the behaviour of nearly
parallel vortex filaments under energy and localisation constraints, but without
introducing any time dependence.  Most of these results are contained in
Contreras and J. \cite{CJ}, or can be obtained by adapting and combining results
in \cite{CJ}. The necessary details are given in Section \ref{App}. 

Our first result follows directly from arguments in \cite{CJ}, although it does
not appear there in exactly this form. 

\begin{proposition}\label{P:CJ}
Assume that $(u_\ep)\subset H^1(\Omega,\C)$ is a sequence satisfying 
\begin{align}
	\int_0^L \| J_x u_\ep(\cdot ,z) 
	- n\pi\delta_0\|_{W^{-1,1}(\omega)}dz &\le c_1 h_\ep,
	\label{CJ.h1}\\
	G_\ep(u_\ep) &\le c_2.
	\label{CJ.h2}
\end{align}
Then 
\begin{equation}\label{dzu2}
	\int_\Omega  |\partial_z u_\ep|^2 dx\;dz \leq C(c_1,c_2)
\end{equation}
and there exists some $f = (f_1,\ldots, f_n)\in H^1(\TL, \C^n)$ such that after
	passing to a subsequence if necessary: 
\begin{equation}\label{comp}
	\int_0^L \| J_x u_\ep(\cdot ,z) 
	- \pi\sum_{j=1}^n \delta_{h_\ep f_j(z)}\|_{W^{-1,1}(\omega)} dz = o(h_\ep)
	\qquad
	\mbox{ as }\ep\to 0.
\end{equation}
Finally, $f$ satisfies
\begin{equation}\label{fbounds}
	G_0(f)\le  \liminf_{\ep\to 0}G_\ep(u_\ep) , 
	\qquad\qquad \| f\|_{H^1} \le C(c_1,c_2), 
\end{equation}
where the lim inf refers to the subsequence for which
	\eqref{comp} holds.
\end{proposition}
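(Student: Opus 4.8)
The plan is to establish Proposition~\ref{P:CJ} by extracting, from the two hypotheses \eqref{CJ.h1}--\eqref{CJ.h2}, first a uniform control on the vertical derivative, and then slice-by-slice compactness combined with a lower semicontinuity (Gamma-liminf) argument. The starting point is the observation that $G_\ep(u_\ep)$ is, up to the $z$-dependent renormalization $L\kappa(n,\ep,\omega)$, an integral over $z \in \TL$ of the rescaled two-dimensional Ginzburg-Landau energy on the slices $\omega\times\{z\}$, plus the extra term $\frac14\int |\partial_z u_\ep|^2$. The two-dimensional results from \cite{CJ} (to be recalled in detail in Section~\ref{App}) give, for each slice with the localization bound \eqref{CJ.h1} in an integrated sense, a lower bound of the form $\frac12\int_\omega(|\nabla_x u_\ep|^2 + \frac1{2\ep^2}(|u_\ep|^2-1)^2) \geq \kappa_n(\omega)/L + (\text{something nonnegative and coercive in the defect})$. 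Integrating this over $z$ and comparing with \eqref{CJ.h2} forces $\int_\Omega |\partial_z u_\ep|^2 \leq C(c_1,c_2)$, which is \eqref{dzu2}; the same comparison also bounds the slice-wise excess energy, which is what controls the horizontal structure.

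Next I would upgrade the integrated localization bound \eqref{CJ.h1} to the sharper conclusion \eqref{comp}. Here the idea is that the $z$-averaged bound $\int_0^L \|J_xu_\ep(\cdot,z)-n\pi\delta_0\|_{W^{-1,1}} \leq c_1 h_\ep$ together with the now-established $H^1$-in-$z$ bound lets me apply, on most slices, the two-dimensional compactness/structure theory: for a sequence of configurations whose $W^{-1,1}$ distance to $n\pi\delta_0$ is $\lesssim h_\ep$ and whose excess energy is bounded, the vorticity $J_xu_\ep(\cdot,z)$ is $o(h_\ep)$-close (in the rescaled variable, $O(1)$-close after dividing by $h_\ep$) to a sum of $n$ unit Diracs $\pi\sum_j \delta_{h_\ep f_j(z)}$ with $f_j(z)$ in a bounded region of $\R^2$. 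The functions $z\mapsto f_j(z)$ obtained this way inherit an $H^1(\TL)$ bound from \eqref{dzu2} (the vertical energy controls how fast the vortex locations move in $z$), which after passing to a subsequence gives $f\in H^1(\TL,\C^n)$; a Fubini/dominated-convergence argument then promotes the slice-wise $o(h_\ep)$ estimates to the integrated statement \eqref{comp}. Some care is needed because the vortex labels could in principle permute from slice to slice, but the continuity of $J_xu_\ep$ in $z$ (quantified by the $\partial_z$ bound) rules this out on the bulk of $[0,L]$.

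Finally, for the energy lower bound $G_0(f)\leq\liminf G_\ep(u_\ep)$ in \eqref{fbounds}, I would combine two contributions. The horizontal part is handled by the renormalized-energy lower bound from \cite{CJ}: on each slice, $\frac12\int_\omega |\nabla_x u_\ep|^2 + \frac1{4\ep^2}(|u_\ep|^2-1)^2 - \kappa_n(\omega)/L \geq W_\omega(h_\ep f(z)) + o(1)$, and since $W_\omega(h_\ep a) = -\pi\sum_{i\neq j}\log|h_\ep a_i - h_\ep a_j| + O(1) = -\pi\sum_{i\neq j}\log|a_i-a_j| + n(n-1)\pi|\log h_\ep| + O(1)$, the $|\log h_\ep|$ term is exactly absorbed by the corresponding term in $\kappa(n,\ep,\omega)$, leaving $-\pi\int_0^L\sum_{i\neq j}\log|f_i(z)-f_j(z)|\,dz$ plus vanishing errors. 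The vertical part is handled by weak lower semicontinuity of $u\mapsto \frac14\int|\partial_z u|^2$: here one needs that $\nabla_x u_\ep$, suitably interpreted through $j v_\ep$ or the phase, converges in a sense strong enough that $\partial_z$ of the phase near the filaments reproduces $\frac{\pi}{2}\int_0^L\sum_j |f_j'(z)|^2\,dz$ in the limit. Then Fatou applied in $z$ assembles the pieces into $G_0(f)\leq\liminf G_\ep(u_\ep)$, and the $H^1$ bound on $f$ follows from the coercivity of $G_0$ given $\rho_f>0$ (which itself follows from \eqref{comp} and \eqref{CJ.h1}). The main obstacle I expect is the vertical term: extracting the clean $\frac\pi2|f_j'|^2$ density requires controlling the full momentum $iv_\ep\cdot Dv_\ep$ (not just its horizontal part $jv_\ep$) near the filament cores and matching it to the motion of the limiting curves $f_j$, which is the genuinely three-dimensional ingredient not directly available from the two-dimensional theory of \cite{CJ} and is the technical heart of Section~\ref{App}.
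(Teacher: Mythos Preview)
Your overall strategy matches the paper's: most of the content is cited from \cite{CJ}, and the two points requiring additional work are the upgraded compactness \eqref{comp} and the explicit $H^1$ bound on $f$. Your sketch of \eqref{dzu2} and of \eqref{comp} (good/bad slice decomposition, pointwise $o(h_\ep)$ convergence on good slices, dominated convergence) is essentially what the paper does.

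There is, however, a genuine gap in your argument for $\|f\|_{H^1}\le C(c_1,c_2)$. You write that it ``follows from the coercivity of $G_0$ given $\rho_f>0$ (which itself follows from \eqref{comp} and \eqref{CJ.h1})''. But $\rho_f>0$ does \emph{not} follow from these hypotheses: nothing in \eqref{CJ.h1}, \eqref{CJ.h2} or \eqref{comp} prevents two limiting filaments from meeting at some $z_0$, and $G_0(f)<\infty$ is perfectly compatible with $f_i(z_0)=f_j(z_0)$ at isolated points (the singularity of $-\log|f_i-f_j|$ is integrable there). Even if one could somehow establish $\rho_f>0$, its value would depend on the particular subsequence and the limit $f$, not only on $c_1,c_2$, so the resulting bound would not be the uniform one asserted in \eqref{fbounds}.

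The paper's actual argument avoids any positivity of $\rho_f$. First, \eqref{BCL} and \eqref{comp} give, for a.e.\ $z$, $\sum_i|f_i(z)| = \lim_{\ep\to 0}(\pi h_\ep)^{-1}\|J_xu_\ep(\cdot,z)-n\pi\delta_0\|_{W^{-1,1}(\omega)}$, whence Fatou and \eqref{CJ.h1} yield $\|f\|_{L^1}\le C(c_1)$. The key step is then to bound the interaction term from \emph{above}, not below: by Jensen's inequality (concavity of $\log$),
\[
\int_0^L \log|f_i-f_j|\,dz \ \le\ L\log\Big(\tfrac1L\int_0^L|f_i-f_j|\,dz\Big)\ \le\ C(c_1),
\]
so that $\tfrac\pi2\|f'\|_{L^2}^2 = G_0(f) + \pi\sum_{i\ne j}\int_0^L\log|f_i-f_j| \le c_2 + C(c_1)$. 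Interpolation between $\|f\|_{L^1}$ and $\|f'\|_{L^2}$ then controls $\|f\|_{L^2}$. As a minor remark, the $\Gamma$-liminf $G_0(f)\le\liminf G_\ep(u_\ep)$, including the vertical contribution, is already contained in \cite{CJ} (Theorem~3 there) and is simply quoted; the sharper statements in Proposition~\ref{P.improve} are not needed for Proposition~\ref{P:CJ}, so the vertical term is not the obstacle you anticipate here.
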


The arguments needed to extract Proposition \ref{P:CJ} from facts established in
\cite{CJ} are presented in Section \ref{subsec:pp1}. Next we describe
weak limits of products of derivatives of $v_\ep$.

\begin{proposition}\label{P.improve}
Assume that $(u_\ep)\subset H^1(\Omega,\C)$ satisfies \eqref{CJ.h2} and
	\eqref{comp} (and hence \eqref{CJ.h1}), and let $v_\ep(x,z) = u_\ep(h_\ep x,
	z)$. Then the following hold, in the weak sense of measures on $\Omega$ 
\begin{align}
	\frac 1 \logeps\partial_{x_k} v_\ep  \cdot\partial_{x_l} v_\ep 
	&\rightharpoonup \pi  \delta_{kl} 
	\sum_{i=1}^n \delta_{f_i(z)} \otimes dz,
	\label{improve1}\\
	\frac 1 \logeps \nabla_x v_\ep \cdot \partial_z v_\ep  
	&\rightharpoonup  
	-\pi  \sum_{i=1}^n \partial_z f_i (z)\delta_{f_i(z)} \otimes dz
	\label{improve2},
\end{align}
for all $k,l$ in $\{1, 2\}.$ Moreover, for any nonnegative $\phi \in
	C_c(\R^2\times \TL)$,
\begin{equation}\label{local.dz2.bound}
	\liminf_{\ep\to 0} 
	\int_{\Omega_\ep} \phi \frac{|\partial_z v_\ep|^2}\logeps \, dx\,dz \ge
	\pi \sum_{i=1}^n\int_0^L |f_i'(z)|^2 \phi(f_i(z), z) \, dz.
\end{equation}
\end{proposition}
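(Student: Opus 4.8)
\medskip
\noindent\textbf{Strategy of proof.}
Since the statement involves no time variable, only the energy bound \eqref{CJ.h2} and the concentration \eqref{comp} are available, and the plan is to reduce everything to sharp two-dimensional Ginzburg--Landau asymptotics on the slices $\omega_\ep\times\{z\}$, integrated in $z$. First I would rescale and record the consequences of the hypotheses. Two-dimensional conformal invariance of the horizontal Dirichlet energy gives $\frac12\int_\Omega|\nabla_x\ue|^2 = \frac12\int_{\Omega_\ep}|\nabla_x\ve|^2$; since $e_\ep(\ue)\ge\frac12|\nabla_x\ue|^2$, inserting \eqref{CJ.h2} into the definition \eqref{Gep.def}, i.e.\ $\int_\Omega e_\ep(\ue) = G_\ep(\ue)+L\kappa_n(\omega)$, and using $\kappa_n(\omega) = n\pi\logeps + O(\log\logeps)$ (see \eqref{kappan.def}; recall $|\log h_\ep| = \frac12\log\logeps$) yields the \emph{sharp} upper bound
\[
	\tfrac12\int_{\Omega_\ep}|\nabla_x\ve|^2\,dx\,dz \;\le\; nL\pi\logeps + o(\logeps),
\]
while \eqref{dzu2} rescales to $\int_{\Omega_\ep}|\partial_z\ve|^2\,dx\,dz \le C\logeps$ (as $h_\ep^{-2} = \logeps$). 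Using $J_x\ve(x,z) = h_\ep^2(J_x\ue)(h_\ep x,z)$ and the behaviour of the chosen $W^{-1,1}$ norm under dilation, \eqref{comp} becomes $\int_0^L\|J_x\ve(\cdot,z) - \pi\sum_j\delta_{f_j(z)}\|_{W^{-1,1}(\omega_\ep)}\,dz = o(1)$. Since $\rho_f\ge\rho_0>0$ and $f$ is continuous, the graphs $\Gamma_i := \{(f_i(z),z):z\in\TL\}$ are pairwise separated; fixing $0<r<\rho_0/4$ and cutoffs $\eta_i\in C_c^\infty(\R^2\times\TL)$ equal to $1$ on the $r$-tube about $\Gamma_i$ and supported in the $2r$-tube, with disjoint supports, it suffices to prove each assertion against test functions supported in one such tube, together with the ``clearing-out'' statement that $\logeps^{-1}|\nabla_x\ve|^2\,dx\,dz$ carries no mass outside $\bigcup_i\{|x-f_i(z)|<r\}$, for every $r>0$.

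The technical heart, which I would extract by adapting \cite{CJ}, is a local \emph{structure} statement: for a.e.\ $z$ and on the $r$-tube about $\Gamma_i$, the horizontal current $j\ve = i\ve\cdot\nabla_x\ve$ satisfies
\[
	j\ve(x,z) \;=\; \frac{(x-f_i(z))^\perp}{|x-f_i(z)|^2} \;+\; r_\ep(x,z),
	\qquad
	\int_0^L\!\!\int_{|x-f_i(z)|<r}|r_\ep|^2\,dx\,dz \;=\; o(\logeps).
\]
This comes from the usual route: matching the sharp bound above, slice by slice, against the two-dimensional ``vortex-ball'' lower bounds, with the rigidity of the degree-one vortex (the orthogonality $\int\nabla_x\arg(x-f_i(z))\cdot\nabla_x\psi = 0$ over centred annuli) upgrading ``energy within $o(\logeps)$ of minimal'' to ``$r_\ep$ small in $L^2$'' --- the only new feature being that these slice statements hold for a.e.\ $z$ and must be integrated in $z$, which is where \eqref{dzu2} enters. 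Granting this: because $\int_0^L\int_{|x-f_i(z)|<r}|x-f_i(z)|^{-2}\,dx\,dz = O(\logeps)$, the remainder $r_\ep$ contributes only $o(\logeps)$ to every bilinear expression by Cauchy--Schwarz; and since $\int_{\Omega_\ep}\bigl|\nabla_x|\ve|\bigr|^2 = O(1) = o(\logeps)$ (the logarithmically divergent energy is carried by the phase alone) with $|\ve|\to1$ away from the vortex cores and the core regions contributing only $O(1)$, one may replace $\partial_{x_k}\ve\cdot\partial_{x_l}\ve$ by the product of the canonical fields. The computation is then explicit: writing $K^\xi(x):=(x-\xi)^\perp/|x-\xi|^2$ and $n=(\cos\alpha,\sin\alpha)$, one has $\int_0^{2\pi}(n^\perp)_k(n^\perp)_l\,d\alpha = \pi\delta_{kl}$, so that for $\phi\in C_c(\R^2)$
\[
	\int_{\eps/h_\ep<|x-\xi|<r}\!\!\!\! \phi\; K^\xi_k\,K^\xi_l \,dx \;=\; \pi\delta_{kl}\,\phi(\xi)\logeps + o(\logeps);
\]
setting $\xi=f_i(z)$, summing over $i$ and integrating in $z$ gives \eqref{improve1}, the clearing-out excluding mass elsewhere.

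For \eqref{improve2}, recenter near $\Gamma_i$: with $w_\ep(y,z):=\ve(y+f_i(z),z)$ the chain rule gives $\partial_z\ve(x,z) = -(\partial_z f_i(z))_m\,\partial_{y_m}w_\ep(x-f_i(z),z) + \partial_z w_\ep(x-f_i(z),z)$, and since $\partial_{y_k}w_\ep(x-f_i(z),z) = \partial_{x_k}\ve(x,z)$,
\[
	\frac{\partial_{x_k}\ve\cdot\partial_z\ve}{\logeps} \;=\; -\,(\partial_z f_i)_m\,\frac{\partial_{x_k}\ve\cdot\partial_{x_m}\ve}{\logeps} \;+\; \frac{\partial_{y_k}w_\ep\cdot\partial_z w_\ep}{\logeps}
\]
on the $r$-tube about $\Gamma_i$, where $w_\ep$ and its derivatives are evaluated at $(x-f_i(z),z)$. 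By \eqref{improve1} the first term on the right has weak limit $-\pi(\partial_z f_i)_k\,\delta_{f_i(z)}\otimes dz$ (summed over $i$, and approximating $\partial_z f_i\in L^2$ by continuous functions if needed), so \eqref{improve2} reduces to $\logeps^{-1}\partial_{y_k}w_\ep\cdot\partial_z w_\ep\rightharpoonup 0$. This is the crux: $\partial_z w_\ep$ (the $z$-derivative of the recentered profile) is only $O(\logeps^{1/2})$ in $L^2$ --- there is no sharp upper bound for $\int|\partial_z\ve|^2$, which is also why \eqref{local.dz2.bound} is merely an inequality --- so Cauchy--Schwarz against $\partial_{y_k}w_\ep$ is insufficient; one must exploit the cancellation coming from $\partial_{y_k}w_\ep$ being, to leading order, the gradient of the \emph{odd} canonical vortex $y\mapsto(y^\perp)_k/|y|^2$, which kills the logarithmic part of the pairing, so the product tends to $0$ weakly. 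Finally \eqref{local.dz2.bound} follows by completing the square: for nonnegative $\phi\in C_c(\R^2\times\TL)$ and bounded $g_i:\TL\to\R^2$, expanding $0\le\bigl|\partial_z\ve + (g_i)_m\partial_{x_m}\ve\bigr|^2$, multiplying by $\phi\,\eta_i\,\logeps^{-1}$, integrating and passing to the limit with \eqref{improve1}--\eqref{improve2} yields
\[
	\liminf_{\ep\to0}\int\phi\,\eta_i\,\frac{|\partial_z\ve|^2}{\logeps}\,dx\,dz \;\ge\; \pi\int_0^L\phi(f_i(z),z)\bigl(2\,g_i(z)\!\cdot\! f_i'(z) - |g_i(z)|^2\bigr)\,dz;
\]
letting $g_i\to f_i'$ in $L^2$ makes the right-hand side $\pi\int_0^L\phi(f_i(z),z)|f_i'(z)|^2\,dz$, and summing over $i$ (with $\sum_i\eta_i\le1$ near the graphs on $\spt\phi$) gives the claim.

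The two steps I expect to be the main obstacles are: (a) extracting the quantitative, $z$-integrated structure statement of the second paragraph by adapting \cite{CJ} --- combining the precise value of $\kappa_n$ (so the global upper bound is sharp at order $\logeps$), the two-dimensional vortex-ball lower bounds with their rigidity, and the merely non-sharp bound $\int_\Omega|\partial_z\ue|^2<\infty$, all while carrying the $z$ variable so that a.e.-slice statements can be integrated; and (b) the weak convergence $\logeps^{-1}\partial_{y_k}w_\ep\cdot\partial_z w_\ep\rightharpoonup0$ feeding \eqref{improve2}, which requires finer cancellation than Cauchy--Schwarz supplies. The remaining reductions and the completing-the-square argument are by contrast routine.
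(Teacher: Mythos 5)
Your plan for \eqref{improve1} and the slice-by-slice structure statement is essentially the right idea (the paper obtains the slice-wise limits by citing Theorem~5 of \cite{filaments} or Corollary~4 of \cite{SandSerf-product}, then integrates in $z$ via a dominated convergence argument over the good/bad set decomposition of $(0,L)$). But the order in which you chain the three conclusions is reversed from the paper's, and this matters: you try to prove \eqref{improve2} directly and then deduce \eqref{local.dz2.bound} by completing the square, whereas the paper proves \eqref{local.dz2.bound} \emph{first} and derives \eqref{improve2} from it. Your route hinges on the claim $\logeps^{-1}\partial_{y_k}w_\ep\cdot\partial_z w_\ep\rightharpoonup 0$, which you yourself flag as a main obstacle and for which you supply only the heuristic that ``oddness kills the logarithmic part of the pairing.'' That heuristic is not a proof: after recentering, $\partial_z w_\ep$ is still only controlled as $O(\logeps^{1/2})$ in $L^2$, and there is no obvious parity or orthogonality structure in $\partial_z w_\ep$ that pairs off against the near-canonical $\partial_{y_k}w_\ep$; indeed, as written, the decomposition is just a change of variables and the residual claim is \emph{equivalent} to \eqref{improve2} itself, so nothing has been gained. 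A further technical wrinkle is that $\partial_z f_i$ is only $L^2$, so multiplying the weakly convergent measures of \eqref{improve1} by $(\partial_z f_i)_m$ requires justification beyond passing to continuous approximants; in the paper this issue simply does not arise.

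The paper's proof of \eqref{improve2} avoids all of this via a translation trick that is worth internalizing: first establish \eqref{local.dz2.bound} directly from known 2D Ginzburg--Landau lower bounds for the $z$-derivative energy along vortex trajectories (Proposition~3 of \cite{wave}, Corollary~7 of \cite{SandSerf-product}), restricted to $\mathcal{I}_\delta$ where the trajectories are separated; then, for an arbitrary test function $g\in C^1$, apply \eqref{local.dz2.bound} to the translated field $\tilde v_\ep(x,z) = v_\ep(x-g(z),z)$ (whose vortex positions are $f_i+g$ and whose rescaled energy is still bounded), change variables, expand $|\partial_z\tilde v_\ep|^2 = |\partial_z v_\ep|^2 - 2g'\cdot\nabla_x v_\ep\cdot\partial_z v_\ep + |g'\cdot\nabla_x v_\ep|^2$, and invoke \eqref{improve1} for the quadratic-in-$g$ term. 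Subtracting, one obtains an inequality linear in $g$ for the unknown weak limit $\lambda$ of $\logeps^{-1}\nabla_x v_\ep\cdot\partial_z v_\ep$; replacing $g$ by $cg$ for arbitrary $c\in\R$ forces equality, identifying $\lambda$. This is in effect your completing-the-square computation run in reverse, but with the lower bound \eqref{local.dz2.bound} supplying the missing information rather than the unproved cancellation. You would do well to (i) locate the direct proof of \eqref{local.dz2.bound} in the literature rather than deferring it, and (ii) replace the cancellation heuristic with the translation argument.
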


The proof of Proposition \ref{P.improve} is given in Section \ref{subsec:pp2}.
Briefly, \eqref{improve1} and \eqref{local.dz2.bound} are deduced by combining
results from \cite{CJ} with facts established in \cite{wave, filaments, SandSerf-product},
and \eqref{improve2} is obtained via a short argument whose starting point
is \eqref{improve1} and \eqref{local.dz2.bound}.

Finally we will need a refinement of a $\Gamma$-limit lower bound from \cite{CJ}.
The proof is given in Section \ref{subsec:pp3}.

\begin{proposition}\label{prop:3}
Let $r>0$ and $f\in H^1((0,L), \C^n)$ be given such that  $r<\rho_f/4.$ Then
	given $\delta>0$, there exist $c_3, \ep_3>0$, depending only on
	$\|f\|_{H^1}$ and $r$, such that for all $\Sigma\in (0,1]$ and any $\ep\leq
	\ep_3$, if $u_\ep\in H^1(\Omega,\C)$ and 
\begin{equation}\label{lessthanc2}
	\int_0^L \| Ju_\ep(\cdot, z) 
	- \pi \sum_{i=1}^n \delta_{h_\ep f_i(z)}\|_{W^{-1,1}(\omega)} dz
	\le c_3 h_\ep ,
\end{equation}
\begin{equation}
	\label{eq:nakaayi}
	G_\ep(u_\ep) - G_0(f) \le \Sigma , 
\end{equation}
then
\begin{equation}\label{eq:galibier}
	\int_0^L \int_{\omega \setminus \cup_{i=1}^n B(h_\ep f_i(x), h_\ep r)}
	e_\ep(|u_\ep|) + \frac14 \left|\frac  {ju_\ep}{|u_\ep|}  
	- j^*_\omega(h_\ep f)\right|^2 \le K_3\Sigma  +\delta,
\end{equation}
where $K_3$ depends only on $r , n$, and $\|f\|_{H^1}$. Moreover, if
\begin{equation}\label{eq:izoard}
	T^f_{r,\ep}(u_\ep) := 
	\int_\Omega J_x u_\ep(x,z) \chi^f_{r,\ep} \,dx\,dz 
	\le \frac{c_3^2}{4n\pi L}
\end{equation}
then
\begin{equation}\label{eq:lautaret}
	\frac 1 {h_\ep}\int_0^L \|J_x u_\ep(\cdot,z) 
	- \pi\sum_{i=1}^n  \delta_{h_\ep f_i(z)}\|_{W^{-1,1}(\omega)} dz 
	\le \left(n\pi L T^f_{r,\ep}(u_\ep)\right)^\frac12  + o(1) \le \frac 12 c_3.
\end{equation}
\end{proposition}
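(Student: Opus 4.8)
The plan is to derive Proposition~\ref{prop:3} by combining a quantitative version of the $\Gamma$-limit lower bound from \cite{CJ} with an elementary comparison argument that turns an excess-energy bound into a $W^{-1,1}$ proximity estimate. For \eqref{eq:galibier}, I would first recall from \cite{CJ} (or reprove in Section~\ref{App}, which the text announces) that under the localisation hypothesis \eqref{lessthanc2} and for $\ep$ small enough, the slicewise Jacobians $J_xu_\ep(\cdot,z)$ are genuinely close to $\pi\sum_i\delta_{h_\ep f_i(z)}$ in a strong enough sense that one can deploy a product-estimate/clearing-out machinery on each slice. The key point is that the full energy $G_\ep(u_\ep)$ splits, up to $o(1)$ errors, as
\[
	G_\ep(u_\ep) = \int_0^L\!\!\int_{\omega}\Big(e_\ep(|u_\ep|) + \tfrac14\big|\tfrac{ju_\ep}{|u_\ep|}-j^*_\omega(h_\ep f)\big|^2\Big)\,dx\,dz \;+\; \mathcal{E}_{\mathrm{near}} \;+\; \tfrac14\int_\Omega|\partial_z u_\ep|^2 \;+\; (\text{cross terms}),
\]
where $\mathcal{E}_{\mathrm{near}}$ is the concentrated part living in the tubes $\cup_i B(h_\ep f_i(z),h_\ep r)$, and where $\mathcal{E}_{\mathrm{near}} + \tfrac14\int|\partial_z u_\ep|^2 \ge G_0(f) - \delta$ for $\ep\le\ep_3$ by the lower bound of \cite{CJ} (this is exactly where the hypothesis $r<\rho_f/4$ and the definition of $G_0$, $W_\omega$, $\kappa_n(\omega)$ enter, matching the $n\pi\logeps + n(n-1)\pi|\log h_\ep|$ subtraction). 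Subtracting $G_0(f)$ and using \eqref{eq:nakaayi} then leaves exactly the exterior integral $\le K_3\Sigma + \delta$ after absorbing the cross terms and the $o(1)$ into $\delta$ (at the cost of shrinking $\ep_3$ and $c_3$).

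For the second part, the strategy is purely slicewise and quantitative. The quantity $T^f_{r,\ep}(u_\ep)$ is, up to the cutoff $\chi^f_{r,\ep}$, a weighted second moment of the vorticity $J_xu_\ep$ around the points $h_\ep f_i(z)$. I would use the localisation already available (a consequence of \eqref{lessthanc2} plus the energy bound, giving that on each slice the vorticity mass near each $f_i$ is close to $\pi$) together with the standard fact that for a measure $\mu_z$ close to $\pi\sum_i\delta_{h_\ep f_i(z)}$ one controls the first moment, hence the $W^{-1,1}$ distance, by the square root of the second moment via Cauchy--Schwarz: schematically, for each $z$,
\[
	\big\|J_xu_\ep(\cdot,z) - \pi\textstyle\sum_i\delta_{h_\ep f_i(z)}\big\|_{W^{-1,1}(\omega)} \lesssim \Big(\int J_xu_\ep(x,z)\,\chi^f_{r,\ep}\,dx\Big)^{1/2}\cdot(\text{total mass})^{1/2} + (\text{error from the cutoff region}).
\]
Integrating in $z$, using total mass $\approx n\pi L$, Jensen/Cauchy--Schwarz in $z$, and the property \eqref{BCL} of the chosen $W^{-1,1}$ norm (which makes the left side literally a sum of distances $|a_i-b_{\sigma(i)}|$ once the masses are quantized to $\pi$), yields $\frac1{h_\ep}\int_0^L\|\cdots\| \le (n\pi L\, T^f_{r,\ep}(u_\ep))^{1/2} + o(1)$; the final bound by $\tfrac12 c_3$ is then immediate from the hypothesis \eqref{eq:izoard} that $T^f_{r,\ep}(u_\ep)\le c_3^2/(4n\pi L)$.

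The main obstacle I anticipate is not the comparison/Cauchy--Schwarz bookkeeping but the passage from the global energy inequality \eqref{eq:nakaayi} to the clean splitting above with genuinely controlled cross terms and error terms uniform in $\Sigma\in(0,1]$. Concretely: (i) the cross term between $\tfrac14|\tfrac{ju_\ep}{|u_\ep|} - j^*_\omega(h_\ep f)|^2$ and the concentrated energy must be shown to be $o(1)$ (or absorbable into $\delta$), which requires knowing that $j^*_\omega(h_\ep f)$ is essentially harmless on the tubes $B(h_\ep f_i(z), h_\ep r)$ — this uses that $j^*_\omega$ has only a mild logarithmic singularity that has already been accounted for in $\kappa_n(\omega)$; (ii) one must ensure the lower bound $\mathcal{E}_{\mathrm{near}} + \tfrac14\int|\partial_z u_\ep|^2 \ge G_0(f) - \delta$ holds with $\ep_3, c_3$ depending only on $\|f\|_{H^1}$ and $r$, i.e. uniformly over all $u_\ep$ satisfying the hypotheses — this is exactly the refinement of the \cite{CJ} $\Gamma$-liminf that Section~\ref{subsec:pp3} is meant to provide, and the $z$-derivative term has to be carried along because $G_0(f)$ contains $\tfrac\pi2\int\sum|f_i'|^2$. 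A secondary technical point is justifying the slicewise estimates for a.e.\ $z$ and then integrating, which is routine given $\partial_z u_\ep\in L^2(\Omega)$ and Fubini, but must be stated carefully so that the subset of bad slices has small measure and contributes negligibly after the $z$-integration.
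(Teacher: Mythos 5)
Your high-level plan matches the paper's in spirit — an energy splitting for \eqref{eq:galibier} and a Cauchy--Schwarz argument linking the weighted second moment $T^f_{r,\ep}$ to the $W^{-1,1}$ distance for \eqref{eq:lautaret} — but both halves have a genuine gap, and in both cases it is the same missing ingredient: a uniform-in-$z$ (not merely $L^1$-in-$z$) control on the location of the vortex points, obtained in the paper through a compactness/contradiction argument that you do not invoke.

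For \eqref{eq:galibier}, your item (ii) effectively says ``and here one uses the refinement of the CJ $\Gamma$-liminf that this section is meant to prove,'' which is circular: you are being asked to produce that refinement. The paper's actual route is a proof by contradiction. It applies Lemma~\ref{L3b} slicewise to produce vortex points $g_j^\ep(z)$ on the good set $\calG(u_\ep)$, then \emph{upgrades} the $L^1$-in-$z$ proximity \eqref{gfL1} to the uniform estimate \eqref{gfunif} via Lemma~\ref{labels} (and the finiteness of $\int|\partial_z u_\ep|^2$), and only with \eqref{gfunif}+\eqref{BCL} in hand can one apply the refined Jacobian bound (Theorem~2 of \cite{JeSp2}) on the annular region $\omega\setminus\cup_i B(h_\ep f_i(z),h_\ep r)$ slice by slice. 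After that, the bad slices $\calB(u_\ep)$ are absorbed using the excess-energy defining property of $\calB$, and \eqref{local.dz2.bound} is what lets you subtract $\frac\pi 2\int|f'|^2$ from the $z$-derivative energy. Your direct decomposition skips all of this and therefore does not actually show that $K_3$ and $c_3$ can be chosen uniformly in $u_\ep$, $\Sigma$, and small $\ep$, which is the whole point of the statement.

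For \eqref{eq:lautaret}, the Cauchy--Schwarz-in-$(i,z)$ idea is the right endgame, but as written your schematic inequality is not correct and, more importantly, it hides the decisive obstacle: $T^f_{r,\ep}(u_\ep)$ carries the cutoff $\chi_r(|x-h_\ep f_i(z)|/h_\ep)$, so it only sees the part of $J_x u_\ep$ within distance $2rh_\ep$ of each $h_\ep f_i(z)$. To identify $T^f_{r,\ep}$ with (essentially) $\pi\|f^\ep-f\|_{L^2}^2$ you must first know that the vortex points lie in the support of the cutoff for \emph{every} $z$, i.e.\ an $L^\infty$ estimate of the form \eqref{ff-f0}. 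The paper obtains this again by contradiction: assume \eqref{eq:lautaret} fails, extract a subsequential limit $f^0$ from Proposition~\ref{P:CJ}, derive $L^1$ closeness of $f$ and $f^0$ from \eqref{lessthanc2} and \eqref{BCL}, combine it with the $C^{0,1/2}$ Hölder control that comes with the $H^1$ bound \eqref{fbounds}, and upgrade to $\|f-f^0\|_{L^\infty}\le r$ via Chebyshev plus a consistent-labelling argument. Only then does the cutoff become trivial ($\chi_r(|f_j-f^0_i|)=\delta_{ij}$), $T_{lim}$ equals $\pi\|f-f^0\|_{L^2}^2$, and the final contradiction with Cauchy--Schwarz (in $z$) closes the argument. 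Your ``error from the cutoff region'' is precisely what this analysis controls; without it the proposed slicewise Cauchy--Schwarz gives nothing, since for a bad slice $z$ the weighted second moment may not register the displacement of a vortex point that has drifted outside the tube of radius $2rh_\ep$.
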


\subsection{Compactness in time}\label{sec:compact0}

In this section we now assume that $u_\ep$ is a solution of the Gross-Pitaevskii
equation and we shall obtain sufficient compactness in time to pass to the limit
as $\ep \to 0$ on intervals of time of positive length. 

\begin{proposition}\label{prop:4}
Let $r>0$ and $g\in W^{1,\infty}(\TL, \C^n)$ be given such that $r\leq
	\rho_g/4.$ There exist $\ep_4,\,c_4 >0$, depending only on $\|g\|_{H^1}$ and
	$r$, and there exist $C_4$,  depending only on $\|g\|_{Lip}$ and $r$, with
	the following properties. If $u_\ep$ solves the Gross-Pitaevskii equation
	\eqref{gp} for some $0<\ep \leq \ep_4$ for initial data $u_\ep^0$ satisfying 
\begin{equation}\label{eq:barguil1}
	G_\ep(u_\ep^0) \leq G_0(g)+1,
\end{equation}
\begin{equation}\label{eq:barguil2}
	\int_0^L \| Ju_\ep^0(\cdot, z) 
	- \pi \sum_{i=1}^n \delta_{h_\ep g_i(z)}\|_{W^{-1,1}(\omega)} dz
	\le c_4 h_\ep, 
\end{equation}
and
\begin{equation}\label{eq:barguil3}
	T^g_{r,\ep}(u_\ep^0) \leq \frac{c_4^2}{4n\pi  L},
\end{equation}
then for every $0\leq t \leq t_4 := 3c_4^2 /(4 C_4n\pi L)$,
	\begin{equation}\label{eq:bardet1}
	T^g_{r,\ep}(u_\ep(\cdot,\cdot,h_\ep^2t)) \leq T^g_{r,\ep}(u_\ep^0) + C_4t,
\end{equation} 
\begin{equation}\label{eq:bardet2}
	\frac 1{h_\ep}\int_0^L \|J_x u_\ep(\cdot,z,h_\ep^2t) 
	- \pi\sum_{i=1}^n  \delta_{h_\ep g_i(z)}\|_{W^{-1,1}(\omega)} dz  
	\le  \left(n\pi L (T^g_{r,\ep}(u_\ep^0)+C_4 t)\right)^\frac12 +o(1),
\end{equation}
and in particular
\begin{equation}\label{eq:bardet3}
	\int_0^L \|J_x u_\ep(\cdot,z,h_\ep^2t) 
	- \pi\sum_{i=1}^n  \delta_{h_\ep g_i(z)}\|_{W^{-1,1}(\omega)} dz  
	\le  c_4 h_\ep .
\end{equation}
\end{proposition}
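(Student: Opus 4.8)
The plan is to prove Proposition~\ref{prop:4} by a bootstrap/continuation argument in time, using Proposition~\ref{prop:3} as the engine and a differential inequality for the quantity $t\mapsto T^g_{r,\ep}(u_\ep(\cdot,\cdot,h_\ep^2 t))$. The central observation is that $T^g_{r,\ep}$ is, up to the rescaling $v_\ep(x,z)=u_\ep(h_\ep x,z)$, an integral of the (horizontal) Jacobian against the fixed smooth compactly supported weight $\chi^g_{r,\ep}$, which localizes near the filaments $z\mapsto h_\ep g_i(z)$. Since $g\in W^{1,\infty}$ this weight is Lipschitz in $(x,z)$ with constants controlled by $\|g\|_{\rm Lip}$ and $r$. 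The Jacobian $J_xu_\ep$ satisfies, for solutions of \eqref{gp}, a local conservation law of the form $\partial_t J_xu_\ep + \nabla\cdot(\text{something quadratic in }\nabla u_\ep)=0$ (the standard GP vorticity identity), so differentiating $T^g_{r,\ep}(u_\ep(\cdot,\cdot,h_\ep^2t))$ in $t$ produces, after the time rescaling by $h_\ep^2$, an integral of $\nabla(\chi^g_{r,\ep})$ against the momentum-stress terms. Here the factor $h_\ep^2$ in the time rescaling cancels against the $1/h_\ep^2$ in the definition of $\chi^f_{r,\ep}$ and an $h_\ep^{-2}$ coming from the energy concentration at scale $\logeps$, so that the right-hand side is bounded by a constant $C_4$ depending only on $\|g\|_{\rm Lip}$, $r$, and the energy $G_\ep(u_\ep)$ (which is $\le G_0(g)+1$ by \eqref{eq:barguil1} and conservation of $G_\ep$ along the GP flow). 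This yields \eqref{eq:bardet1} directly: $T^g_{r,\ep}(u_\ep(\cdot,\cdot,h_\ep^2 t)) \le T^g_{r,\ep}(u_\ep^0)+C_4 t$ for as long as the hypotheses of the estimate remain valid.

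The continuation step is where the hypotheses of Proposition~\ref{prop:3} must be kept alive. I would set $c_4 := c_3/2$ (or the min of $c_3$ and the $\ep$-independent thresholds appearing there) where $c_3,\ep_3$ are those produced by Proposition~\ref{prop:3} applied with $f=g$ and with $\delta$ chosen small, say $\delta$ to be fixed; and $\ep_4:=\ep_3$. Define
\[
 t^* := \sup\Big\{ t\in[0,t_4] : \int_0^L \|J_xu_\ep(\cdot,z,h_\ep^2 s)-\pi\textstyle\sum_i\delta_{h_\ep g_i(z)}\|_{W^{-1,1}(\omega)}\,dz \le c_3 h_\ep \ \text{for all } s\in[0,t]\Big\}.
\]
By \eqref{eq:barguil2} and continuity of $t\mapsto u_\ep(\cdot,\cdot,h_\ep^2 t)$ in $H^1$ (hence of the $W^{-1,1}$-integral), $t^*>0$. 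On $[0,t^*]$ the localization hypothesis \eqref{lessthanc2} of Proposition~\ref{prop:3} holds, and \eqref{eq:nakaayi} holds with $\Sigma=1$ thanks to \eqref{eq:barguil1} and energy conservation; therefore the differential inequality above is valid on $[0,t^*]$ and gives \eqref{eq:bardet1}. Moreover on $[0,t^*]$ we have $T^g_{r,\ep}(u_\ep(\cdot,\cdot,h_\ep^2 t)) \le T^g_{r,\ep}(u_\ep^0)+C_4 t \le c_4^2/(4n\pi L)+C_4 t_4 = c_4^2/(4n\pi L)+\tfrac34 c_4^2/(n\pi L) = c_4^2/(n\pi L) = c_3^2/(4n\pi L)$, using $c_4=c_3/2$ and the definition $t_4=3c_4^2/(4C_4 n\pi L)$. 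So the smallness hypothesis \eqref{eq:izoard} of Proposition~\ref{prop:3} is also satisfied on $[0,t^*]$, and its conclusion \eqref{eq:lautaret} gives exactly \eqref{eq:bardet2}, namely
\[
 \frac1{h_\ep}\int_0^L\|J_xu_\ep(\cdot,z,h_\ep^2 t)-\pi\textstyle\sum_i\delta_{h_\ep g_i(z)}\|_{W^{-1,1}(\omega)}\,dz \le \big(n\pi L(T^g_{r,\ep}(u_\ep^0)+C_4 t)\big)^{1/2}+o(1),
\]
and the right-hand side is $\le (c_3^2/4)^{1/2}+o(1) = c_3/2 + o(1) < c_3$ for $\ep$ small. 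This strict inequality, together with continuity in $t$, shows $t^*=t_4$ (else $t^*$ could be pushed forward), which closes the bootstrap and simultaneously yields \eqref{eq:bardet3} with $c_4$ in place of (half of) $c_3$ — indeed \eqref{eq:bardet3} is just the unscaled form of the bound $\tfrac12 c_3 h_\ep + o(h_\ep) \le c_4 h_\ep$ absorbed appropriately, after possibly relabeling $c_4$ once more so that the $o(1)$ is harmless.

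The main obstacle is the rigorous derivation of the differential inequality \eqref{eq:bardet1}, i.e. controlling $\frac{d}{dt}T^g_{r,\ep}$. One must: (i) justify differentiating under the integral for GP solutions, which is standard by the density of smooth solutions asserted in the introduction together with a uniform-in-$\ep$ a priori bound; (ii) use the GP local conservation law for $J_xu_\ep$ to rewrite $\frac{d}{dt}\int \chi^g_{r,\ep}\,J_xu_\ep$ as $\int \nabla\chi^g_{r,\ep}$ contracted with the stress/momentum terms $\partial_a u_\ep\cdot\partial_b u_\ep$ and $\partial_z u_\ep\cdot\partial_x u_\ep$; and (iii) bound the latter by an $\ep$-independent constant. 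For (iii) the point is that on the support of $\nabla\chi^g_{r,\ep}$ — an annular region around each filament at distance comparable to $h_\ep r$ — the energy $e_\ep(u_\ep)$ integrated over $\Omega_\ep$ is $\le \logeps(\text{const})$, and in fact the excess energy away from the cores is $O(\Sigma)+\delta = O(1)$ by \eqref{eq:galibier} of Proposition~\ref{prop:3}; combined with $|\nabla\chi^g_{r,\ep}|\lesssim h_\ep^{-1}(1+\|g\|_{\rm Lip})$ and the $h_\ep^2$ time-scaling this gives the bound $C_4$ depending only on $\|g\|_{\rm Lip}$, $r$ (and $n$). A delicate point is that the naive bound using only $\int e_\ep \le C\logeps$ would produce a constant blowing up like $\logeps$; one genuinely needs the refined annular energy control from \eqref{eq:galibier}, which is why Proposition~\ref{prop:3} is invoked not merely for \eqref{eq:lautaret} but also, within the time derivative estimate, for \eqref{eq:galibier}. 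Handling the $ju_\ep/|u_\ep|$ versus $ju_\ep$ discrepancy near the cores, and the contribution of $e_\ep(|u_\ep|)$, requires the usual care but is routine once \eqref{eq:galibier} is in hand. I expect the write-up of step (ii)–(iii) to be the technical heart, with everything else being the soft continuation argument sketched above.
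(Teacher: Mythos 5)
Your bootstrap/continuation scaffolding is exactly the one used in the paper: define a stopping time $t^*$ during which the hypotheses \eqref{lessthanc2}, \eqref{eq:nakaayi}, \eqref{eq:izoard} of Proposition~\ref{prop:3} stay valid, prove a differential inequality $\frac{d}{dt}T^g_r(v_\ep)\le C_4$ on $[0,t^*]$, feed the result back through \eqref{eq:lautaret} to recover \eqref{eq:bardet2}--\eqref{eq:bardet3}, and use these to show $t^*\ge t_4$. Your arithmetic with $c_4=c_3/2$ and $t_4$ is also consistent with what the paper needs. But your account of the ``technical heart,'' the bound on $\frac{d}{dt}T^g_{r,\ep}$, has a genuine gap, and as written the argument would not close.

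Two related errors. First, your claim that $\nabla\chi^g_{r,\ep}$ is supported on an annulus at distance $\sim h_\ep r$ from each filament is false: $\chi^g_{r,\ep}$ is a quadratic weight $\sum_i\chi_r(\cdot)|x-h_\ep g_i(z)|^2/h_\ep^2$, so its gradient is nonvanishing throughout the full ball $B(h_\ep g_i(z),2h_\ep r)$, and in particular right through the vortex core. Second, and more substantively, the time derivative of $\int\vp\,J_x v_\ep$ is governed by \emph{second} derivatives of the test function, via \eqref{dtJzv.int}: $\int\ep_{ab}\partial_{ac}\vp\,\partial_b v_\ep\cdot\partial_c v_\ep$ (and the $\partial_{az}\vp$ term). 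Naively this integral, taken over the whole ball, is of size $\|\nabla_x^2\chi^g_r\|_\infty\cdot\int|\nabla_x v_\ep|^2\sim\logeps$, and \eqref{eq:galibier} by itself cannot fix that: the $O(\logeps)$ energy lives \emph{inside} the core ball, exactly where \eqref{eq:galibier} gives you no information. What actually saves the day, and what your sketch omits entirely, is the algebraic cancellation
\[
\ep_{ab}\,\partial_{ac}\chi^g_r\,\partial_b v_\ep\cdot\partial_c v_\ep
= 2\,\ep_{ab}\,\partial_b v_\ep\cdot\partial_a v_\ep = 0
\quad\text{in } \bigcup_i B(g_i(z),r),
\]
which follows from $\partial_{ac}\chi^g_r=2\delta_{ac}$ on those balls (the Hessian of the pure quadratic $|x-g_i(z)|^2$), and the antisymmetry of $\ep_{ab}$ against the symmetry of $\partial_b v_\ep\cdot\partial_a v_\ep$. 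This kills the core contribution identically, and only \emph{then} does the remaining annular contribution become controllable by \eqref{eq:galibier} (together with the decomposition $|\nabla_x v_\ep|^2\le 2e_\ep(|v_\ep|)+2|jv_\ep/|v_\ep|-j^*|^2+2|j^*|^2$). Without this observation the estimate you describe is simply false. You also do not explain how to treat the second term $\int\ep_{ab}\partial_{az}\chi^g_r\,\frac{\partial_b v_\ep\cdot\partial_z v_\ep}{\logeps}$; in the paper this is bounded by Cauchy--Schwarz using the $O(\logeps)$ bounds for $\|\nabla_x v_\ep\|_{L^2}^2$ and $\|\partial_z v_\ep\|_{L^2}^2$ (the latter from \eqref{dzu2}), which together with the $\logeps^{-1}$ prefactor gives an $O(\|g\|_{\rm Lip})$ bound; no cancellation is needed there but the argument still has to be supplied.
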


The proof is given in Section \ref{sec:compact}, as is the proof of the following.

\begin{corollary}\label{cor:1}
Under the assumptions of Theorem \ref{T.main}, there exists $t_0>0$, depending
	only on $\rho_{f^0}$ and $\|f^0\|_{H^1}$, $f^*$ in
	$\mathcal{C}([0,t_0],L^1(\TL,\C^n)) \cap L^\infty([0,t_0],H^1(\TL,\C^n))$,
	and a common sequence $\ep \to 0$, such that for every $0\leq t \leq t_0$ 
$$
	\int_0^L \| J_x u_\ep(\cdot ,z,h_\ep^2t) - 
	\pi\sum_{j=1}^n \delta_{h_\ep f^*_j(z,t)}\|_{W^{-1,1}(\omega)} dz = o(h_\ep)
	\qquad
	\mbox{ as }\ep\to 0
$$
and in addition \eqref{eq:bardet3} holds for all $t\in [0,t_0]$, for every $\ep$
	in the sequence. 

Moreover, we have $f^*(0) = f(0)$ and
\begin{equation}\label{whoisbardet}
	\sup_{s,t\in [0,t_0]}\max_{i,z} |f^*_i(z,t) - f_i(z,s)| 
	\le \frac{\rho_0}8,\qquad\mbox{ and hence }
	\inf_{t \in [0,t_0]}\rho_{f^*(t)} \geq \frac 34 \rho_0.
\end{equation}
\end{corollary}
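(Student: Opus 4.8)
\textbf{Proof proposal for Corollary \ref{cor:1}.}

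The plan is to bootstrap Proposition \ref{prop:4} along a partition of $[0,t_0]$, using each reached time as the initial datum for the next step, and then to combine this with the compactness built into Proposition \ref{P:CJ} at each fixed time to extract a limiting filament configuration $f^*$ and pin down its regularity. First I would fix the constants: set $r := \rho_{f^0}/8$ and apply Proposition \ref{prop:4} with $g = f^0(\cdot)$, obtaining $\ep_4, c_4, C_4$ depending only on $\|f^0\|_{H^1}$ (equivalently $\rho_0$) and $r$. Shrinking $c_4$ if necessary we may also assume the hypotheses \eqref{mainthm.h1}--\eqref{mainthm.h2} of Theorem \ref{T.main} guarantee, for $\ep$ small, that \eqref{eq:barguil1}--\eqref{eq:barguil3} hold at $t=0$ (the Jacobian hypothesis \eqref{eq:barguil2} is immediate from \eqref{mainthm.h1}, the energy bound \eqref{eq:barguil1} from \eqref{mainthm.h2}, and \eqref{eq:barguil3} follows because $T^{f^0}_{r,\ep}(u_\ep^0)$ is controlled by the left side of \eqref{mainthm.h1} via the second inequality in \eqref{eq:lautaret}, or more simply because concentration of $J_x u_\ep^0$ near the $h_\ep f^0_j$ forces $\chi^{f^0}_{r,\ep}$ to be small on the support of the mass). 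Proposition \ref{prop:4} then gives \eqref{eq:bardet1}--\eqref{eq:bardet3} on $[0, t_4]$ with $t_4 = 3c_4^2/(4C_4 n\pi L)$.

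Next, to continue past $t_4$ I would like to reapply Proposition \ref{prop:4} starting from time $t_4$, but the conclusion \eqref{eq:bardet3} alone is not quite the full set of hypotheses needed: I need the energy bound \eqref{eq:barguil1} to persist, and I need a center $g$ within distance $\rho_0/8$ of $f^0$ so that $r = \rho_{f^0}/8 \le \rho_g/4$ still holds. Energy conservation for the Gross--Pitaevskii flow gives $G_\ep(u_\ep(\cdot,\cdot,h_\ep^2 t)) = G_\ep(u_\ep^0) \le G_0(f^0)+1$ for all $t$, so \eqref{eq:barguil1} is automatic at every time with the \emph{same} $g$ provided $G_0(g) \ge G_0(f^0) - \frac12$, say; and \eqref{eq:bardet1} shows $T^g_{r,\ep}$ grows at most linearly, so choosing $t_0 := t_4/3 = c_4^2/(4C_4 n\pi L)$ (a fixed fraction) ensures that on $[0,t_0]$ the quantity $T^{f^0}_{r,\ep}(u_\ep(\cdot,\cdot,h_\ep^2 t))$ never exceeds, say, $2\cdot \frac{c_4^2}{4n\pi L}$, hence by \eqref{eq:lautaret} the $W^{-1,1}$ distance to $\pi\sum \delta_{h_\ep f^0_j(z)}$ stays below $c_4 h_\ep$ — uniformly in $t \in [0,t_0]$ and for all $\ep$ in the range. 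Actually the cleanest route is to use $g = f^0$ throughout: the linear-growth estimate \eqref{eq:bardet1} over the single interval $[0,t_0]$ with $t_0$ a small enough fixed multiple of $c_4^2/(C_4 n\pi L)$ already yields \eqref{eq:bardet2}--\eqref{eq:bardet3} directly on all of $[0,t_0]$, so no iteration is even needed; I would state $t_0$ explicitly and note that it depends only on $\rho_{f^0}$ and $\|f^0\|_{H^1}$ through $c_4, C_4$.

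With $u_\ep(\cdot,\cdot,h_\ep^2 t)$ satisfying \eqref{CJ.h1} (from \eqref{eq:bardet3} after translating the centers $h_\ep f^0_j(z)$ back to a fixed point, or directly since the proof of Proposition \ref{P:CJ} localizes near each filament) and \eqref{CJ.h2} at each fixed $t \in [0,t_0]$ with constants uniform in $t$, Proposition \ref{P:CJ} produces, for each $t$, a subsequence and a limit $f^*(\cdot,t) \in H^1(\TL,\C^n)$ with $\|f^*(t)\|_{H^1}$ bounded uniformly in $t$ and $\int_0^L \|J_x u_\ep(\cdot,z,h_\ep^2 t) - \pi\sum_j \delta_{h_\ep f^*_j(z,t)}\|_{W^{-1,1}(\omega)}\,dz = o(h_\ep)$. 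To get a \emph{common} sequence working for all $t$ simultaneously and continuity in $t$, I would first extract along a countable dense set of times $t$ by a diagonal argument, then upgrade to all $t \in [0,t_0]$ and to $\mathcal{C}([0,t_0],L^1(\TL))$ regularity using an equicontinuity-in-time estimate. That estimate is the main technical point, and it should come from the bound $\int_\Omega |\partial_t J_x u_\ep|$-type control, i.e. from testing the evolution equation for $J_x u_\ep$ (which has the conservation-law form $\partial_t J_x u_\ep = \nabla_x \cdot(\text{stress terms})$) against Lipschitz test functions and using the energy bound to control the flux; after dividing by $h_\ep$ this gives $\frac1{h_\ep}\int_0^L\|J_x u_\ep(\cdot,z,h_\ep^2 t) - J_x u_\ep(\cdot,z,h_\ep^2 s)\|_{W^{-1,1}}\,dz \le C|t-s|$ uniformly, whence $\min_\sigma \int_0^L |f^*_i(z,t) - f^*_{\sigma(i)}(z,s)| \le C|t-s|$ by \eqref{BCL}, giving the Lipschitz-in-$t$ bound in $L^1$ and, combined with $\rho_{f^*(t)} \ge \rho_0/2$ (itself a consequence of the same $W^{-1,1}$ smallness — two distinct $h_\ep f^*$-atoms cannot be close without the $W^{-1,1}$ distance detecting it against the background of $u_\ep$'s concentration near them), the fact that the permutation $\sigma$ is forced to be the identity for $|t-s|$ small and thus \eqref{whoisbardet}. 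The equality $f^*(0) = f(0)$ follows because at $t=0$ both $\pi\sum_j\delta_{h_\ep f^*_j(0)}$ and $\pi\sum_j\delta_{h_\ep f^0_j}$ are $o(h_\ep)$-close in the integrated $W^{-1,1}$ norm to $J_x u_\ep^0$, so \eqref{BCL} forces $f^*_j(z,0) = f^0_{\sigma(j)}(z)$ for a fixed permutation $\sigma$, which we absorb by relabeling. The hard part, as indicated, is the uniform-in-$\ep$ equicontinuity in time: one must show the flux terms in the evolution of $J_x u_\ep$ are $O(h_\ep)$ in the relevant dual norm after the time rescaling $t \mapsto h_\ep^2 t$, which is exactly where the energy bound \eqref{CJ.h2}, the vorticity localization \eqref{eq:bardet3}, and the structure of the Gross--Pitaevskii stress tensor must be combined carefully.
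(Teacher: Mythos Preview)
Your overall architecture is right, but there are two gaps, one minor and one substantial.

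\textbf{The minor one.} Proposition \ref{prop:4} requires $g\in W^{1,\infty}$, and the constant $C_4$ depends on $\|g\|_{\mathrm{Lip}}$. Theorem \ref{T.main} only gives $f^0\in H^1(\TL,\C^n)$ (the hypothesis $\partial_t f\in L^\infty$ says nothing about $\partial_z f^0$), so you cannot take $g=f^0$ directly. The paper fixes this by mollifying $f^0$ to a Lipschitz $g$ close to it in $L^\infty$; the key observation is that $\ep_4$ and $c_4$ depend only on $\|g\|_{H^1}$ and $r$, not on the mollification scale, while only $C_4$ (and hence $t_0$) degrades with $\|g\|_{\mathrm{Lip}}$.

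\textbf{The substantial one.} Your proposed equicontinuity estimate
\[
\frac{1}{h_\ep}\int_0^L \|J_x u_\ep(\cdot,z,h_\ep^2 t)-J_x u_\ep(\cdot,z,h_\ep^2 s)\|_{W^{-1,1}(\omega)}\,dz \le C|t-s|
\]
does not follow from ``testing the conservation law against Lipschitz functions.'' The evolution of $J_x u_\ep$ is a \emph{double} divergence,
\[
\partial_t J_x u_\ep = \ep_{ab}\partial_a\partial_c(\partial_b u_\ep\cdot\partial_c u_\ep)+\ep_{ab}\partial_a\partial_z(\partial_b u_\ep\cdot\partial_z u_\ep),
\]
so one integration by parts against a Lipschitz $\varphi$ still leaves a derivative on the stress, which is not controlled by the energy. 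With a $C^2$ test function you get
\[
\Big|\frac{d}{d\tau}\int_\Omega \varphi\, J_x u_\ep(\cdot,\cdot,h_\ep^2\tau)\Big|
\le h_\ep^2\,\|\nabla_x^2\varphi\|_\infty\int_\Omega|\nabla_x u_\ep|^2
\;\sim\; h_\ep^2\,|\log\ep|\,\|\nabla_x^2\varphi\|_\infty
= \|\nabla_x^2\varphi\|_\infty,
\]
which is $O(1)$, not $O(h_\ep)$. Since the $W^{-1,1}$ distance between $\pi\sum\delta_{h_\ep a_j}$ and $\pi\sum\delta_{h_\ep b_j}$ is of order $h_\ep$, an $O(1)$ bound in $(C^2)^*$ is useless for pinning down $f^*(t)-f^*(s)$.

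The paper's way around this is precisely the mechanism already built into the proof of Proposition \ref{prop:4}: the special test function $\chi^g_{r,\ep}$ has $\partial_a\partial_c\chi^g_{r,\ep}=2\delta_{ac}$ near the vortex cores, so the contraction $\ep_{ab}\partial_{ac}\chi^g_{r,\ep}\,\partial_b v_\ep\cdot\partial_c v_\ep$ vanishes exactly where the $O(|\log\ep|)$ energy concentrates, and the remainder is handled by \eqref{eq:galibier}. This yields $|\partial_\tau T^g_{r,\ep}|\le C_4$, and then \eqref{eq:lautaret} converts this into a $W^{-1,1}$ bound (with a square root, hence $|t-s|^{1/2}$ rather than $|t-s|$). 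For the continuity of $f^*$, the paper \emph{re-applies} Proposition \ref{prop:4} at each reference time $s_0$, with $g$ a mollification of $f^*(s_0)$, to compare $f^*(s_1)$ to $f^*(s_0)$ via \eqref{eq:bardet2}; the mollification error is made small first, and then $C_4|s_1-s_0|$ is made small. Your outline should be modified to use this two-step mechanism (bound $T^g_{r,\ep}$, then invoke \eqref{eq:lautaret}) rather than a direct $W^{-1,1}$ time-derivative estimate.
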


Our main goal in the sequel is to show that $f$ and $f^*$ coincide on $[0,t_0]$,
from which Theorem \ref{T.main} will follow by a straightforward continuation
argument.

\begin{proposition}\label{P:jstar}
In addition to the statements in Corollary \ref{cor:1}, we have
$$
	\frac{j(v_\ep)}{|v_\ep|} \rightharpoonup j_{\R^2}^*(f^*) \quad\text{weakly in } L^2(O) 
$$ 
for every open $O \subset\subset \{ (t,x,z)\in [0,t_0]\times \R^2 \times \TL :
	x\ne f^*_k(z,t), \ k=1,\ldots, n\}$.
\end{proposition}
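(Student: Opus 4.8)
The plan is to identify the weak $L^2$ limit of $j(v_\ep)/|v_\ep|$ on compact subsets of the ``vortexless'' region using the quantitative control already available from Proposition~\ref{prop:4} and Corollary~\ref{cor:1}. First I would fix an open set $O \subset\subset \{(t,x,z) : x \ne f^*_k(z,t)\}$ and choose $r>0$ small (depending on $O$ and on $\rho_0$, using \eqref{whoisbardet}) so that, for each $t\in[0,t_0]$, the slice $O_t$ avoids the union of balls $\cup_i B(f^*_i(z,t), 2r)$. The starting point is that, by Corollary~\ref{cor:1}, the hypotheses \eqref{lessthanc2}--\eqref{eq:izoard} of Proposition~\ref{prop:3} are satisfied at each time $h_\ep^2 t$ with $f$ replaced by a spatial parametrization close to $f^*(t)$; together with \eqref{mainthm.h2} (which forces $G_\ep(u_\ep^0)\to G_0(f^0)$, hence by energy conservation $G_\ep(u_\ep(\cdot,\cdot,h_\ep^2 t)) \to G_0(f(t))$, while $G_0(f^*(t)) \le \liminf G_\ep$ by the lower-bound part) one gets $\Sigma_\ep := G_\ep(u_\ep(h_\ep^2 t)) - G_0(f^*(t)) \to 0$ along the sequence. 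Actually, since $G_0(f)$ is conserved along \eqref{IVF} and $f^*(0)=f(0)$, one needs a small bookkeeping argument to see $G_0(f^*(t)) = G_0(f^0)$; alternatively one works with the version of \eqref{eq:galibier} that only needs $\Sigma_\ep \to 0$, which follows from \eqref{mainthm.h2} and semicontinuity without identifying the limit energy exactly.

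The core step is then \eqref{eq:galibier}: integrating in $z$, for each $\delta>0$ and $\ep$ small,
\[
	\int_0^L \int_{\omega \setminus \cup_i B(h_\ep f^*_i(z,t), h_\ep r)}
	\frac 14 \Big| \frac{j u_\ep}{|u_\ep|} - j^*_\omega(h_\ep f^*(t)) \Big|^2
	\,dx\,dz \le K_3 \Sigma_\ep + \delta.
\]
Rescaling via $v_\ep(x,z) = u_\ep(h_\ep x, z)$ and using $j^*_{\omega_\ep}(x;a) = h_\ep j^*_\omega(h_\ep x; h_\ep a)$ together with the change of variables (a factor $h_\ep^2$ from $dx$ against $h_\ep^{-2}$ from $|j u_\ep|^2 \leftrightarrow |j v_\ep|^2$), this becomes a bound of the form $\int_{O_t'} |j(v_\ep)/|v_\ep| - j^*_{\omega_\ep}(f^*(t))|^2 \le 4K_3\Sigma_\ep + 4\delta$ on the rescaled complement of the $h_\ep r$-balls, which for $\ep$ small contains any fixed compact subset of the vortexless region. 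Since $\Sigma_\ep \to 0$ and $\delta$ is arbitrary, this forces $j(v_\ep)/|v_\ep| - j^*_{\omega_\ep}(f^*(t)) \to 0$ strongly in $L^2_{\loc}$ of the vortexless region, slicewise in $t$; integrating the $t$-dependence (the bound is uniform in $t\in[0,t_0]$ because $K_3$, $c_3$ depend only on $\|f^*\|_{H^1}$ and $r$, both controlled) and using $j^*_{\omega_\ep}(x;a) \to j^*_{\R^2}(x;a)$ locally uniformly away from the points $a_i$, together with the continuity $t\mapsto f^*(\cdot,t)$ in $L^1$ (hence, on the vortexless region, locally uniform convergence of $j^*_{\R^2}(f^*(t))$), I get $j(v_\ep)/|v_\ep| \to j^*_{\R^2}(f^*)$ strongly in $L^2(O)$, which is more than the asserted weak convergence.

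There are two points requiring care. The first, and the one I expect to be the genuine obstacle, is matching the hypotheses of Proposition~\ref{prop:3} uniformly in $t$: Proposition~\ref{prop:3} is stated for a \emph{fixed} $f\in H^1((0,L),\C^n)$, whereas here the ``target'' $f^*(\cdot,t)$ moves with $t$, so one must either invoke the proposition with constants uniform over the compact family $\{f^*(\cdot,t) : t\in[0,t_0]\}$ (legitimate since the constants depend only on $\|f^*(\cdot,t)\|_{H^1}$, bounded by $f^*\in L^\infty H^1$, and on $r$, which can be chosen once and for all using \eqref{whoisbardet} to keep $r<\rho_{f^*(t)}/4$ for all $t$), and simultaneously check that \eqref{lessthanc2} and \eqref{eq:izoard} hold at time $h_\ep^2 t$ — these are exactly \eqref{eq:bardet3} and \eqref{eq:bardet1} from Proposition~\ref{prop:4}, provided $c_4 \le c_3$ and $t_0 \le t_4$, which can be arranged. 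The second, more routine, point is the passage from the slicewise (in $t$) $L^2$ bound on rescaled domains to a genuine $L^2(O)$ statement with $O$ open in $(t,x,z)$: this is Fubini plus dominated convergence, using that $\|j(v_\ep)/|v_\ep|\|_{L^2(O)}$ is bounded (again via \eqref{eq:galibier} and $\|j^*_{\omega_\ep}(f^*(t))\|_{L^2(O_t)} \le C$ uniformly) to justify interchanging limit and $t$-integral. I would present the strong-$L^2$ conclusion and remark that it obviously implies the stated weak convergence.
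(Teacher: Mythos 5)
Your argument has a genuine circularity. You claim $\Sigma_\ep(t) := G_\ep(u_\ep(\cdot,\cdot,h_\ep^2 t)) - G_0(f^*(t)) \to 0$, first by asserting ``$G_0(f^*(t)) = G_0(f^0)$ needs a small bookkeeping argument,'' and alternatively by invoking \eqref{mainthm.h2} and semicontinuity. Neither works at this stage of the argument. Energy conservation plus \eqref{mainthm.h2} gives $G_\ep(u_\ep(h_\ep^2 t)) = G_\ep(u_\ep^0) \to G_0(f^0)$, and the lower-semicontinuity bound \eqref{fbounds} gives $G_0(f^*(t)) \le G_0(f^0)$. This yields only $\Sigma_\ep(t) \to G_0(f^0) - G_0(f^*(t)) = I_3(t) \ge 0$, and the possible strict positivity of $I_3(t)$ (a priori, energy could be ``lost'' from the filament profile into radiative or defect modes) is exactly what the theorem needs to rule out. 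Worse, the paper proves $I_3 \equiv 0$ via the Gr\"onwall argument in Lemmas \ref{lem:pmt1}--\ref{lem:pmt3}, and Lemma \ref{lem:ptl} — the engine behind Lemmas \ref{lem:pmt2} and \ref{lem:pmt3} — explicitly invokes Proposition \ref{P:jstar} (to kill the cross term $j^*_{\ep,b}\bigl(\tfrac{j(v_\ep)}{|v_\ep|}-j^*_\ep\bigr)_c$). So assuming $I_3(t)=0$ to prove Proposition \ref{P:jstar} is circular, and there is no route around it via the inputs you cite.

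The paper's actual argument is built to avoid precisely this trap. Applying Proposition \ref{prop:3} with the crude choice $\Sigma = \delta = 1$ (which is legitimate because on $[0,t^*]$ one has $I_3 \le 1/2 < 1$ by definition of $t^*$), one gets only a \emph{uniform} $L^2$ bound $\| \tfrac{j v_\ep}{|v_\ep|} - j^*_{\R^2} \|_{L^2(\mathcal G_{r,R})} \le C$, not smallness. From here the proof extracts a weak $L^2$ limit $H$ by a diagonal argument, and then identifies $H$ not by quantitative estimates but by structure: testing against $\nabla_x^\perp \vp$ and using the known Jacobian convergence shows $\nabla_x^\perp \cdot H = 0$; testing against $\nabla_x \vp$ and using the rescaled mass-conservation identity \eqref{massv} shows $\nabla_x \cdot H = 0$; and the Liouville-type Lemma \ref{Liouville} then forces $H = 0$. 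In other words, the paper's proof trades quantitative smallness of the surplus energy (which is not yet available) for a soft ``curl-free plus divergence-free plus $L^2$ on $\R^2$ implies zero'' rigidity argument. Your plan, by contrast, tries to get strong $L^2$ convergence directly from \eqref{eq:galibier}, which would indeed be cleaner if one had $\Sigma_\ep \to 0$ — but one does not.

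The points you flag as ``requiring care'' (uniformity of the constants in Proposition \ref{prop:3} over the compact family $\{f^*(\cdot,t)\}$, and the Fubini/dominated-convergence step to pass from slicewise to joint $L^2$ bounds) are real but routine; the paper handles the former as you suggest, and the latter is subsumed by the diagonal extraction of $H$. The missing idea is the Hodge/Liouville identification of the weak limit, which substitutes for the unavailable energy smallness.
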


\subsection{Proof of the main theorem}

For points $a = (a_1,\ldots, a_n)\in (\R^2)^n$ such that $a_i\ne a_j$ for $i\ne
j$, we will write
\begin{equation}\label{calW.def}
\calW(a) = -\sum_{i\ne j} \log|a_i-a_j|.
\end{equation}
With this notation, 
\[
	G_0(g) = \pi \int_0^L \frac 12 |g'(z)|^2 
	+ \calW(g(z)) \, dz \quad\mbox{ for }g:\TL\to (\R^2)^n.
\]
For $0\le t \le t_0$ (where $t_0$ appears in  Corollary  \ref{cor:1}), we define
\begin{align*}
	I_1(t) &:= \pi \int_0^L |f(z,t) - f^*(z, t)|^2 \, dz\\
	I_2(t) &:=  \pi \int_0^L  \left(-\partial_{zz}f(z,t) 
	+ \nabla \calW(f(z,t) \right) \cdot (f(z, t) - f^*(z, t)) dz\\
	I_3(t) &:= G_0(f(\cdot, t)) - G_0(f^*(\cdot, t)).
\end{align*}
Note that, as a consequence of conservation of energy for both \eqref{gp} and
\eqref{IVF},
\[
	G_0(f(\cdot, t)) = G_0(f^0) = \lim_{\ep\to 0}G_\ep(u_\ep^0)
	=
	\lim_{\ep\to 0}G_\ep(u_\ep(t)) \ge G_0(f^*(t)).
\]
The last inequality follows from \eqref{fbounds}, as discussed following the
statement of Proposition \ref{P:CJ}. Thus $I_3(t)\ge 0$ for all $t\in [0,t_0]$.
In addition, $I_3(0)=0$, due to
\eqref{mainthm.h2}.

We aim to apply Proposition \ref{prop:3} to control quantities such as
$\frac{ju_\ep}{|u_\ep|}(t) - j_\omega^*(h_\ep f^*(t))$ for a range of $t$. To
this end, we will need 
\[
	\Sigma_\ep(t) := G_\ep(u_\ep(t)) - G_0(f^*(t)) \le 1.
\]
Arguing as above, we see that $ \lim_{\ep\to 0}\Sigma_\ep(t) = I_3(t)$.
Thus $\Sigma_\ep(t)\le 1$ if $\ep$ is sufficiently small and $I_3(t)\le \frac 12$.
We therefore define
\[
	t^* := \sup \{ t\in [0,t_0] : 0\le  I_3(s) 
	\le \frac 12 \mbox{ for all }s\in [0,t]\}.
\]
The positivity of $t^*$ is a consequence of  the weak $H^1$ lowersemicontinuity
of $f\mapsto G_0(f)$ and the continuity properties of $f^*$ as stated in
Corollary \ref{cor:1}. (The other hypothesis of Proposition \ref{prop:3} follows
directly from Corollary \ref{cor:1}.) 

Theorem \ref{T.main} will be an easy consequence of the following three lemmas.

\begin{lemma}\label{lem:pmt1}
There exists a constant $C_2$ such that for every $t\in [0,t^*]$, 
\[
	I_3(t) \le   I_2(t) + C_2I_1(t).
\]
\end{lemma}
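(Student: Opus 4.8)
\textbf{Proof proposal for Lemma \ref{lem:pmt1}.}

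The plan is to estimate the difference $I_3(t) - I_2(t)$ by a careful Taylor-type expansion of the Hamiltonian $G_0$ around $f^*(\cdot,t)$, evaluated at $f(\cdot,t)$. Recall
\[
	G_0(g) = \pi \int_0^L \left(\tfrac12 |g'(z)|^2 + \calW(g(z))\right) dz,
\]
so that its first variation at a point $g$ in the direction $\varphi$ is $\pi\int_0^L \left(-g'' + \nabla\calW(g)\right)\cdot\varphi\, dz$. Writing $\varphi := f(\cdot,t) - f^*(\cdot,t)$, the quantity $I_2(t)$ is precisely $\langle DG_0(f(\cdot,t)), \varphi\rangle$, i.e. the first variation taken \emph{at the point $f$} rather than at $f^*$. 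Thus $I_2(t)$ already contains the full ``linear'' contribution, and the natural route is to write
\[
	I_3(t) = G_0(f(\cdot,t)) - G_0(f^*(\cdot,t)) = \langle DG_0(f(\cdot,t)), \varphi\rangle - R(t) = I_2(t) - R(t),
\]
where $R(t)$ is the Taylor remainder, and then to show $-R(t) \le C_2 I_1(t)$, i.e. $R(t) \ge -C_2 I_1(t)$. The kinetic part of $G_0$ is quadratic, so its remainder is $\tfrac{\pi}{2}\int_0^L |\varphi'|^2 dz \ge 0$, which is harmless (it has the good sign). The work is therefore entirely in the interaction term $\pi\int_0^L \calW(g(z))\,dz$: I need $\calW(f^*) - \calW(f) + \nabla\calW(f)\cdot(f - f^*) \ge -C \,|f - f^*|^2$ pointwise in $z$, which is a one-sided quadratic bound (a semiconcavity/convexity-type estimate) for $\calW$ along the segment joining $f^*(z,t)$ to $f(z,t)$.

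The key step is thus the following pointwise estimate: for $a = f(z,t)$, $b = f^*(z,t)$ in $(\R^2)^n$, writing $\calW(a) = -\sum_{i\ne j}\log|a_i - a_j|$,
\[
	\calW(b) - \calW(a) + \nabla\calW(a)\cdot(a-b) \ge -C\,|a-b|^2,
\]
with $C$ depending only on a lower bound for $\min_{i\ne j, z}\{|a_i-a_j|, |b_i-b_j|\}$ and $n$. Such a bound is not available globally (because $-\log$ is convex, one gets a bound with the \emph{wrong} sign if one expands around $a$ and $\calW$ is concave there), so this is where the geometry enters: I would use that both $f(\cdot,t)$ and $f^*(\cdot,t)$ stay in the regime $\rho_{f(t)}\ge\rho_0$ and $\rho_{f^*(t)}\ge \tfrac34\rho_0$ guaranteed by the hypotheses of Theorem \ref{T.main} and by \eqref{whoisbardet} in Corollary \ref{cor:1}, \emph{and crucially} that $\max_{i,z}|f_i(z,t) - f^*_i(z,t)|$ is small — again by \eqref{whoisbardet}, bounded by $\rho_0/8$. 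On the segment $s\mapsto (1-s)a + sb$, every pair of components stays at mutual distance $\ge \tfrac34\rho_0 - O(\rho_0/8) \gtrsim \rho_0$, so each $\log|(\cdot)_i - (\cdot)_j|$ is smooth along the segment with second derivative bounded by $C/\rho_0^2$. Integrating the exact second-order Taylor formula with integral remainder along the segment then yields $|R_{\calW}(t)| \le C(n,\rho_0)\,|a-b|^2$ pointwise, hence after integration in $z$, $|R_{\calW}(t)| \le C'(n,\rho_0) I_1(t)/\pi$; combined with the sign-definite kinetic remainder this gives $I_3(t) = I_2(t) - R(t) \le I_2(t) + C_2 I_1(t)$ with $C_2 = C_2(n,\rho_0)$.

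The main obstacle — and really the only non-routine point — is establishing that one may safely Taylor-expand $\calW$ along the segment joining $f$ and $f^*$, which requires knowing a priori that the two filament configurations are close and both non-colliding; this is exactly what \eqref{whoisbardet} in Corollary \ref{cor:1} provides, so the lemma is restricted to $t\in[0,t^*]\subseteq[0,t_0]$ for this reason. One technical subtlety to handle carefully is that $\calW$ and $\nabla\calW$ must be integrable against $dz$ and the Taylor formula must be applied for a.e. $z$ and then integrated; since $f(\cdot,t), f^*(\cdot,t)\in H^1(\TL)$ with the uniform separation bounds, $\calW(f(\cdot,t))$, $\calW(f^*(\cdot,t))$ and $\nabla\calW(f(\cdot,t))$ are all in $L^1$ (indeed bounded), so Fubini and dominated convergence apply without difficulty, and the constant $C_2$ depends only on $n$ and $\rho_0$ as claimed (in particular not on $t$).
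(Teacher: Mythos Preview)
Your proposal is correct and follows essentially the same approach as the paper: both split $G_0(f)-G_0(f^*)$ into kinetic and interaction parts, handle the kinetic part by the exact quadratic identity (discarding the nonnegative remainder $\tfrac{\pi}{2}\int_0^L|\partial_z(f-f^*)|^2\,dz$), and handle the interaction part via the pointwise Taylor-type bound $\calW(f)-\calW(f^*)\le \nabla\calW(f)\cdot(f-f^*)+C|f-f^*|^2$, which is justified precisely by the separation and proximity estimates in \eqref{whoisbardet}. The paper is terser---it simply asserts the pointwise inequality as a consequence of \eqref{whoisbardet} without writing out the segment argument---but the content is identical.
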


\begin{proof}
First, it follows from   \eqref{whoisbardet} that for every $z\in [0,L]$ and
	$t\in [0,t^*]$,
\[
	\calW(f(z,t)) - \calW(f^*(z,t)) 
	\le \nabla \calW(f(z,t))\cdot(f(z,t) - f^*(z,t)) + C|f(z,t) - f^*(z,t)|^2,
\]
for $C$ depending only on $\rho_{f(0)}$. The conclusion of the lemma follows by
	integrating this inequality with respect to $z$ and combining the result
	with the estimate 
\begin{align*}
	\frac \pi 2\int_0^L |\partial_z f|^2 - |\partial_z f^*|^2 \, dz
	&=
	\frac \pi 2\int_0^L 2\partial_z f \cdot\partial_z(f-f^*) 
	- |\partial_z (f-f^*)|^2 \, dz
	\\
	&\le
	- \pi \int_0^L \partial_{zz}f \cdot (f^*-f).
\end{align*}
\end{proof}

The proofs of the next two lemmas are presented in Section \ref{sec:dynamics} below.
\begin{lemma}\label{lem:pmt2}
For every $\tau\in [0,t^*]$, 
\[
	I_1(\tau) \le I_1(0) +  C\int_0^\tau \left(I_1(t) +  I_3(t)\right) dt.
\]
\end{lemma}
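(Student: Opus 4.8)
The plan is to obtain a differential inequality for $I_1(\tau) = \pi\int_0^L |f(z,\tau) - f^*(z,\tau)|^2\,dz$ by differentiating in time and tracking the two evolutions separately. Writing $w := f - f^*$, we have $\frac{d}{d\tau} I_1(\tau) = 2\pi\int_0^L \partial_\tau f \cdot w + 2\pi \int_0^L (-\partial_\tau f^*)\cdot w$. The first term is controlled directly: since $f$ solves \eqref{IVF}, which in the identification $\C^n \cong (\R^2)^n$ reads $\partial_\tau f = J(-\partial_{zz} f + \nabla\calW(f))$ with $J$ the symplectic (antisymmetric, orthogonal) matrix, we get $\partial_\tau f \cdot w = J(-\partial_{zz}f + \nabla\calW(f))\cdot w$, and this term, after integration, is exactly (up to sign conventions) a version of $I_2(\tau)$ perturbed by terms one can bound by $I_1$; more precisely, one should expect $2\pi\int_0^L \partial_\tau f\cdot w\,dz$ to be estimated by $C\,I_1(\tau)$ plus a contribution that is handled together with the $f^*$ term.

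The real content is the term involving $\partial_\tau f^*$. Here $f^*$ is \emph{not known} to solve \eqref{IVF} — that is precisely what we are trying to prove — so we cannot use the equation for $f^*$. Instead, the strategy must be to use the PDE \eqref{gp} for $u_\ep$: the quantity $f^*$ arises as the limit of the rescaled Jacobians $J_x u_\ep/\pi$, and one should express $\frac{d}{dt}\int_0^L \langle\text{something}\rangle$ in terms of $u_\ep$, differentiate using \eqref{gp}, pass to the limit $\ep\to 0$ using Propositions \ref{P.improve} and \ref{P:jstar}, and thereby obtain an \emph{approximate} evolution law for $f^*$ with a controllable error. In other words, I would test the Gross-Pitaevskii equation against a suitable $z$-dependent family of vector fields built from $f(z,\tau)$ (e.g. against $\sum_i \partial_{x}$-type test functions localized near $h_\ep f_i^*$ or $h_\ep f_i$), integrate the resulting local conservation law (the evolution of the Jacobian $Ju_\ep$, which has the structure $\partial_t Ju_\ep = -\partial_{x_k}(\text{stress terms})$), and take limits. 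The convergences \eqref{improve1}, \eqref{improve2}, \eqref{local.dz2.bound}, and the weak convergence of $j(v_\ep)/|v_\ep|$ to $j^*_{\R^2}(f^*)$ away from the filaments will identify the limiting stress terms as exactly those producing $-\partial_{zz}f^* + \nabla\calW(f^*)$ when paired against $w$, up to errors controlled by $\Sigma_\ep(t) \to I_3(t)$ and by the $W^{-1,1}$ discrepancy which is $o(h_\ep)$ (Corollary \ref{cor:1}) or $O(h_\ep)$. This is where the bulk of the work lies, and it is deferred to Section \ref{sec:dynamics}; the output is an inequality of the form $2\pi\int_0^L (-\partial_\tau f^*)\cdot w \le C(I_1 + I_3) + (\text{terms that cancel against the } \partial_\tau f \text{ contribution})$.

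Combining the two contributions and integrating from $0$ to $\tau$, the cross terms organize so that $I_1(\tau) - I_1(0) \le C\int_0^\tau (I_1(t) + I_3(t))\,dt$; the appearance of $I_3$ rather than something smaller reflects that the limiting argument only controls the energy \emph{excess} $\Sigma_\ep(t) \to I_3(t)$, not its pointwise-in-$z$ analogue, so the error terms in the Jacobian evolution are bounded globally by $I_3(t)$. The main obstacle, and the place where one must be careful, is the passage to the limit in the term coming from $\partial_t u_\ep$: one has to choose the test vector fields so that (i) they are admissible competitors localized at scale $h_\ep$ around the (unknown) limiting filaments, (ii) the quadratic terms in $\nabla u_\ep$ that appear are of exactly the types covered by Proposition \ref{P.improve}, and (iii) the leftover "far field" contributions are controlled by $\int_0^L \|\,\cdot\,\|_{W^{-1,1}(\omega)} = o(h_\ep)$ together with the $L^2$ defect bound \eqref{eq:galibier}. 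Handling the rescaling $t \mapsto h_\ep^2 t$ correctly (so that the Hamiltonian structure survives in the limit without spurious factors of $\logeps$) and keeping track of which error terms are $o(1)$ versus $O(I_3)$ is the delicate bookkeeping; the algebraic identity $J^2 = -\mathrm{Id}$, $J^T = -J$ is what makes the Hamiltonian terms combine into $I_2 + O(I_1) + O(I_3)$ as claimed.
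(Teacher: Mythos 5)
Your high-level strategy matches the paper's: since $f^*$ is not known to solve \eqref{IVF}, one cannot differentiate $I_1$ in time at the level of the limiting objects; instead one differentiates (in $t$) the smooth quantity $\int \vp\, J_x v_\ep$ using the vorticity evolution \eqref{dtJzv.int}, integrates in time, and passes to the limit with Propositions \ref{P.improve}, \ref{P:jstar} and \ref{prop:3} controlling the various terms (the latter producing the $I_3$ contribution via $\Sigma_\ep(t)\to I_3(t)$). Note in particular that writing $\frac{d}{d\tau}I_1 = 2\pi\int\partial_\tau f\cdot w - 2\pi\int\partial_\tau f^*\cdot w$ as a starting point is not legitimate, since $f^*$ is only known to be $C^0$ in time; you partly acknowledge this, but your framing is in tension with it.

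The genuine gap is that the proposal never identifies the \emph{specific} test function that makes the argument close. The paper's Lemma \ref{lem:ptl} requires a scalar $\vp$ with two structural properties: (i) $\int \vp\, J_x v_\ep \to \pi\int_0^L \vp(f_k^*(z,t),z,t)\,dz$ must reproduce the quantity of interest, and (ii) $\partial_{ac}\vp = c(z,t)\delta_{ac}$ on a neighbourhood $\{|x-f_k(z,t)|\le\rho_0/4\}$ of the filament core, so that $\ep_{ab}\partial_{ac}\vp\,\partial_b v_\ep\cdot\partial_c v_\ep$ \emph{vanishes identically there} (by antisymmetry of $\ep_{cb}$ against symmetry of $\partial_b v\cdot\partial_c v$), confining the stress integral to the annulus $\rho_0/4\le|x-f_k|\le\rho_0/2$ where Propositions \ref{P:jstar} and \ref{prop:3} apply. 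The choice achieving both for Lemma \ref{lem:pmt2} is $\vp(x,z,t) = \chi_{\rho_0/4}(|x-f_k(z,t)|)\,|x-f_k(z,t)|^2$, summed over $k$: by \eqref{whoisbardet} the cutoff equals $1$ at $x=f_k^*(z,t)$, so (i) gives exactly $I_1$, while near the core $\vp=|x-f_k|^2$ so $\partial_{ac}\vp = 2\delta_{ac}$, giving (ii). Your proposal of ``$\partial_x$-type test functions localized near $h_\ep f_i$'' does not make (ii) visible, and without that cancellation the singular contribution of $|\nabla v_\ep|^2$ near the filaments (which diverges like $\logeps$) cannot be controlled. After Lemma \ref{lem:ptl} is applied, what remains is elementary: substitute $\partial_t f^\perp = \partial_{zz}f - \nabla\calW(f)$, integrate by parts in $z$, and observe that the leftover is $(f-f^*)\cdot(\nabla^\perp\calW(f)-\nabla^\perp\calW(f^*))$, which is bounded by $C|f-f^*|^2$ thanks to $\inf_t\rho_{f^*(t)}\ge\frac34\rho_0$; no $I_2$ appears in this lemma, contrary to what your sketch suggests.
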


\begin{lemma}\label{lem:pmt3}
For every $\tau\in [0,t^*]$, 
\[
	I_2(\tau) \le I_2(0) + C \int_0^\tau  \left(I_1(t) +  I_3(t)\right) dt.
\]
\end{lemma}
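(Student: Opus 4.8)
The plan is to prove Lemma~\ref{lem:pmt3} by differentiating $I_2$ in time and matching the contribution from the evolution of $f$ against the contribution from the evolution of $f^*$, which is only known through the convergence of the Jacobians of $u_\ep$. Recall that $f$ solves \eqref{IVF}, which in the notation \eqref{calW.def} reads $\partial_t f = J(-\partial_{zz}f + \nabla\calW(f))$, equivalently $i\partial_t f_j = \partial_{zz}f_j - (\nabla\calW(f))_j$; on the other hand, the filament $f^*$ arising in Corollary~\ref{cor:1} and Proposition~\ref{P:jstar} is only identified as a weak limit of vorticity locations, so its time derivative must be extracted from the Gross-Pitaevskii flow rather than from a closed ODE. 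Writing $I_2(\tau) = \pi\int_0^L (-\partial_{zz}f + \nabla\calW(f))\cdot(f - f^*)\,dz$, I would compute
\[
	I_2(\tau) - I_2(0) = \pi\int_0^\tau\!\!\int_0^L \partial_t\big(-\partial_{zz}f + \nabla\calW(f)\big)\cdot(f-f^*)
	+ \big(-\partial_{zz}f + \nabla\calW(f)\big)\cdot\partial_t(f - f^*)\,dz\,dt,
\]
and treat the two pieces separately. The first piece involves only $f$, which is smooth enough by the hypotheses of Theorem~\ref{T.main} (uniform bound on $\partial_t f$, $\rho_0>0$, hence by the remarks after the theorem as much regularity as needed); this term is bounded by $C\int_0^\tau (I_1(t) + \text{l.o.t.})\,dt$ using Cauchy-Schwarz and the uniform lower bound $\rho_{f^*(t)}\ge \tfrac34\rho_0$ from \eqref{whoisbardet}, which keeps $\nabla\calW$ and its derivatives along $f$ and $f^*$ bounded.

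The substantial part is the term $\pi\int_0^\tau\!\!\int_0^L (-\partial_{zz}f + \nabla\calW(f))\cdot\partial_t(f-f^*)$. The $\partial_t f$ contribution is handled by \eqref{IVF}: since $\partial_t f = J(-\partial_{zz}f + \nabla\calW(f))$ and $J$ is antisymmetric, $(-\partial_{zz}f + \nabla\calW(f))\cdot\partial_t f = (-\partial_{zz}f + \nabla\calW(f))\cdot J(-\partial_{zz}f + \nabla\calW(f)) = 0$ pointwise, so that term simply vanishes. What remains is $-\pi\int_0^\tau\!\!\int_0^L (-\partial_{zz}f + \nabla\calW(f))\cdot\partial_t f^*\,dz\,dt$, and here I would \emph{not} try to make sense of $\partial_t f^*$ directly; instead I would integrate by parts in $t$ to move the time derivative off $f^*$:
\[
	-\pi\int_0^\tau\!\!\int_0^L A(z,t)\cdot\partial_t f^*\,dz\,dt
	= -\pi\int_0^L A\cdot f^*\Big|_0^\tau\,dz + \pi\int_0^\tau\!\!\int_0^L \partial_t A\cdot f^*\,dz\,dt,
\]
where $A := -\partial_{zz}f + \nabla\calW(f)$. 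The boundary term at $\tau$ recombines with $I_2(\tau)$-type quantities (writing $f^* = f - (f-f^*)$, the $A\cdot f$ part is smooth and the $A\cdot(f-f^*)$ part is controlled by $I_1$), and the interior term has $\partial_t A$ depending only on the smooth function $f$, paired against $f^*$ which is merely bounded in $H^1$; again splitting $f^* = f - (f - f^*)$ produces a smooth term plus something controlled by $I_1^{1/2}$. The genuinely delicate point — and the place where one must be careful — is justifying the integration by parts in $t$ and identifying the boundary contributions: $f^*$ is only known to lie in $\mathcal{C}([0,t_0],L^1)\cap L^\infty([0,t_0],H^1)$, so one cannot assume $\partial_t f^*\in L^1$ a priori. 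I expect this to be resolved exactly as the analogous step for $I_1$ in Lemma~\ref{lem:pmt2}: one works at the level of the Gross-Pitaevskii solutions $u_\ep$, uses the conservation laws and the evolution equation for the Jacobian $J_xu_\ep$ (whose structure, together with Propositions~\ref{P.improve} and \ref{P:jstar}, is what pins down the motion of $f^*$), derives the desired Grönwall-type inequality with $\ep$-dependent error terms, and passes to the limit $\ep\to 0$ using \eqref{mainthm.c1}-type convergence on $[0,t^*]$ and the weak convergence $j(v_\ep)/|v_\ep|\rightharpoonup j^*_{\R^2}(f^*)$ of Proposition~\ref{P:jstar}.

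Thus the main obstacle is not the algebra (the antisymmetry of $J$ kills the dangerous self-interaction term, and everything else is Cauchy-Schwarz plus the uniform separation $\rho_{f^*}\ge \tfrac34\rho_0$) but rather the regularity bookkeeping: one must run the whole computation at the $\ep$ level, where all quantities are smooth, express $\partial_t I_2$ through the Gross-Pitaevskii evolution and the Hamiltonian structure, absorb the error terms that vanish as $\ep\to 0$ using the compactness results of Corollary~\ref{cor:1} and Proposition~\ref{P:jstar}, and only then pass to the limit. I would organize the proof so that Lemmas~\ref{lem:pmt2} and \ref{lem:pmt3} share this common framework, with the $\ep$-level identity for $\frac{d}{dt}\int_0^L J_xu_\ep\cdot(\text{test function built from }f)$ doing the bulk of the work, and the two lemmas differing only in which moments of the Jacobian are tested.
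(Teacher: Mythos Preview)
Your overall plan is right and matches the paper: both Lemmas~\ref{lem:pmt2} and~\ref{lem:pmt3} are proved by applying a common $\ep$-level identity (Lemma~\ref{lem:ptl}, which packages the Jacobian evolution \eqref{dtJzv.int} together with the limits from Propositions~\ref{P.improve} and~\ref{P:jstar}) with different test functions. For Lemma~\ref{lem:pmt3} the paper takes $\vp(x,z,t)=\chi_{\rho_0/4}(|x-f_k|)\,A_k\cdot(f_k-x)$ where $A:=-\partial_{zz}f+\nabla\calW(f)$, and your antisymmetry observation $A\cdot\partial_t f=A\cdot A^\perp=0$ is indeed one of the cancellations used (it kills the $A_k\cdot\partial_t f_k$ piece of $\partial_t\vp|_{x=f^*_k}$).

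There is, however, a genuine gap in your treatment of the ``first piece'' $\int_0^\tau\int_0^L\partial_t A\cdot(f-f^*)$. On its own this contains $-\int\partial_{tzz}f\cdot(f-f^*)$, which Cauchy--Schwarz bounds only by $O(I_1^{1/2})$; that does not close the Gr\"onwall loop in the proof of Theorem~\ref{T.main}. The repair is a \emph{second} cancellation you have not identified. The output of Lemma~\ref{lem:ptl} also contains the term $-\int\nabla^\perp\partial_z\vp(f^*_k)\cdot\partial_z f^*_k$, coming from \eqref{improve2}; with the above $\vp$ this equals $\int\partial_z(A_k^\perp)\cdot\partial_z f^*_k=\int\partial_{tz}f_k\cdot\partial_z f^*_k=-\int\partial_{tzz}f_k\cdot f^*_k$, which exactly cancels the $+\partial_{tzz}f_k\cdot f^*_k$ part of $\partial_t A_k\cdot(f_k-f^*_k)$. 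After both cancellations and one more use of \eqref{IVF} to eliminate $\partial_{zz}f$, what survives is
\[
\int_0^\tau\!\!\int_0^L \partial_t f_j\cdot\Big[\nabla_j\calW(f^*)-\nabla_j\calW(f)-\nabla_k\nabla_j\calW(f)\,(f^*-f)_k\Big]\,dz\,dt,
\]
a \emph{second-order} Taylor remainder of $\nabla\calW$, hence $O(|f-f^*|^2)$ thanks to \eqref{whoisbardet}; this is what produces the required $O(I_1)$ bound. Note also that Lemma~\ref{lem:ptl} never produces $\partial_t f^*$: it outputs $\partial_z f^*$ and $\nabla^\perp\calW(f^*)$ instead, so your integration-by-parts-in-$t$ detour is unnecessary.
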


With these, we can complete the 

\begin{proof}[{\rm \bf Proof of Theorem \ref{T.main}}]
Let $I_4(t) = I_2(t)+C_2I_1(t)$. It follows from Lemma  \ref{lem:pmt1} that
	$I_4(t)\ge 0$ for all $t\in [0,t^*]$, and Lemmas \ref{lem:pmt1} --
\ref{lem:pmt3} imply that
\[
	I_4(\tau) \le  C\int_0^\tau I_4(t)\, dt 
	\qquad\mbox{ for all }\tau \in [0,t^*].
\]
It follows by Gr\"onwall's inequality that $I_4(\tau) = 0$ for all $\tau \in
	[0,t^*]$.  Thus by Lemma \ref{lem:pmt1}, we have that $I_3=0$ on $[0,t^*]$.
	Then from Lemma \ref{lem:pmt2} and another invocation of Gr\"onwall, we
	conclude that $I_1=0$ on $[0,t^*]$, in other words, that $f=f^*$ on $[0,
	t^*]$. A straightforward continuation argument now  shows that this equality
	holds on $(0,T)$, and then by reversibility on $(-T,T)$, thus completing the
	proof.
\end{proof}

\section{Dynamics}\label{sec:dynamics}

The object of this section is to present the proofs of
Lemma \ref{lem:pmt2} and Lemma \ref{lem:pmt3}, from which (together 
with Lemma \ref{lem:pmt1}) our main Theorem was
derived in the Introduction. We will find it useful to rescale the
Gross-Pitaevskii equation \eqref{gp}, setting
\begin{equation}
\ve(x,z,t) := \ue(h_\ep x, z, h_\ep^2 t),
\label{rescale.witht}\end{equation}
where
\[
h_\ep := \logeps^{-1/2}.
\]
Thus
\begin{equation}\label{gpv}
i \partial_t\ve - \Delta_x \ve - \frac{\partial_{zz}\ve}{\logeps} + \frac
{1}{\logeps\ep^2}(|\ve|^2-1)\ve = 0.
\end{equation}

We will write
\begin{align*}
j_x\ve &:= i\ve\cdot \nabla_x\ve,\\
j_z\ve &:= i\ve\cdot \partial_z\ve.
\end{align*}
For the rescaled equation \eqref{gpv}, the equation for conservation of mass
takes the form
\begin{align}
\label{massv}
	\frac 12 \partial_t |\ve|^2 
	&=  \nabla_x \cdot  j_x \ve + \hep^2\, \partial_z j_z\ve. 
\end{align}
We will rely mainly on the equation for vorticity, and in fact only for the $z$
component of the vorticity vector, which is precisely $J_x v_\ep$. By rescaling
standard identities we have
\[
\partial_t J_x\ve = 
 \ep_{ab} \partial_{a c}( \partial_b\ve \cdot \partial_c \ve)
+ 
\ep_{ab} \partial_{a z} (\frac {\partial_b\ve \cdot \partial_z \ve}\logeps ).
\]
Thus,
\begin{align}\label{dtJzv.int}
	\frac d{dt}\int \vp J_x\ve dx\,dz
	&=
	\int \partial_t \vp \, J_x v_\ep dx\, dz
	+
	\int \ep_{ab}\partial_{ac} \vp  \  \partial_b\ve \cdot \partial_c\ve \,dx\,dz \\
	&\hspace{3em}+
	\int \ep_{ab}\partial_{az}\vp  \frac{\partial_b \ve\cdot 
	\partial_z \ve}\logeps \,dx\,dz,
	\nonumber
\end{align}
for smooth $\vp:\Omega_\ep\times(0,T)\to \R$ for some $T>0$, with compact
support in $\Omega_\ep = \omega_\ep \times \TL$. (That is, test functions are
only required to have compact support with respect to the horizontal $x$
variables, not the periodic $z$ variable.) 

\begin{lemma}\label{lem:ptl}
Assume that  $\vp \in C^2_c(\Omega_\ep\times [0,t^*] )$ is a function such that 
for some $k\in \{1,\ldots, n\}$,
\[
	\mbox{supp}(\vp) \subset
	\{ (x,z,t) : |x-f_k(z,t)|\le \frac {\rho_0}2\},
\]
and
\begin{equation}\label{dac}
	\partial_{ac}\vp(x,z,t) 
	= c(z,t)\delta_{ac}\qquad\mbox{ in }\{ (z,t) : |x-f_k(z,t)|\le \frac {\rho_0}4\}
\end{equation}
for some continuous $c(z,t)$. Assume also that 
\begin{equation}\label{helpful}
	\sup_t \sup_z \| \vp(\cdot, z,t)\|_{C^1(\omega_\ep)} , \ \ 
	\sup_t \sup_z \| \partial_t\vp(\cdot, z,t)\|_{C^1(\omega_\ep)} , \ \ 
	\sup_t \| \partial_z\nabla_x\vp\|_{L^\infty(\Omega_\ep)}\le C.
\end{equation}
Then for any $\tau\in [0,t^*]$, 
\[
	\begin{aligned}
	&\int_0^L\vp(f^*_k(z,t), z, t)\,dz \Big|_{t=0}^{t=\tau}\\
	&\le
	C \int_0^\tau I_3(t)\,dt + \int_0^\tau \int_0^L 
		\partial_t\vp( f^*_k(z,t), z,t)\, dz\, dt \\
	&\qquad
	-\int_0^\tau \int_0^L \nabla^\perp\partial_z\vp(f_k^*(z),z,t)
		\cdot  \partial_zf^*_k(z,t) \, dz\,dt\\
	&\qquad
	+\int_0^\tau\int_0^L\nabla\vp(f_k^*(z,t),z,t)
		\cdot \nabla_k^\perp \calW(f^*(z,t)) \, dz\,dt\ .
	\end{aligned}
\]
\end{lemma}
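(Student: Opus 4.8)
The plan is to test the rescaled vorticity identity \eqref{dtJzv.int} with the given $\varphi$, integrate in time, and then identify each term in the $\ep\to 0$ limit using the structural results of Propositions \ref{P:CJ}, \ref{P.improve}, \ref{P:jstar} and Corollary \ref{cor:1}. Concretely, I would start from
\[
	\int_0^L \varphi(\cdot,z,\tau) J_x v_\ep(\cdot,z,\tau)\,dz
	\Big|_{t=0}^{t=\tau}
	= \int_0^\tau \frac{d}{dt}\int \varphi\, J_x v_\ep\,dx\,dz\, dt,
\]
and expand the right-hand side via \eqref{dtJzv.int} into three terms: the $\partial_t\varphi$ term, the "horizontal Hessian" term $\int \ep_{ab}\partial_{ac}\varphi\;\partial_b v_\ep\cdot\partial_c v_\ep$, and the "$z$-derivative" term $\frac{1}{\logeps}\int \ep_{ab}\partial_{az}\varphi\;\partial_b v_\ep\cdot\partial_z v_\ep$. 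On the left-hand side, concentration of the Jacobian (\eqref{comp} upgraded in time by Corollary \ref{cor:1}) gives $\int_0^L \varphi(\cdot,z,\tau)J_xv_\ep(\cdot,z,\tau)\,dz \to \pi\int_0^L \varphi(f^*_k(z,\tau),z,\tau)\,dz$, using that $\varphi$ is supported near the single filament $f_k$ and that the contributions of the other filaments vanish in the limit; the same applies to the $\partial_t\varphi$ term on the right, yielding $\int_0^\tau\int_0^L\partial_t\varphi(f^*_k(z,t),z,t)\,dz\,dt$.

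The heart of the argument is the limiting identification of the two quadratic terms. For the Hessian term, on the region where $|x-f_k(z,t)|\le\rho_0/4$ we have $\partial_{ac}\varphi = c(z,t)\delta_{ac}$ by \eqref{dac}, so there $\ep_{ab}\partial_{ac}\varphi\;\partial_b v_\ep\cdot\partial_c v_\ep = c(z,t)\,\ep_{ac}\,\partial_a v_\ep\cdot\partial_c v_\ep = 0$ by antisymmetry; hence only the annular region $\rho_0/4 < |x-f_k|\le\rho_0/2$ contributes. There $v_\ep$ is energetically close to the model field: by Proposition \ref{prop:3} (applicable since $I_3(t)\le\frac12$ on $[0,t^*]$ and the hypotheses of Corollary \ref{cor:1} hold), $\frac{j v_\ep}{|v_\ep|}$ is $L^2$-close to $j^*_\omega(h_\ep f^*)$ with error controlled by $I_3(t)+\delta$, and $|v_\ep|\to 1$; combined with Proposition \ref{P:jstar} (which upgrades this to weak $L^2$ convergence to $j^*_{\R^2}(f^*)$ away from the filaments, after the horizontal rescaling), one identifies the limit of $\frac{1}{\pi}\int_{\text{annulus}}\ep_{ab}\partial_{ac}\varphi\;\partial_b v_\ep\cdot\partial_c v_\ep$ as the same expression with $v_\ep$ replaced by the explicit $j^*_{\R^2}(f^*)$ field. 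A by-now standard computation — integrate by parts in the annulus, using that $j^*_{\R^2}$ is curl-free and divergence-free there, and that its singular part at $f_k$ is $\frac{(x-f_k)^\perp}{|x-f_k|^2}$ while the regular part at $f_k$ is $\sum_{l\ne k}\frac{(f_k-f_l)^\perp}{|f_k-f_l|^2} = \frac{1}{2}\nabla_k^\perp\calW(f^*)$ — produces exactly $\pi\int_0^L\nabla\varphi(f^*_k(z,t),z,t)\cdot\nabla_k^\perp\calW(f^*(z,t))\,dz$, together with the error $C\int_0^\tau I_3(t)\,dt$ from the $L^2$ discrepancy estimate of Proposition \ref{prop:3}. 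For the $\frac1\logeps$-weighted mixed term, the gradient of the weight $\partial_z\nabla_x\varphi$ is bounded by \eqref{helpful}, and Proposition \ref{P.improve}, equation \eqref{improve2}, gives $\frac{1}{\logeps}\nabla_x v_\ep\cdot\partial_z v_\ep \rightharpoonup -\pi\sum_i\partial_z f^*_i(z)\,\delta_{f^*_i(z)}\otimes dz$ in the (time-dependent) weak-measure sense; pairing against $\ep_{ab}\partial_{az}\varphi$ and keeping only the $i=k$ term (the others being outside the support) gives $-\pi\int_0^\tau\int_0^L \ep_{ab}\partial_{az}\varphi(f^*_k,z,t)\,(\partial_z f^*_k)_b\,dz\,dt = -\pi\int_0^\tau\int_0^L\nabla^\perp\partial_z\varphi(f^*_k,z,t)\cdot\partial_z f^*_k\,dz\,dt$.

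The main obstacle is making the passage to the limit in the \emph{time-integrated} quadratic terms rigorous: Propositions \ref{P.improve} and \ref{P:jstar} are stated for fixed-time (or for $O\subset\subset$ the complement of the filament tube in space-time), so I need to either invoke a version with the time variable included as a parameter — which is exactly the content of Corollary \ref{cor:1} and Proposition \ref{P:jstar}, whose hypotheses I have verified hold on $[0,t^*]$ — or justify interchanging $\int_0^\tau dt$ with the weak limit, using the uniform-in-$t$ bounds $G_\ep(u_\ep(t))\le G_0(f^*)+1$ and the $\partial_z v_\ep$ bound \eqref{dzu2} together with dominated convergence. A secondary technical point is the annular integration-by-parts: one must control the boundary terms on $\{|x-f_k|=\rho_0/4\}$ and $\{|x-f_k|=\rho_0/2\}$, but since $j^*_{\R^2}(f^*)$ is smooth there and $\varphi$ has the prescribed Hessian structure, these are harmless and in fact the computation is cleanest if one integrates by parts against the \emph{full} (model) field on the whole ball $|x-f_k|\le\rho_0/2$ after subtracting the trivial contribution from the inner disk, picking up only the point-mass at $f_k$ and a boundary term at $\rho_0/2$ that is absorbed into the error since $\varphi$ vanishes on the outer part of its support. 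Finally, everything that is "$\le$" rather than "$=$" in the statement comes from keeping Proposition \ref{prop:3}'s one-sided energy-discrepancy bound on the annular region (a genuine inequality, since we only control $e_\ep(|v_\ep|)$ and the $L^2$-distance of $jv_\ep/|v_\ep|$ from the model from above), which is why the conclusion is an inequality with the $C\int_0^\tau I_3(t)\,dt$ buffer.
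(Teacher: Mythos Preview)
Your proposal is correct and follows essentially the same route as the paper's proof: test \eqref{dtJzv.int} with $\varphi$, integrate in $t$, pass to the limit term by term using Jacobian concentration (Corollary~\ref{cor:1}) for the boundary and $\partial_t\varphi$ terms, \eqref{improve2} for the mixed $\partial_b v_\ep\cdot\partial_z v_\ep$ term, and on the annulus the decomposition $\partial_b v_\ep\cdot\partial_c v_\ep=\partial_b|v_\ep|\,\partial_c|v_\ep|+\frac{j_b j_c}{|v_\ep|^2}$ together with Proposition~\ref{prop:3} (for the $I_3$ error), Proposition~\ref{P:jstar} (for the cross terms), and the classical stress-tensor integration by parts for the $j^*j^*$ piece. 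Your identification of the time-integration issue and its resolution via the uniform-in-$t$ bounds plus dominated convergence also matches the paper; the only slips are cosmetic (a sign in the relation $\widetilde j(f_k^*;k)=-\tfrac12\nabla_k^\perp\calW(f^*)$ and a stray $1/\pi$).
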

\begin{proof}
We apply \eqref{dtJzv.int} to $\vp$, integrate both sides from $0$ to $\tau$,
	and send $\ep \to 0$.  We consider the various terms that arise. 

{\bf 1}.
Assumption \eqref{mainthm.h1} and properties of the support of $\vp$
imply that
\begin{equation}\label{lim1}
	\lim_{\ep\to 0}  \int_{\Omega_\ep} \vp(x,z,t) \, J_x v_\ep(x,z,t) dx \, dz =
	\pi\int_0^L\vp(f^*_k(z,t), z, t)\,dz
\end{equation}
for every $t\in [0,t^*]$, and in particular for $t=0,\tau$.

{\bf 2}.
Similarly, \eqref{lim1} holds with $\vp$ replaced by $\partial_t\vp$. In
addition, it follows from \eqref{helpful} and \eqref{eq:bardet3} that
$|\int_{\Omega_\ep} \vp_t(x,z,t) \, J_x v_\ep(x,z,t) dx \, dz|$ is bounded
uniformly in $t$.  Thus 
\[
	\lim_{\ep \to 0} \int_0^\tau\int_{\Omega_\ep}\partial_t \vp 
	\, J_x v_\ep \,dx\,dz\,dt
	= 
	\pi \int_0^\tau \int_0^L \vp(f_k^*(z,t), z,t) \, dz\,dt.
\]

{\bf 3}. 
The last term on the right-hand side of \eqref{dtJzv.int} is similar. First note
	that there exists some $C$ such that 
\[
	\int_{\Omega_\ep}
	\ep_{ab}\partial_{az}\vp  \frac{\partial_b \ve\cdot \partial_z \ve}\logeps \,dx\,dz\
	\le C
\]
for every $t$. This is a consequence of  \eqref{dzu2} (which is available for
	all $t\in [0,t^*]$ by Corollary \ref{cor:1}) and \eqref{mainthm.h2}, since
\[
	\int_{\Omega_\ep}|\partial_z v_\ep(y,z,t)|^2 \, dy \,dz \  
	= \  \ \int_\Omega
	\frac{ |\partial_z u_\ep(x,z,h_\ep^2 t)|^2 }\logeps\,dx \,dz \,
\]
and
$\int_{\Omega_\ep}\frac 12 |\nabla_x v_\ep(y,z,t)|^2 \, dy \le 
	G_\ep(u_\ep(\cdot, \cdot, h_\ep^2 t)) = G_\ep(u_\ep^0)$.
Also, 
\[
	\int_{\Omega_\ep}
	\ep_{ab}\partial_{az}\vp  \frac{\partial_b 
	\ve\cdot \partial_z \ve}\logeps \,dx\,dz
	\to -\pi \int_0^L \nabla^\perp\partial_z\vp(f_k^*(z),z,t)
	\cdot  \partial_zf^*_k(z,t) \, dz
\] 
for every $t$, due to \eqref{improve2}. It follows that
\[
	\int_0^\tau\!\!\int_{\Omega_\ep}
	\ep_{ab}\partial_{az}\vp  \frac{\partial_b 
	\ve\cdot \partial_z \ve}\logeps \,dx\,dz\,dt
	\to
	-\pi\int_0^\tau\!\! 
	\int_0^L \nabla^\perp\partial_z\vp(f_k^*(z),z,t)
	\cdot  \partial_zf^*_k(z,t) \, dz\,dt.
\]

{\bf 4}.
To describe the limit of the remaining term coming from \eqref{dtJzv.int}, first
note that \eqref{dac}, together with our assumptions on the support of $\vp$,
implies that 
\[
	\mbox{supp}(\ep_{ab}\partial_{ac}\vp \partial_b v_\ep 
	\cdot \partial_c v_\ep)(\cdot ,t)
	\subset \Omega_{\ep,k}(t)  
	:=
	\{ (x,z)\in \Omega_\ep : \ |x - f_k(z,t)| 
	\in [\frac {\rho_0}4 ,\frac{\rho_0}2] \} .
\]

Next, we follow standard arguments and write
\[
	\partial_b v_\ep \cdot \partial_c v_\ep 
	= \partial_c|v_\ep| \, \partial_c|v_\ep| + 
	\frac {j_b(v_\ep) j_c(v_\ep)}{|v_\ep|^2}.
\]
For the rest of this proof we will  write $j^*_\ep$ as an abbreviation for
$j^*_{\omega_\ep}(f^*)$, and $j^* := \lim_{\ep\to 0} j^*_\ep = j^*_{\R^2}(f^*)$.
With this notation, we further decompose the last term above as 
\begin{multline*}
	\frac {j_b(v_\ep) j_c(v_\ep)}{|v_\ep|^2}
	=
	j^*_{\ep,b} \  j^*_{\ep,c}+
	\left(\frac {j(v_\ep)}{|v_\ep|} - j^*_{\ep}\right)_b
	\left(\frac {j(v_\ep)}{|v_\ep|} - j^*_{\ep}\right)_c \\
	+j^*_{\ep,b}\left(\frac {j(v_\ep)}{|v_\ep|} - j^*_{\ep}\right)_c
	+j^*_{\ep,c}\left(\frac {j(v_\ep)}{|v_\ep|} - j^*_{\ep}\right)_b.
\end{multline*}
Thus,
\begin{align*}
	\int_0^\tau\!\!\int_{\Omega_\ep} \ep_{ab}\partial_{ac}\vp\, 
	& \partial_b v_\ep \cdot \partial_c v_\ep \,dz\,dz\,dt 
	\le \int_0^\tau\!\!\int_{ \Omega_{\ep,k}(t) }\ep_{ab}
	\partial_{ac}\vp \, j^*_{\ep,b}  j^*_{\ep,c}  \\
	& +\int_0^\tau\!\!\int_{ \Omega_{\ep,k}(t)  }\ep_{ab}\partial_{ac}\vp\,
	\left[ j^*_{\ep,b} \left(\frac {j(v_\ep)}{|v_\ep|} - j^*_{\ep}\right)_c
	+ j^*_{\ep,c}\left(\frac {j(v_\ep)}{|v_\ep|} - j^*_{\ep}\right)_b \right] \\
	& + \int_0^\tau\!\!\int_{\Omega_{\ep,k}(t) } |\nabla_x^2\vp|\,
	\left(|\nabla_x |v_\ep||^2 +
	\left|\frac {j(v_\ep)}{|v_\ep|} - j^*_{\ep}\right|^2\right).
\end{align*}
It follows from Proposition \ref{P:jstar} that the second term on the right-hand
side converges to $0$ as $\ep\to 0$. 

Using  Proposition \ref{prop:3} for a sequence $\delta_n\to 0$ and recalling
that $\Sigma_\ep(t)\to I_3(t)$ as $\ep \to 0$, we find that
\[
	\limsup_{\ep\to 0}\int_0^\tau\int_{\Omega_\ep}  
	|\nabla_x^2\vp|\left(|\nabla_x |v_\ep||^2 +
	\left|\frac {j(v_\ep)}{|v_\ep|} - j^*_{\ep} \right|^2\right) 
	\le C \int_0^\tau I_3(t)\,dt.
\]
Since $j_\ep^* \to j^*$ locally uniformly on $\R^2$, it is clear that
\[
	\int_0^\tau\int_{ \Omega_{\ep,k}(t) }\ep_{ab}
	\partial_{ac}\vp \, j^*_{\ep,b}  j^*_{\ep,c}  
	\to
	\int_0^\tau\int_{ \Omega_{\ep,k}(t) }\ep_{ab}
	\partial_{ac}\vp \, j^*_{b}  j^*_{c}  
\]
as $\ep \to 0$. Finally, we claim that
\[
	\int_0^\tau\int_{ \Omega_{\ep,k}(t) }\ep_{ab}
	\partial_{ac}\vp \, j^*_{b}  j^*_{c}  
	=
	\pi \int_0^\tau\int_0^L\nabla\vp(f_k^*,z,t))
	\cdot \nabla_k^\perp \calW(f^*(z,t)) \, dz\,dt.
\]
This is a small variant of a classical fact. We recall the proof for the
reader's convenience.  First note that for every $t$ and every $z\in (0,L)$, 
\[
	\int_{ \{ x\in \omega : |x-f_k(z,t)|\in[ \frac{\rho_0}4, \frac{\rho_0}{2}]\}}
	\ep_{ab}\partial_{ac}\vp \, j^*_{b}  j^*_{c}  \, dx
	=
	\lim_{s\to 0^+}
	\int_{\omega \setminus B_s(f_k(z,t))}\ep_{ab}
	\partial_{ac}\vp \, j^*_{b}  j^*_{c}  \,dx
\]
(where all integrands are evaluated at the fixed value of $t$). Indeed, the
right-hand side is independent of $s$ for $0<s<\rho_0/4$, since the integrand
vanishes identically in $B_{\rho_0/4}(f_k(z,t))$. For every $s<\rho_0/4$,
\begin{align}
	\int_{\omega \setminus B_s(f_k(z,t))}
	\ep_{ab}\partial_{ac}\vp \, j^*_{b}  j^*_{c}  \,dx
	&= 
	\int_{\omega \setminus B_s(f_k(z,t))}
	\ep_{ab}\partial_{ac}\vp \,( j^*_{b}  j^*_{c} 
	- \frac 12 \delta_{bc}|j^*|^2) \,dx \nonumber \\
	&= 
	-\int_{\partial B_{s}(f_k(z,t))} 
	\ep_{ab}\partial_a\vp \,( j^*_{b}  j^*_{c} 
	- \frac 12 \delta_{bc}|j^*|^2)\nu_c \nonumber \\
	&=  
	-\int_{\partial B_{s}(f_k(z,t))}(\nabla^\perp\vp \cdot j^*)(\nu \cdot j^*)
	- \frac 12 \nabla^\perp \vp\cdot\nu |j^*|^2.
	\label{jj.ibyp}
\end{align}
Note that
\[
	j^*(x,z,t) = \frac {(x- f_k(z,t))^\perp}{|x-f_k(z,t)|^2}
	+\widetilde j(x;k),\quad
	\mbox{ where } \widetilde j(x;k) = \sum_{\ell \ne k}
	\frac {(x- f_\ell (z,t))^\perp}{|x-f_\ell(z,t)|^2}.
\]
We decompose $j^*$ in this way on the right-hand side of \eqref{jj.ibyp}, then
expand and let $s$ tend to zero. This leads to 
\[
	\int_{ \{ x\in \omega : |x-f_k(z,t)|\in[ \frac{\rho_0}4, \frac{\rho_0}2]\}}
	\ep_{ab}\partial_{ac}\vp \, j^*_{b}  j^*_{c}  \, dx
	= -2\pi\nabla\vp(f_k^*(z,t),z,t)\cdot \widetilde j(f_k^*(z,t);k).
\]
Since
\[
	\nabla_k^\perp \calW(a) 
	:= -2\sum_{\ell\ne k} \frac{(a_k-a_\ell)^\perp}{|a_k-a_\ell|^2} = 
	-2\widetilde j(f_k^*(z,t);k), 
\]
this implies the claim, and the proof of Lemma \ref{lem:ptl} is completed.
\end{proof}

\begin{proof}[Proof of Lemma \ref{lem:pmt2}]
We apply Lemma \ref{lem:ptl} with
\[
	\vp(x,z,t)  = \chi_{\rho_0/4}(|x-f_k(z,t)|)\; |x - f_k(z,t)|^2,
\]
the bounds \eqref{helpful} being consequences of our assumptions on $\rho_0$
	and $\partial_t f$ in Theorem \ref{T.main}, and then sum the resulting 
inequalities over $k$. This leads to the estimate
\begin{align*}
	I_1(\tau) \le I_1(0)  &
	+ \int_0^\tau\int_0^L (f -f^*)\cdot \partial_t f 
	+ \partial_z f^\perp \cdot \partial_z f^* \ dz\, dt\\
	&-
	\int_0^\tau\int_0^L  (f-f^*)\cdot \nabla^\perp \calW(f^*) \ dz\, dt 
	+ C\int_0^\tau I_3(t)\,dt.
\end{align*}
The equation \eqref{IVF} satisfied by $f$ may be written
\[
	\partial_t f^\perp = \partial_{zz} f - \nabla \calW(f).
\]
Substituting this into the above inequality and integrating by parts, we obtain
\[
	I_1(\tau) \le I_1(0)  +
	\int_0^\tau\int_0^L  (f-f^*)\cdot(\nabla^\perp \calW(f) 
	-  \nabla^\perp \calW(f^*)) \ dz\, dt
	+ C\int_0^\tau I_3(t)\,dt.
\]
It follows from the definition of $t_0$ that
\[
	|\nabla^\perp \calW(f) -  \nabla^\perp \calW(f^*)| \le C |f-f^*|,
\]
and the conclusion follows immediately.
\end{proof}

\begin{proof}[Proof of Lemma \ref{lem:pmt3}]
We apply Lemma \ref{lem:ptl} with
\[
	\vp(x,z,t)  = \chi_{\rho_0/4}(|x-f_k(z,t)|)
	\left(-\partial_{zz}f_k(z,t) + \nabla_k \calW(f(z,t) \right) \cdot (f(z, t)
	- x)_k,
\]
the bounds \eqref{helpful} following once more from our assumtions in Theorem
\ref{T.main}, and then (implicitly) sum the resulting inequalities over $k$.
This leads to the estimate
\begin{align*}
	I_2(\tau) \le I_2(0)  &
	+ \int_0^\tau\int_0^L  \partial_t\left(-\partial_{zz}f_k 
	+ \nabla_k \calW(f )\right) \cdot (f - f^*)_k  \ dz\, dt\\
	&
	+ \int_0^\tau\int_0^L\partial_z \left(-\partial_{zz}f_k 
	+ \nabla_k \calW(f) \right)^\perp \cdot \partial_z f_k^* \ dz\, dt\\
	&+
	\int_0^\tau\int_0^L \left(\partial_{zz}f_k 
	- \nabla_k \calW(f) \right) \cdot \nabla_k^\perp \calW(f^*) \ dz\, dt + C\int_0^\tau I_3(t)\,dt.
\end{align*}
The middle integral on the right-hand side can be rewritten
\[
	\int_0^\tau\int_0^L\partial_z \partial_t f_k \cdot \partial_z f_k^* \ dz\, dt
	= 
	-\int_0^\tau\int_0^L\partial_{tzz}f_k \cdot  f_k^* \ dz\, dt,
\]
and hence cancels out part of the first integral. We then integrate by parts and expand
$\partial_t\nabla_k\calW(f)$ to obtain
\begin{align*}
	I_2(\tau)\le  I_2(0)  &
	+ \int_0^\tau\int_0^L  \partial_t f_j \cdot \partial_{zz}f_j
	\ dz\, dt\\
	&
	+ \int_0^\tau\int_0^L \partial_t f_j \cdot   \nabla_j\nabla_k\calW(f) 
	\cdot( f - f^*)_k  \ dz\, dt\\
	&+
	\int_0^\tau\int_0^L \left(\partial_{zz}f_k - \nabla_k \calW(f) \right) 
	\cdot \nabla_k^\perp \calW(f^*) \ dz\, dt + C\int_0^\tau I_3(t)\,dt  .
\end{align*}
Using the PDE \eqref{IVF} to eliminate $\partial_{zz}f$, we rewrite this as
\begin{align*}
	I_2(\tau) &\le I_2(0) + C\int_0^\tau I_3(t)\,dt\\
	&\qquad +\int_0^\tau\int_0^L \partial_t f_j\cdot\left[\nabla_j\calW(f^*) 
	- \nabla_j\calW(f) - \nabla_k\nabla_j\calW(f) \cdot (f^*-f)_k 
	\right] dz\,dt.
\end{align*}
Finally, it follows from the definition of $t_0$ that 
\[
\left|\nabla_j\calW(f^*) - \nabla_j\calW(f) - \nabla_k\nabla_j\calW(f) \cdot (f^*-f)_k 
\right| \le C|f^*-f|^2.
\]
The conclusion of the lemma follows immediately.
\end{proof}

\section{Proofs of variational results}\label{App}

In this section we present the proofs of Propositions  \ref{P:CJ}, \ref{P.improve}, 
and \ref{prop:3}. 

\subsection{Tools}

We start by assembling some tools that give information about the vortex structure of a function
satisfying \eqref{CJ.h1}, \eqref{CJ.h2} for small but fixed $\ep>0$, rather than in the limit
$\ep\to 0$. All of these  are established in \cite{CJ}, but in some cases our presentation here
differs a little. We therefore give short proofs that sketch  the arguments needed to obtain the
precise statements given here from those in \cite{CJ}. 

Our first result of this sort states that under assumptions \eqref{CJ.h1}, \eqref{CJ.h2}, for {\em
every} $z\in (0,L)$, if $\ep$ is small enough then  $u_\ep(\cdot ,z)$ has either $n$  distinct,
well-localized vortices clustered near the vertical axis, or a certain amount of ``extra energy". 
We will write
\[
e_\ep^{2d}(u)
:=
\frac 12 |\nabla_x\ue|^2  + \frac {1} {4\ep^2}(|\ue|^2-1)^2
\]
the Ginzburg-Landau energy density with respect to horizontal variables. 

\begin{lemma}
Assume that $u_\ep \in H^1(\Omega,\C)$ satisfies \eqref{CJ.h1} and \eqref{CJ.h2}.

There exist positive numbers $\theta, a,b, C$ and $\ep_0$ depending on $n, c_1,c_2$ such that $b<a$,
and if $0<\ep<\ep_0$, then for every $z\in (0,L)$ such that 
\begin{equation}\label{L3b.h2}
	\int_{\omega\times \{z\}} e^{2d}_\ep (u_\ep) \,dx \le  \pi(n+\theta)\logeps \ ,
\end{equation}
there exist $g^\ep_j(z)\in\R^2$ for $j = 1,\ldots, n$  such that 
\begin{equation}
	\| J_x u_\ep(\cdot, z) - \pi \sum_{j=1}^n \delta_{g^\ep_j(z)}\|_{F(\omega)} \le \ep^a \ ,
	\label{L3b.c1}
\end{equation}
\begin{equation}
	|g^\ep_j (z)- g^\ep_k(z)| \ge \ep^b
	\ \ \mbox{ for all }j\ne k, \ 
	\qquad\mbox{dist}(g^\ep_j(z), \partial \omega) \ge C^{-1} \ \ \mbox{ for all }j,
	\label{L3b.c2}
\end{equation}
\begin{equation}
	|g^\ep_j(z)| \le C h_\ep \ \  \mbox{ for all }j,
	\label{L3b.c2bis}
\end{equation}
\begin{equation}
	\int_{\omega\times \{z\}} e^{2d}_\ep(w) dx \ \ge \  
	n(\pi\vert \log\ep \vert +\gamma) +  W_{\omega}(g^\ep_1(z),\ldots, g^\ep_n(z))
	- C\ep^{(a-b)/2},  \ 
	\label{L3a.sharper}
\end{equation}
where $W_\omega$ is the renormalized energy defined in Section \ref{sec:furnot}. 
\label{L3b}\end{lemma}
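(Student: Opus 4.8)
\textbf{Proof plan for Lemma \ref{L3b}.}

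The plan is to deduce this slice-wise statement from the ($z$-independent) results of \cite{CJ}, so the heart of the matter is to show that the two-dimensional energy bound \eqref{L3b.h2} is available on a large set of slices, and that on each such slice one may invoke the standard Ginzburg--Landau vortex-ball / lower-bound machinery. First I would integrate \eqref{CJ.h2} over $z$: writing $G_\ep(u_\ep)=\int_0^L \big(\int_{\omega\times\{z\}} e_\ep^{2d}(u_\ep)\,dx + \tfrac14\int|\partial_z u_\ep|^2\,dx\big)dz - L\kappa_n(\omega)$ with $\kappa_n(\omega)=n\pi\logeps + n(n-1)\pi|\log h_\ep|+O(1)$, one sees that the average over $z\in(0,L)$ of $\int_{\omega\times\{z\}}e_\ep^{2d}(u_\ep)\,dx$ is $n\pi\logeps + o(\logeps)$; this is the quantitative content that makes the hypothesis \eqref{L3b.h2} meaningful --- it is only an ``excess energy'' bound, allowing an overshoot by $\theta\pi\logeps$, so it is \emph{not} automatic on every slice but the lemma is conditional on \eqref{L3b.h2} holding. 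So for a fixed $z$ satisfying \eqref{L3b.h2} I would work purely in two dimensions with the function $u_\ep(\cdot,z)$ on $\omega$.

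Next, on such a slice I would apply the standard vortex structure theorem (Jerrard--Soner / Sandier, as packaged in \cite{CJ}): the bound $\int_{\omega\times\{z\}}e_\ep^{2d}(u_\ep)\le \pi(n+\theta)\logeps$ together with the $W^{-1,1}$-closeness \eqref{CJ.h1} to $n\pi\delta_0$ (which by integrating in $z$ one can arrange to hold, up to a small-measure exceptional set of slices, with an $O(h_\ep)$ bound on the slice --- again this is where one uses the averaging) forces $Ju_\ep(\cdot,z)$ to be $\ep^a$-close in the flat norm $F(\omega)$ to a sum of exactly $n$ unit-degree vortices $\pi\sum_j\delta_{g_j^\ep(z)}$, with mutual separation at least $\ep^b$ and distance to $\partial\omega$ bounded below; this gives \eqref{L3b.c1}--\eqref{L3b.c2}. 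The localization \eqref{L3b.c2bis}, namely $|g_j^\ep(z)|\le Ch_\ep$, then follows by combining the flat-norm closeness to $n\pi\delta_0$ (on the good slices) with the identity \eqref{BCL} for the $W^{-1,1}$ distance between sums of Diracs: $\min_\sigma\sum_j|g_j^\ep(z)-0|$ is controlled by $\|Ju_\ep(\cdot,z)-n\pi\delta_0\|_{W^{-1,1}}/\pi$ up to the flat-to-$W^{-1,1}$ comparison and the $\ep^a$ error, hence is $O(h_\ep)$.

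Finally, for the sharpened lower bound \eqref{L3a.sharper} I would invoke the renormalized-energy lower bound of B\'ethuel--Brezis--H\'elein type (again available from \cite{CJ}, or from the ``$\Gamma$-liminf with renormalized energy'' estimates there): once $u_\ep(\cdot,z)$ has $n$ well-separated degree-one vortices at $g_j^\ep(z)$ with separation $\ge\ep^b$, a ball-construction on annuli $\ep\le |x-g_j^\ep|\le \ep^b$ gives a contribution $n(\pi|\log\ep|+\gamma)$ from the vortex cores plus $W_\omega(g_1^\ep(z),\dots,g_n^\ep(z))$ from the far field, with error controlled by the overlap scale, which is $O(\ep^{(a-b)/2})$ (coming from the interplay between the $\ep^a$ flat-norm error in the vortex locations and the $\ep^b$ separation). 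The main obstacle, and the only genuinely delicate point, is bookkeeping the exceptional set of slices: \eqref{L3b.h2} is assumed, but to push the $W^{-1,1}$-to-flat-norm and localization arguments through on \emph{every} such slice one must make sure that the hypothesis \eqref{CJ.h1}, which is an integral bound in $z$, can be converted into a usable pointwise-in-$z$ bound on the good slices with the correct $O(h_\ep)$ (rather than merely $o(1)$) scaling --- here one uses that on any slice where \eqref{L3b.h2} holds, the a priori vortex structure already pins $Ju_\ep(\cdot,z)$ near \emph{some} configuration of $n$ Diracs, and then \eqref{CJ.h1} is only needed in an averaged sense to identify that configuration with one clustered near the origin. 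This is exactly the kind of argument carried out in \cite{CJ}, so I would present it as a short adaptation rather than reprove it from scratch.
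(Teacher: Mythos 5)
Your proposal is in the same spirit as the paper's proof, which is essentially a direct citation of \cite{CJ}: there a set $\mathcal G^\ep_1$ of ``good slices'' is defined in equation (3.11), with the property that every slice outside $\mathcal G^\ep_1$ has two-dimensional energy at least $\ep^{-1/2}$; hence the hypothesis \eqref{L3b.h2} automatically places $z$ in $\mathcal G^\ep_1$, and on $\mathcal G^\ep_1$ Proposition~1 and Lemma~3 of \cite{CJ} deliver \eqref{L3b.c1}, \eqref{L3b.c2} and \eqref{L3a.sharper} directly, with no need to rerun a ball construction or an annular lower bound. Your first two paragraphs capture the right picture, just in a more hands-on form than the paper needs.

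The genuine gap is in your proposed argument for \eqref{L3b.c2bis}. You want to deduce $|g^\ep_j(z)|\le Ch_\ep$ on a slice satisfying \eqref{L3b.h2} by combining \eqref{CJ.h1} with the identity \eqref{BCL} on that slice. But \eqref{CJ.h1} only controls $\int_0^L\|J_xu_\ep(\cdot,z)-n\pi\delta_0\|_{W^{-1,1}(\omega)}\,dz$; on a \emph{fixed} slice $z$ there is no pointwise $O(h_\ep)$ bound, and Chebyshev would only give such a bound away from an exceptional set of slices, not for every $z$ satisfying \eqref{L3b.h2}. Your final paragraph flags this as ``the only genuinely delicate point'' but leaves the fix unspecified. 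The fix, as the paper indicates, is the compactness and equicontinuity mechanism of Lemma~\ref{labels}: the bound $\int_\Omega|\partial_z u_\ep|^2\le C$ from Lemma~\ref{C:GB} (itself a consequence of \eqref{CJ.h1}--\eqref{CJ.h2}) yields a H\"older-in-$z$ modulus of continuity for the rescaled vortex positions $f^\ep_j=g^\ep_j/h_\ep$; one chooses a nearby slice $\zeta$ where \eqref{CJ.h1} \emph{does} give a pointwise $O(h_\ep)$ bound, and transfers the conclusion to $z$. This is Step~3 of the proof of Lemma~12 in \cite{CJ}, and the paper accordingly proves \eqref{L3b.c2bis} separately, alongside Lemma~\ref{L.ep.holder}, rather than as part of the main body of the proof of Lemma~\ref{L3b}.
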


\begin{proof}[Proof of Lemma \ref{L3b}, excluding estimate \eqref{L3b.c2bis}]
Given a sequence of functions  $u_\ep\in H^1(\Omega,\C)$ satisfying \eqref{CJ.h1} and \eqref{CJ.h2},
a set $\mathcal G^\ep_1 = \mathcal G^\ep_1(u_\ep)\subset (0,L)$ is defined in  equation (3.11) of
\cite{CJ}  with the following properties. First, if $z\not\in \mathcal G^\ep_1$ then 
\[
	\int_\omega e^{2d}_\ep (u_\ep)(x,z) \,dx \ge \ep^{-1/2},
\]
for all sufficiently small $\ep$ (where ``sufficiently small" may depend on the given sequence). And
second, if $z\in \mathcal G^\ep_1$ and \eqref{L3b.h2} holds, then there exist $g^\ep_j(z)\in
\omega$, for $j=1,\dots, n$, satisfying \eqref{L3b.c1},  \eqref{L3a.sharper} and  \eqref{L3b.c2}. 
These are proved in \cite{CJ}, Proposition 1 and Lemma 3 respectively, which actually assume a
somewhat weaker condition in place of \eqref{CJ.h1}.

The conclusions of the lemma, apart from \eqref{L3b.c2bis} (proved below), follow directly from these facts.
\end{proof}

We will henceforth write
\begin{equation}
	\calG(u_\ep) 
	:= \{ z\in (0,L):  \eqref{L3b.h2}  \mbox{ holds} \},
	\qquad
	\calB(u_\ep) := (0,L)\setminus \calG(u_\ep) \ .
	\label{calG.def}
\end{equation}
Thus, for every $z\in \calG(u_\ep)$, Lemma  \ref{L3b} provides a detailed description 
of the vorticity of $u_\ep(\cdot, z)$.

For $z\in \calG(u_\ep)$ we will write
\begin{equation}
	f^\ep_j(z) := g^\ep_j(z)/h_\ep.
	\label{fep.def}
\end{equation} 
Rescaling \eqref{L3b.c1}, we find that 
$\| J_x v_\ep(\cdot, z) - \pi \sum_{j=1}^n 
\delta_{f^\ep_j(z)}\|_{W^{-1,1}(\omega_\ep)} \le \ep^a/h_\ep $,
where $v_\ep(x,z) = u_\ep(h_\ep x,z)$ as usual.

\begin{remark}
It is clear from the proof in \cite{CJ} that $z\mapsto \chi_{\mathcal G(u_\ep)} g_j^\ep(z)$ may
be taken to be measurable.
\end{remark}

We next collect some conclusions that follow rather
easily from Lemma \ref{L3b}.

\begin{lemma}\label{C:GB}
Assume that $0<\ep < 1/2$ and that  $u_\ep \in H^1(\Omega,\C)$ satisfies 
\eqref{CJ.h1} and \eqref{CJ.h2}.
Then there exists a positive constant $C = C(c_1,c_2,n)$ such that
\begin{align}
	\label{uptoconstant}
	&\int_\Omega e^{2d}_\ep(u_\ep)
	\geq n\pi L \logeps +\pi n (n-1)L |\log h_\ep|- C,\\
	\label{calB.est1}
	&|\calB(u_\ep)|
	\le C\logeps^{-1},\\
	&\int_{z\in \calB(u_\ep)}\int_\omega e_\ep^{2d}(u_\ep) \, dx\,dz \le C,
	\label{calB.est2} 
	\\
	\label{dzu2.b}
	&\int_\Omega  |\partial_z u_\ep|^2 dx\;dz \leq C.
\end{align}
\end{lemma}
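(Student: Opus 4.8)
The plan is to derive all four estimates by comparing the global energy $\int_\Omega e_\ep^{2d}(u_\ep)$ with the slicewise lower bounds furnished by Lemma \ref{L3b}, and exploiting the upper bound \eqref{CJ.h2} on $G_\ep$. First I would establish \eqref{uptoconstant}. For $z\in\calG(u_\ep)$, Lemma \ref{L3b} gives $\int_{\omega\times\{z\}}e_\ep^{2d}(u_\ep)\,dx\ge n(\pi\logeps+\gamma)+W_\omega(g_1^\ep(z),\dots,g_n^\ep(z))-C\ep^{(a-b)/2}$. Using \eqref{L3b.c2} and \eqref{L3b.c2bis} to control $W_\omega$ from below: the pairwise terms $-\pi\sum_{i\ne j}\log|g_i^\ep-g_j^\ep|$ are at least $\pi n(n-1)|\log h_\ep| - C$ once one observes $|g_i^\ep-g_j^\ep|\le C h_\ep$ (from \eqref{L3b.c2bis}), and the regular part $\sum_{i,j}H_\omega(g_i^\ep,g_j^\ep)$ is bounded since the $g_j^\ep$ stay a fixed distance from $\partial\omega$. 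For $z\in\calB(u_\ep)$ the slice energy is nonnegative (indeed $\ge\ep^{-1/2}$, but nonnegativity suffices here). Integrating over $(0,L)$ and using $|\calB(u_\ep)|\le L$ gives \eqref{uptoconstant} up to absorbing $C\ep^{(a-b)/2}L$ into the constant.

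Next, \eqref{calB.est1} and \eqref{calB.est2} follow from a pigeonhole/energy-accounting argument. Write
\[
\int_\Omega e_\ep^{2d}(u_\ep) = \int_{\calG}\int_\omega e_\ep^{2d}(u_\ep)\,dx\,dz + \int_{\calB}\int_\omega e_\ep^{2d}(u_\ep)\,dx\,dz.
\]
On $\calG$ the first integral is bounded below by $|\calG|\cdot\big(n\pi\logeps - C\big)\ge (L-|\calB|)(n\pi\logeps - C)$ using the same slice bound as above (discarding the $W_\omega$ term, which is $\ge -C$). On $\calB$, by definition of $\calB(u_\ep)$ each slice has energy $>\pi(n+\theta)\logeps$, so the second integral exceeds $|\calB|\cdot\pi(n+\theta)\logeps$. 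On the other hand \eqref{CJ.h2} together with \eqref{Gep.def} and the asymptotics of $\kappa(n,\ep,\omega)$ gives $\int_\Omega e_\ep^{2d}(u_\ep)\le \int_\Omega e_\ep(u_\ep)\le n\pi L\logeps + C$. Combining:
\[
(L-|\calB|)(n\pi\logeps - C) + |\calB|\,\pi(n+\theta)\logeps \le n\pi L\logeps + C,
\]
which rearranges to $\theta\pi|\calB|\logeps \le C$, i.e. $|\calB(u_\ep)|\le C\logeps^{-1}$, proving \eqref{calB.est1}. For \eqref{calB.est2}, subtract the $\calG$-contribution (bounded below by $n\pi L\logeps - |\calB|n\pi\logeps - CL$, and using \eqref{calB.est1} the term $|\calB|n\pi\logeps\le C$) from the global upper bound $n\pi L\logeps + C$ to conclude $\int_{\calB}\int_\omega e_\ep^{2d}(u_\ep)\le C$.

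Finally, \eqref{dzu2.b} is immediate: it is exactly \eqref{dzu2} from Proposition \ref{P:CJ}, whose hypotheses \eqref{CJ.h1}, \eqref{CJ.h2} are assumed here; alternatively one notes $\int_\Omega|\partial_z u_\ep|^2 = 2\big(\int_\Omega e_\ep(u_\ep) - \int_\Omega e_\ep^{2d}(u_\ep)\big) \le 2\big(G_\ep(u_\ep) + L\kappa(n,\ep,\omega)\big) - 2\big(n\pi L\logeps + \pi n(n-1)L|\log h_\ep| - C\big)$, and the logarithmically divergent terms cancel by \eqref{kappan.def}, leaving a bounded quantity. The main obstacle is the bookkeeping in the lower bound for $W_\omega$: one must use \eqref{L3b.c2bis} (the clustering estimate $|g_j^\ep|\le Ch_\ep$, whose proof is deferred in the excerpt) to see that the renormalized-energy interaction terms produce precisely the $+\pi n(n-1)L|\log h_\ep|$ term matching $\kappa(n,\ep,\omega)$, rather than a larger or differently-scaled contribution; once that matching is pinned down, the remaining estimates are routine energy arithmetic.
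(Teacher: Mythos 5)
Your overall strategy — slice-by-slice lower bounds from Lemma \ref{L3b} for $z\in\calG(u_\ep)$, the threshold bound for $z\in\calB(u_\ep)$, and comparison against the global upper bound coming from \eqref{CJ.h2} — is the same energy-bookkeeping route the paper takes. However, the way you track the logarithmic terms contains genuine errors, and in one place the stated argument does not close.

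First, for \eqref{uptoconstant}: you claim that on $\calB(u_\ep)$ nonnegativity of the slice energy ``suffices.'' It does not. If, say, $|\calB(u_\ep)|$ were of order $L$, then the $\calG$-contribution alone would only produce a fraction of $L\bigl(n\pi\logeps+n(n-1)\pi|\log h_\ep|\bigr)$, and \eqref{uptoconstant} would fail. You must use the defining lower bound $\int_\omega e^{2d}_\ep(u_\ep)(\cdot,z)\,dx\ge (n\pi+\theta)\logeps$ on $\calB$-slices (which dominates the $\calG$-slice bound for $\ep$ small, since $|\log h_\ep|=o(\logeps)$), or else first establish $|\calB(u_\ep)|\lesssim\logeps^{-1}$ — but you derive that only afterwards, and in a way that itself uses \eqref{uptoconstant}, so the argument as ordered is circular.

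Second, for \eqref{calB.est1}: the asserted upper bound $\int_\Omega e^{2d}_\ep(u_\ep)\le n\pi L\logeps + C$ is false. From $G_\ep(u_\ep)\le c_2$ and \eqref{kappan.def}, the correct bound is
\[
\int_\Omega e^{2d}_\ep(u_\ep) \le n\pi L\logeps + n(n-1)\pi L|\log h_\ep| + C,
\]
and the extra $n(n-1)\pi L|\log h_\ep|$ term diverges. Correspondingly, one cannot ``discard'' $W_\omega$ by writing $W_\omega\ge -C$: one must keep the matching term $W_\omega\ge \pi n(n-1)|\log h_\ep|-C$ (via \eqref{L3b.c2bis}, or via the paper's actual route through Jensen's inequality applied to $-\log|\cdot|$) so that the two $|\log h_\ep|$ contributions cancel in the rearrangement, leaving $\theta\pi|\calB(u_\ep)|\logeps\le C$. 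Your two omissions happen to offset in spirit, but the displayed inequality you write down and rearrange is not true, and the analogous bookkeeping in \eqref{calB.est2} inherits the same defect. Finally, one small remark on \eqref{dzu2.b}: citing \eqref{dzu2} from Proposition \ref{P:CJ} is circular (that proposition in \cite{CJ} comes downstream of the present lemma); fortunately your ``alternative'' direct computation, once \eqref{uptoconstant} is in hand and the cancellation of the $\logeps$ and $|\log h_\ep|$ terms against $L\kappa(n,\ep,\omega)$ is noted, is the right argument.
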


We will later improve on some of these estimates under the hypotheses of our main theorem.

\begin{proof}[Proof of Lemma \ref{C:GB}]
Conclusions \eqref{uptoconstant} and \eqref{dzu2.b} are proved in Lemma 9 of \cite{CJ}.
The proof relies on the parts of Lemma \ref{L3b} proved above, together with properties of the
renormalized energy $W_\omega$ (see Lemma 4 of \cite{CJ}) and a short argument using Jensen's
inequality. The proof also easily yields the other conclusions \eqref{calB.est1}, \eqref{calB.est2}
stated here. Indeed, the proof of Lemma 9 in \cite{CJ} actually shows\footnote{Note that the sets
$\calG^\ep_2$ and $\calB^\ep_2$ from \cite{CJ} coincide exactly with our sets $\calG(u_\ep)$ and
$\calB(u_\ep)$; compare our definitions \eqref{calG.def} with \cite{CJ}, equation (3.16).} that 
\[
	\int_{z\in \calG(u_\ep)}\int_\omega e^{2d}_\ep(u_\ep)\, dx\,dz \ge
	\Big(n \pi \logeps + n(n-1)\pi (|\log h_\ep| - C\Big)|\calG(u_\ep)|.
\]
On the other hand it is clear from the definitions that
\[
	\int_{z\in \calB(u_\ep)}\int_\omega e^{2d}_\ep(u_\ep)\, dx\,dz \ge
	(n\pi +\theta)\logeps \, |\calB(u_\ep)|.
\]
Since $e_\ep(u_\ep) = e_\ep^{2d}(u_\ep) + \frac 12 |\partial_z u_\ep|^2$ and $
|\calG(u_\ep)| + |\calB(u_\ep)|=L$, by comparing these estimates with the hypothesis \eqref{CJ.h2},
we easily obtain  \eqref{calB.est1} and \eqref{calB.est2}.
\end{proof}

We now state a result that establishes a sort of approximate equicontinuity 
of the map $z\in \calG(u_\ep)\mapsto \pi \sum \delta_{f^\ep_j(z)}$ for finite $\ep>0$.

\begin{lemma}Assume that \eqref{CJ.h1}, \eqref{CJ.h2} hold.
Then for every $\delta>0$, there exists positive constants $\ep_0, C$ such that if $0<\ep <\ep_0$,
then the following holds: 

Assume that $z_1,z_2$ are points in $\calG(u_\ep)$ such that $|z_1 - z_2| > \delta$, and let
$g_j^\ep(z_\ell)$ denote the points provided by Lemma \ref{L3b} for $\ell = 1,2$. 
Then for
$f^\ep_j(z_\ell) := g_j^\ep(z_\ell)/h_\ep$, 
\begin{equation}\label{ep.holder}
	\pi \min_{\sigma\in S_n} \sum_{j=1}^n \frac {|f_{\sigma(j)}^\ep(z_2) - f_j^\ep(z_1)|^2}{|z_2-z_1|}
	\le C.
\end{equation}
\label{L.ep.holder}
\end{lemma}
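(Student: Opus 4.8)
\textbf{Proof plan for Lemma \ref{L.ep.holder}.}

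The plan is to exploit the bound \eqref{dzu2.b} on $\int_\Omega|\partial_z u_\ep|^2$, which controls how fast the vorticity slices can move in $z$, together with the precise localization of vortices on $\calG(u_\ep)$ coming from Lemma \ref{L3b} and the $W^{-1,1}$-topology identity \eqref{BCL}. The heuristic is that $\frac{d}{dz}Ju_\ep(\cdot,z)$ is controlled, roughly, by $\int_\omega |\nabla_x u_\ep||\partial_z u_\ep|\,dx$, so that the $W^{-1,1}$-distance between slices at $z_1$ and $z_2$ is at most $\int_{z_1}^{z_2}\int_\omega |\nabla_x u_\ep||\partial_z u_\ep|\,dx\,dz$; by Cauchy-Schwarz this is bounded by $\big(\int_\Omega e_\ep^{2d}(u_\ep)\big)^{1/2}\big(\int_\Omega|\partial_z u_\ep|^2\big)^{1/2}|z_1-z_2|^{1/2}$, which however carries a factor $\logeps^{1/2}$ from the horizontal energy and is far too weak. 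The point is that one must \emph{not} integrate over all $z\in(z_1,z_2)$, but only over a sub-slab where $\int_\omega e_\ep^{2d}(u_\ep)\,dx$ is of order $\logeps$ (not larger), which is where the vorticity is genuinely $n$ well-separated unit point vortices.

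So the key steps, in order, would be: (1) Fix $\delta>0$ and $z_1,z_2\in\calG(u_\ep)$ with $|z_1-z_2|>\delta$. By \eqref{calB.est1} the bad set $\calB(u_\ep)$ has measure $O(\logeps^{-1})$, hence for $\ep$ small the interval $(z_1,z_2)$ contains a subinterval — or rather a subset $I$ of measure comparable to $|z_1-z_2|$ — consisting of good slices; moreover by \eqref{CJ.h2} and \eqref{uptoconstant} the slice energy $z\mapsto\int_\omega e_\ep^{2d}(u_\ep)\,dx$ exceeds $n\pi\logeps$ only by a total excess of $O(1)$ when integrated, so on a set of $z$'s of comparable measure this slice energy is $\le (n+\theta')\pi\logeps$ for small $\theta'$. (2) On such a good slice Lemma \ref{L3b} gives $n$ points $g_j^\ep(z)$ with \eqref{L3b.c1}, \eqref{L3b.c2}; the main work is to show that as $z$ varies continuously through the good set, the configuration $\{g_j^\ep(z)\}$ cannot jump, so there is a well-defined labelling, and that $\|Ju_\ep(\cdot,z)-Ju_\ep(\cdot,z')\|_{W^{-1,1}(\omega)}$ controls $\min_\sigma\sum|g_{\sigma(j)}^\ep(z)-g_j^\ep(z')|$ up to the $\ep^a$ errors, via \eqref{BCL} after rescaling the separation estimate \eqref{L3b.c2} (which guarantees the points stay in a small ball where \eqref{BCL} applies, using \eqref{L3b.c2bis}). (3) Estimate the $W^{-1,1}$-variation of the slice map on the good set: for a test function $\phi$ with $\|\phi\|_\infty,\|D\phi\|_\infty\le1$,
\[
\Big|\int\phi\,d\big(Ju_\ep(\cdot,z) - Ju_\ep(\cdot,z')\big)\Big|
\le \int_{z'}^{z}\int_\omega |\partial_z\big(Ju_\ep\big)|\;dx\,dz
\le C\int_{z'}^z\Big(\int_\omega |\nabla_x u_\ep|^2\,dx\Big)^{1/2}\Big(\int_\omega|\partial_z u_\ep|^2\,dx\Big)^{1/2}dz,
\]
and now on the good slab the first factor is $\le C\logeps^{1/2}$, so Cauchy–Schwarz in $z$ gives a bound $\le C\logeps^{1/2}\big(\int_\Omega|\partial_z u_\ep|^2\big)^{1/2}|z-z'|^{1/2}\le C\logeps^{1/2}|z-z'|^{1/2}$ using \eqref{dzu2.b}. (4) Combine: $\min_\sigma\sum|g_{\sigma(j)}^\ep(z_2)-g_j^\ep(z_1)|\le C\logeps^{1/2}|z_1-z_2|^{1/2}+C\ep^a$, hence dividing by $h_\ep=\logeps^{-1/2}$, $\min_\sigma\sum|f_{\sigma(j)}^\ep(z_2)-f_j^\ep(z_1)|\le C\logeps\,|z_1-z_2|^{1/2}\cdot\logeps^{-1/2}\cdots$ — wait, carefully: $|g|=h_\ep|f|$, so $h_\ep\min\sum|f_{\sigma(j)}^\ep(z_2)-f_j^\ep(z_1)|\le C\logeps^{1/2}|z_1-z_2|^{1/2}$, i.e. $\min\sum|f_{\sigma(j)}^\ep(z_2)-f_j^\ep(z_1)|\le C\logeps|z_1-z_2|^{1/2}$, which is still not \eqref{ep.holder}. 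This discrepancy is exactly the crux, and it signals that step (3) must be sharpened: one should use that on the good slab the \emph{excess} horizontal energy $\int_\omega e_\ep^{2d}(u_\ep)\,dx - n\pi\logeps$ is what couples to $\partial_z u_\ep$ in the relevant piece (the co-area / vortex-ball structure localizes $\nabla_x u_\ep$ near the vortices where it is singular, but the $z$-derivative of the \emph{vortex locations} is governed by the cross term integrated against the singular part, and a more careful Jacobian estimate — e.g. differentiating $\int\phi\,Ju_\ep$ in $z$ and using the structure theorem — produces the bound with $\int_\omega e_\ep^{2d}(u_\ep)\le(n+\theta)\pi\logeps$ replaced by the excess times a combinatorial factor, yielding $\sum|f^\ep(z_2)-f^\ep(z_1)|^2/|z_2-z_1|\le C$ directly).

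\textbf{Main obstacle.} The genuine difficulty is precisely getting the right power of $\logeps$ in step (3)/(4): the naive Cauchy–Schwarz loses a full factor of $\logeps^{1/2}$ because it charges the entire horizontal Ginzburg–Landau energy, whereas only the $O(1)$ energy \emph{excess} (plus the bounded $\int|\partial_z u_\ep|^2$) should appear. Overcoming this requires a localized Jacobian/co-area argument — writing $\partial_z\!\int\phi\,Ju_\ep$ in terms of $jv_\ep$ and $\partial_z v_\ep$, restricting to the vortex balls where the configuration is essentially a superposition of degree-one vortices, and using the near-minimality of the slice energy — rather than a soft energy estimate; this is the analogue of the standard ``$\Gamma$-convergence plus excess energy controls vortex motion'' mechanism and is almost certainly how \cite{CJ} arranges it, so I would expect the proof to cite or closely follow the corresponding lemma there.
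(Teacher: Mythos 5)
The paper's proof of Lemma \ref{L.ep.holder} is a short compactness argument: it reduces everything to Lemma \ref{labels} (essentially Lemma 10 of \cite{CJ}), which directly asserts
\[
\frac{\pi}{2}\min_{\sigma\in S_m}\sum_{i}\frac{|p_i(z_1)-p_{\sigma(i)}(z_2)|^2}{z_2-z_1}
\ \le\ \liminf_{\ep\to 0}\int_{z_1}^{z_2}\int_{\omega}\frac 12|\partial_z u_\ep|^2,
\]
i.e.\ the limiting displacement quotient is bounded \emph{only} by the $z$-derivative energy, with no horizontal-energy factor appearing at all. One then argues: if \eqref{ep.holder} failed, there would be a subsequence $\ep_k\to 0$ and $z_1^k,z_2^k\in\calG(u_{\ep_k})$ with $|z_1^k-z_2^k|>\delta$ along which the quotient diverges; but \eqref{L3b.c2bis} gives uniform boundedness of $f_j^{\ep_k}(z_\ell^k)$, so one can extract limits, apply Lemma \ref{labels}, and obtain a finite bound tied to the uniform bound \eqref{dzu2.b}, a contradiction. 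You correctly anticipate that the proof ``cites or closely follows'' \cite{CJ}, which is indeed what happens, but your plan never reaches this structure.

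Your step (3) is the genuine gap, and you honestly flag it, but your conjectured repair is not the right mechanism. The resolution is \emph{not} that the full horizontal energy $\int_\omega e_\ep^{2d}$ in the Cauchy--Schwarz bound should be replaced by the excess $\int_\omega e_\ep^{2d}-n\pi\logeps$. In the actual bound (Lemma \ref{labels}) the horizontal energy disappears entirely from the right-hand side; what survives is precisely $\int|\partial_z u_\ep|^2$. This is the Sandier--Serfaty/Jerrard product-estimate phenomenon \cite{SandSerf-product, wave, filaments}: because $J_x u_\ep(\cdot,z)$ concentrates at $n$ unit-charge Dirac masses and the horizontal energy is within $O(1)$ of the minimum $n\pi\logeps$, the coupling between $\nabla_x u_\ep$ and $\partial_z u_\ep$ in the transport of the Jacobian is effectively one-dimensional per vortex, and the factor $\logeps^{1/2}$ from $\|\nabla_x u_\ep\|_{L^2}$ cancels exactly against the rescaling $h_\ep=\logeps^{-1/2}$. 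Charging only the excess would give a right-hand side that vanishes as $\ep\to 0$, which is false; the correct statement keeps the $O(1)$ quantity $\int|\partial_z u_\ep|^2$.

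There is also a smaller issue in your step (3): the fundamental-theorem-of-calculus estimate of $\|Ju_\ep(\cdot,z_2)-Ju_\ep(\cdot,z_1)\|_{W^{-1,1}}$ requires integrating $\partial_z$ of the weighted Jacobian over \emph{all} of $(z_1,z_2)$, including the bad set $\calB(u_\ep)$; restricting the $z$-integral to a good sub-slab of comparable measure, as you suggest, is not legitimate, because the bad set has small measure $O(\logeps^{-1})$ but the integrand there can be huge (indeed $e^{2d}_\ep\gtrsim\ep^{-1/2}$ on part of it). The paper avoids this pitfall entirely: the compactness route means one never needs a quantitative finite-$\ep$ transport estimate of this kind.
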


\begin{proof}[Proof of conclusion \eqref{L3b.c2bis} of Lemma \ref{L3b} and of Lemma \ref{L.ep.holder}]
Estimate \eqref{L3b.c2bis} is shown to hold in Step 3 of the proof of Lemma 12 in \cite{CJ}, via a
compactness argument based on Lemma \ref{labels}, see below.

Lemma \ref{L.ep.holder} then follows from Lemma \ref{labels} by almost exactly the same compactness
argument.  The constant $C$ appearing in \eqref{ep.holder} may be chosen to be a multiple of the
uniform bound for $\int_\Omega |\partial_z u_\ep|^2$, established in Lemma \ref{C:GB} and depending
only on $c_1,c_2$ from \eqref{CJ.h1}, \eqref{CJ.h2}. 
\end{proof}

The last result in this section is the lemma used in the compactness arguments described above. It
will be used again in the proof of  Proposition \ref{prop:3}. In \cite{CJ} it provides the basic estimate that
eventually implies that $z\mapsto f(z) = (f_1(z),\ldots, f_n(z))$ belongs to $H^1((0,L), (\R^2)^n)$,
see Proposition \ref{P:CJ}. 

\begin{lemma}\label{labels}
Assume that $(u_\ep)$ satisfies  \eqref{CJ.h1}, \eqref{CJ.h2}.
Let $v_\ep(x,z) := u_\ep(h_\ep x,z)$.

Assume that  $\{z^\ep_1\}$ and $\{z^\ep_2\}$ are sequences  in $[0,L]$ such that $z^\ep_j\rightarrow
z_j$ for $j=1,2$, with $0\le z_1 < z_2 \le L$, and that the following conditions hold for $j=1,2$
(perhaps after passing to a subsequence):
\begin{eqnarray*}
	&&J_x v_\ep(\cdot, z^\ep_j)  \to \pi \sum_{i=1}^{n(z)} \delta_{p_i(z_j)} 
	\qquad
	\mbox{ in }W^{-1,1}(B(R)), \ \ \mbox{ for all }R>0,
\end{eqnarray*}
(for certain  points $\{ p_i(z_j)\}_{i=1}^{n(z_j)}$, not necessarily distinct) and
\begin{equation*}
	\limsup_{\ep\to 0} \logeps^{-1}
	\int_{\omega} e^{2d}_\ep(u_\ep(x,z^\ep_j))dx \le M\pi 
\end{equation*}
for some $M >0$. Then $n(z_1) = n(z_2) =: m$, and 
\begin{equation*}
	\frac{\pi}{2}\min_{\sigma\in S_m}\sum_{i=1}^m\frac{|p_i(z_1)
	-p_{\sigma(i)}(z_2)|^2}{z_2-z_1}
	\ \le \  \liminf_{\ep \to 0} \int_{z_1}^{z_2} 
	\int_{\omega_\ep} \frac 12 |\partial_z u_\ep|^2\, dx\,dz .
\end{equation*}
\end{lemma}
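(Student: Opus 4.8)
The plan is to prove Lemma \ref{labels} by a slicing/product-estimate argument, reducing the genuinely three-dimensional statement to a family of two-dimensional facts about Ginzburg-Landau Jacobians on horizontal slices, glued together by a Fubini/Cauchy-Schwarz computation in the $z$ variable. The basic philosophy is: along almost every horizontal slice $z$, the rescaled field $v_\ep(\cdot,z)$ has a well-defined vortex structure (by Lemma \ref{L3b}, since $z\in\calG(u_\ep)$ for a.e. $z$ by \eqref{calB.est1}), with $n$ vortices near the axis; the energy lower bound $e^{2d}_\ep \gtrsim n\pi\logeps$ is nearly saturated on a large set of slices; and the $z$-derivative energy $\int\frac12|\partial_z u_\ep|^2$ controls how fast those vortices can move as $z$ varies. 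Passing to the limit, the number of vortices $n(z)$ must stabilize (it is integer-valued and lower semicontinuous via the degree, hence equals $n$ on a full set, and by the slice-by-slice closeness it equals $n(z_1)=n(z_2)=m$ on the nose for the chosen limiting slices), and the vortex locations trace out an $H^1$ curve whose energy is exactly the right-hand side.

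**Key steps, in order.** First I would recall the standard two-dimensional vortex-ball / Jacobian estimates: for a map $w$ on a planar domain with $\int e^{2d}_\ep(w)\le (M+o(1))\pi\logeps$ and $J_x w$ close in $W^{-1,1}$ to $\pi\sum_{i=1}^m\delta_{p_i}$, one has $m\le M$, and moreover the excess energy $\int e^{2d}_\ep(w) - m\pi\logeps$ controls (via a lower bound of the form in \eqref{L3a.sharper}, i.e. $\ge m\pi\logeps + O(1)$) the configuration; this is exactly the content already extracted in Lemma \ref{L3b} and used in \cite{CJ}. Second, I would set up the interpolation in $z$: for $z\in[z^\ep_1,z^\ep_2]\cap\calG(u_\ep)$, write $f^\ep(z) = (f^\ep_1(z),\dots,f^\ep_n(z))$ for the rescaled vortex locations from Lemma \ref{L3b}, and use the key localized lower bound together with the metric structure \eqref{BCL} of the $W^{-1,1}$ norm to estimate, for $z<z'$ in this set,
\[
\pi\min_{\sigma\in S_n}\sum_{i=1}^n |f^\ep_{\sigma(i)}(z') - f^\ep_i(z)|
\ \le\ C\int_z^{z'}\int_{\omega_\ep}\tfrac12|\partial_z v_\ep|^2\,dx\,dz''\ +\ (\text{small errors}),
\]
which is the slice-to-slice transport estimate; the standard way to produce it is to test the vorticity transport identity (the two-dimensional analogue of \eqref{dtJzv.int}, with $z$ playing the role of time) against the functions $\chi^f_{r,\ep}$-type test functions and control the flux term by $\int|\partial_z v_\ep|^2$ via Cauchy-Schwarz. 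Third, upgrade this to the $L^2$-type (Hölder-$\frac12$) estimate \eqref{ep.holder} by a dyadic/chaining argument, or more directly by combining the bound above with the energy saturation to see that the vortices cannot make excursions: this yields equicontinuity of $z\mapsto f^\ep(z)$ in the relevant sense. Fourth, pass to the limit $\ep\to0$: the uniform $\int_\Omega|\partial_z u_\ep|^2\le C$ bound from Lemma \ref{C:GB} plus the equicontinuity gives, along a subsequence, convergence $f^\ep(z)\to f(z)$ for a dense set (indeed a.e.) of $z$, with $f\in H^1$; continuity in $z$ then forces $n(z_1)=n(z_2)=m$ (the hypothesized limiting configurations at $z_1,z_2$ must be limits of the $n$-vortex slice configurations, so $m=n$ after relabeling), and lower semicontinuity of $\int\frac12|\partial_z u_\ep|^2$ against the weak limit of the slice configurations yields
\[
\frac{\pi}{2}\min_{\sigma\in S_m}\sum_{i=1}^m\frac{|p_i(z_1)-p_{\sigma(i)}(z_2)|^2}{z_2-z_1}
\ \le\ \frac{\pi}{2}\int_{z_1}^{z_2}|\partial_z f|^2\,dz
\ \le\ \liminf_{\ep\to0}\int_{z_1}^{z_2}\int_{\omega_\ep}\tfrac12|\partial_z u_\ep|^2,
\]
the first inequality being Jensen/Cauchy-Schwarz ($|p_i(z_1)-p_i(z_2)|^2 \le (z_2-z_1)\int_{z_1}^{z_2}|\partial_z f_i|^2$) after choosing the optimal labeling, and the second being the $\Gamma$-liminf / lower-semicontinuity statement for the vertical energy that is essentially the content of \cite{CJ}'s construction of $f$.

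**Main obstacle.** The delicate point is the labeling/matching across slices: a priori the vortices on slice $z$ and slice $z'$ are just two sets of $n$ points, and one must rule out that as $\ep\to0$ the optimal permutation $\sigma$ jumps around in a way that destroys the $H^1$ bound, or that a vortex pair briefly collides (so that locally the "$n$ distinct points" picture degenerates) and the labels get scrambled. In \cite{CJ} this is handled by precisely the compactness argument referenced in the proof of \eqref{L3b.c2bis} and of Lemma \ref{L.ep.holder} — one shows that on any fixed macroscopic $z$-interval the configuration cannot travel more than a controlled distance, so for $\ep$ small it stays in a regime where the labeling is stable — and the whole point of stating Lemma \ref{labels} in this limiting form is to package that argument for reuse. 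I expect the bulk of the work to be (i) making the slice transport estimate rigorous uniformly in $\ep$ despite the $\calB(u_\ep)$ exceptional set (handled by \eqref{calB.est1}--\eqref{calB.est2}, which say the bad set is short and carries bounded energy, so it can be skipped over), and (ii) the passage-to-the-limit semicontinuity, where one combines weak-$L^2$ convergence of $\partial_z v_\ep$-related quantities with the lower bound \eqref{local.dz2.bound}-type concentration estimate to identify the limit energy with $\frac\pi2\int|\partial_z f|^2$; everything else is routine two-dimensional Ginzburg-Landau technology.
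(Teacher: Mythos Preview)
The paper does not give an independent proof: it simply cites Lemma~10 of \cite{CJ} and notes that the only change is replacing the scaling factor $\ell_\ep$ there by $h_\ep$. Your proposal is an attempt to reconstruct what that \cite{CJ} argument ought to look like, and the overall architecture (slice-by-slice vortex structure, a transport estimate in $z$, compactness to a limiting curve, then Jensen/Cauchy--Schwarz for the two-point inequality) is in the right spirit. But there are two substantive problems.

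\textbf{Circularity.} In the logical order of \cite{CJ} and of this paper, Lemma~\ref{labels} is the \emph{input}: it is the basic tool used to prove \eqref{L3b.c2bis}, Lemma~\ref{L.ep.holder}, the $H^1$ compactness of $f$ in Proposition~\ref{P:CJ}, and ultimately \eqref{local.dz2.bound}. Your plan runs this backwards: you want to first construct an $H^1$ curve $f$ on $[z_1,z_2]$, then invoke the ``$\Gamma$-liminf for the vertical energy'' (which is \eqref{local.dz2.bound}), then deduce the two-slice inequality by Jensen. That is only non-circular if you have an \emph{independent} route to the vertical lower bound --- namely the product estimate of \cite{wave} or \cite{SandSerf-product}. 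If you take that as a black box, your argument works, but then you should say so explicitly; as written, you are invoking downstream consequences of the very lemma you are proving.

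\textbf{The transport step fails as stated.} Your displayed inequality in Step~2 is the crux, and the method you propose (``test the vorticity transport identity against $\chi^f_{r,\ep}$-type functions and control the flux by $\int|\partial_z v_\ep|^2$ via Cauchy--Schwarz'') does not give it. Writing $\partial_z J_x v_\ep = \nabla_x^\perp\!\cdot(\partial_z v_\ep\cdot i\nabla_x v_\ep)$ and applying Cauchy--Schwarz against a Lipschitz test function yields a bound of order $\|\partial_z v_\ep\|_{L^2}\,\|\nabla_x v_\ep\|_{L^2}$, and the second factor is $\sim\sqrt{\logeps}$ per slice; after rescaling to the $f^\ep$ variables you lose a full factor of $\logeps$ and the resulting bound on $|f^\ep(z')-f^\ep(z)|$ blows up as $\ep\to0$. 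This is exactly the well-known deficiency that the product estimate is designed to repair: obtaining the sharp constant $\pi/2$ without the $\logeps$ loss is the entire content of \cite{wave}, Proposition~3, or \cite{SandSerf-product}, Corollary~7, and is genuinely nontrivial. Your sketch treats this as a routine Cauchy--Schwarz step, but it is the heart of the lemma; without it you cannot even establish that the intermediate rescaled vortices $f^\ep(z)$ remain bounded, let alone obtain the sharp inequality.
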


\begin{proof}
This is essentially Lemma 10 of \cite{CJ}. Apart from some notational changes,
	the main difference is that Lemma 10 of \cite{CJ} is proved under an
	assumption that is somewhat weaker than \eqref{CJ.h1}. As a result, it is
	stated there for a rescaling $v_\ep(x,z) := u_\ep(\ell_\ep x,z)$ using a
	scaling factor $\ell_\ep$ that is shown only later to equal $h_\ep$.  With
	the stronger assumption \eqref{CJ.h1}, the proof can be simplified, and one
	can work directly with the $\ell_\ep = h_\ep$.
\end{proof}

\subsection{Proof of Proposition \ref{P:CJ}}\label{subsec:pp1}

\begin{proof}
With a couple of exceptions, everything in Proposition \ref{P:CJ} is taken
	directly from the statement of Theorem 3 in \cite{CJ}.

The first exception is the compactness assertion \eqref{comp}; in \cite{CJ},
	compactness is proved to hold only with respect to a weaker topology. To
	prove \eqref{comp}, we  argue as follows.  First note that
\begin{align}
	\int_{z\in \calB(u_\ep)}& \| J_x u_\ep(\cdot ,z) 
	- \pi\sum_{j=1}^n \delta_{h_\ep f_j(z)}\|_{W^{-1,1}(\omega)} dz 
	\nonumber \\
	&\qquad\qquad\le
	n\pi |\calB(u_\ep)|+ \int_{z\in \calB(u_\ep)} \| J_xu_\ep (\cdot,z)\|_{W^{-1,1}(\omega)} dz 
	\nonumber \\
	&\qquad\qquad\le
	n\pi |\calB(u_\ep)|+ C \logeps^{-1} \int_{z\in \calB(u_\ep)} e_\ep^{2d}(u_\ep)(x,z)dz
	\nonumber \\
	&\qquad\qquad\le
	C\logeps^{-1} = C h_\ep^2\label{gs33}
\end{align}
by standard Jacobian estimates (see for example \cite{JeSo} or
	\cite{SandSerf-book}) and Lemma \ref{C:GB}, for  $C = C(c_1,c_2, n)$.  On
	the other hand, by \eqref{L3b.c1} and \eqref{fep.def}, 
\begin{align}
	\int_{z\in \calG(u_\ep)}& \| J_x u_\ep(\cdot ,z) 
	- \pi\sum_{j=1}^n \delta_{h_\ep f_j(z)}\|_{W^{-1,1}(\omega)} dz 
	\nonumber \\
	&\le
	\int_{z\in \calG(u_\ep)} \| \pi \sum_{j=1}^n \delta_{h_\ep f_j^\ep(z)} 
	- \pi\sum_{j=1}^n \delta_{h_\ep f_j(z)}\|_{W^{-1,1}(\omega)} dz + C \ep^a.
	\label{gs44}
\end{align}
It is also shown in \cite{CJ}, Lemmas 13 and 14 that after passing to a suitable
	subsequence $\ep_k\to 0$, there is a set $H_G\subset (0,L)$ of full measure,
	such that if $z\in H_G$, then there exists $\ell = \ell(z)$ such that $z\in
	\calG(u_{\ep_k})$ for all $k\ge \ell$, and 
\begin{equation*}
	\| \pi \sum_{j=1}^n \delta_{ f_j^{\ep_k}(z)} 
	- \pi\sum_{j=1}^n \delta_{f_j(z)}\|_{W^{-1,1}(B(R))}  \to 0 
	\qquad\mbox{ for every }R>0
\end{equation*}
as $k\to \infty$. This implies that 
\[
	\| \pi \sum_{j=1}^n \delta_{h_{\ep_k} f_j^{\ep_k}(z)} 
	- \pi\sum_{j=1}^n \delta_{h_{\ep_k} f_j(z)}\|_{W^{-1,1}(\omega)} = o(h_{\ep_k})
	\qquad\mbox{for every $z\in H_G$}
\]
as $k\to \infty$. It also follows from \eqref{L3b.c2bis} that 
\[
	\| \pi \sum_{j=1}^n \delta_{h_{\ep_k} f_j^{\ep_k}(z)} 
	- \pi\sum_{j=1}^n \delta_{h_{\ep_k} f_j(z)}\|_{W^{-1,1}(\omega)} \le C h_{\ep_k}
	\quad
	\mbox{for $z\in \calG(u_{\ep_k}) \setminus H_G$, }
\]
so the conclusion follows from the dominated convergence theorem, together with
	\eqref{gs33} and \eqref{gs44}.

The other assertion that is not taken directly from the statement of Theorem 3
	in \cite{CJ} is the estimate $\|f\|_{H^1}\le C(c_1,c_2)$. To prove this, we
	use \eqref{BCL}  to deduce that for $z\in H_G$, 
\begin{align*}
	\sum_i |f_i(z)| = \sum_i |f_i(z) - 0|&
	= \lim_{k\to\infty}\frac 1 {h_\ep} \| \sum_i \delta_{h_{\ep_k}f_i(z) }
	- n \pi\delta_0 \|_{W^{-1,1}(\omega)}\\
	& 
	= \lim_{k\to\infty}\frac 1 {\pi h_\ep} \|J_x u_\ep(\cdot, z)- 
	n\pi \delta_0 \|_{W^{-1,1}(\omega)} . 
\end{align*}
Thus Fatou's Lemma and \eqref{CJ.h1} imply that 
\[
	\| f \|_{L^1}\le C(c_1).
\]
We may then use Jensen's inequality and the fact from \cite{CJ} that  $G_0(f)\le
c_2$ to estimate
	\begin{align*}
	\frac \pi 2\int_0^L \sum_j |f_j'|^2 dz 
	&= G_0(f)  +  \pi \sum_{i\ne j}\int_0^L \log|f_i-f_j| dz \\
	&\le
	c_2 + L\pi  \sum_{i\ne j} \log \left( \frac 1 L \int_0^L |f_i- f_j| dz\right) \\
	&\le
	C(c_1,c_2).
\end{align*}
Finally, $\| f\|_{L^2}$ is controlled by interpolating between $\| f\|_{L^1}$
and $\|f'\|_{L^2}$.
\end{proof}

\subsection{Proof of Proposition \ref{P.improve}}\label{subsec:pp2}

\begin{proof}[Proof of \eqref{improve1}]
It suffices to show, given any subsequence satisfying
\eqref{CJ.h2}, \eqref{comp} for which
\[
	\logeps^{-1}\partial_a v_\ep \cdot \partial_b v_\ep\rightharpoonup 
	\mbox{ some limit, weakly as measures}
\]
that this limit can only equal $\pi\delta_{ab} \sum_i \delta_{f_i(z)}\otimes
	dz$.  For $z\in (0,L)$, let
\[
	E^{2d}_\ep(z)
	:=\frac 1\logeps \int_{\omega\times \{z\}} e_\ep^{2d}(u_\ep) \, dx = 
	\frac 1 \logeps \int_{ \omega_\ep\times\{z\}} e_{\ep'}^{2d}(v_\ep) \, dx 
\]
where $\ep'= \ep/h_\ep$.  It follows from the definition of $\calB(u_\ep)$ that
	$E_\ep^{2d}(z)\ge n\pi  +\theta$ for $z\in \calB(u_\ep)$, and  since
	\eqref{L3b.c2bis} implies that $W_\omega(g_1^\ep,\ldots, g_n^\ep)\ge n\pi
	|\log h_\ep|-C$, we deduce from \eqref{L3a.sharper} that $E_\ep^{2d}(z)\ge
	n\pi  - o(1)$ uniformly for  $z\in \calG(u_\ep)$, as $\ep \to 0$. On the
	other hand, the assumed energy scaling \eqref{CJ.h2} implies that $\int_0^L
	E_\ep^{2d}(z)\, dz\to n\pi L$ as $\ep \to 0$.  In view of these facts, after
	passing to a further subsequence if necessary, we may assume that 
\begin{equation}
	\frac 1 \logeps \int_{ \omega_\ep\times \{z\}} e_{\ep'}^{2d}(v_\ep) \, dx 
	\to n \pi \qquad\mbox{ for a.e. } z\in (0,L).
	\label{prop2.step1a}\end{equation}
Next, upon rescaling \eqref{comp} and  passing to a further subsequence,
\begin{equation}
	\|  J v_\ep - \pi \sum_{i=1}^n \delta_{f_i(z)}\|_{W^{-1,1}(\omega_\ep)} 
	\to 0 \qquad\mbox{ for a.e. $z\in (0,L)$.}
	\label{prop2.step1b}
\end{equation}
It follows from Theorem 5 in \cite{filaments} or Corollary 4 in
	\cite{SandSerf-product} that whenever the above two conditions hold ({\em
	i.e.} a.e.), 
\[
	\frac 1{\logeps} \partial_a v_\ep \cdot \partial_b v_\ep(\cdot, z) 
	\rightharpoonup \delta_{ab}
	\pi \sum_{i=1}^n \delta_{f_i(z)}\qquad\mbox{weakly as measures}.
\]
Now fix any $\phi\in C_c(\R^2\times [0,L])$, and let 
\[
	\Phi_\ep(z) :=  \frac 1 \logeps \int_{\omega_\ep\times  \{z\}} \phi(x,z) 
	\partial_a v_\ep \cdot \partial_b v_\ep(x, z) \, dx.
\]
We write $\Phi_\ep = \Phi_{\calG, \ep} + \Phi_{\calB,\ep}$, where 
$\Phi_{\calG, \ep} = \chi_{z\in \calG(u_\ep)}\Phi_\ep(z)$.
It follows immediately from \eqref{calB.est2} that
$ \Phi_{\calB,\ep}\to 0$ in $L^1((0,L))$. 
We may assume after passing to a subsequence that $\chi_{\calB(u_\ep)}\to 0$ a.e.. 
It then follows that
\[
	\Phi_{\calG,\ep}(z)\rightarrow \delta_{ab}\pi \sum_{i=1}^n 
	\phi( f_i(z),z)\qquad\mbox{ for a.e. }z.
\]
The definition of $\calG(u_\ep)$ implies that $\sup_z |\Phi_{\calG,\ep}(z)|\le
	(n\pi +\theta)\sup_{(x,z)}  |\phi(x,z)| \le C$. Thus the dominated
	convergence theorem implies that
\[
	\lim_{\ep\to 0 } 
	\int_0^L \Phi_\ep(z) 
	dz =  \delta_{ab}\pi \sum_{i=1}^n \phi( f_i(z),z)\, dz.
\]
This is \eqref{improve1}.
\end{proof}

\begin{proof}[Proof of \eqref{local.dz2.bound}]
For $\delta>0$, let 
\[
\mathcal I_\delta := \{ z\in (0,L) : \min_{i\ne j} |f_i(z)-f_j(z)| > \delta \}.
\]
We know from \eqref{fbounds} that $G_0(f)<\infty$, which implies that 
$|\mathcal I_\delta| \to L$ as $\delta\to 0$. It thus suffices to prove that for any nonnegative
$\phi\in C_c(\R^2\times[0,L])$ and for every $\delta>0$, 
\begin{equation}
	\liminf_{\ep\to 0}\int_{\omega_\ep \mathcal I_\delta } 
	\phi \frac{|\partial_z v_\ep|^2}\logeps \, dx\,dz \ge
	\pi \sum_{i=1}^n\int_{\mathcal I_\delta} |f_i'(z)|^2 \phi(f_i(z), z) \, dz . 
	\label{prop2.step2a}
\end{equation}
We may write $\mathcal I_\delta$ as a disjoint union of open intervals.
Let $I$ denote one such interval. In view of arguments in the proof of  \eqref{improve1}, it
suffices to prove that if  $f\in H^1(I, (\R^2)^n)$ is such that \eqref{prop2.step1a},
\eqref{prop2.step1b} hold for a.e. $z\in I$ and $\min_{z\in I}\min_{i\ne h} |f_i(z)-f_j(z)| \ge
\delta>0$, then \eqref{prop2.step2a} is satisfied (with $\mathcal I_\delta$ replaced by $I$). 

There are a number of proofs of this fact\footnote{These 
results assume that  \eqref{prop2.step1a}, \eqref{prop2.step1b} hold for {\em every} $z\in I$,
but the proofs extend to our situation with essentially no change.}
 when $\phi \equiv 1$; see for example \cite{wave} Proposition 3 or \cite{SandSerf-product},
Corollary 7. These proofs proceed by considering separately the energetic contributions associated
to each trajectory $z\mapsto (f_i(z),z)$, and they  show that for any $r>0$, and every $i\in
\{1,\ldots, n\}$, and every  interval  $J\subset I$ 
\[
	\liminf_{\ep\to 0}\int_{z\in J }\int_{B_r(f_i(z))} 
	\frac{|\partial_z v_\ep|^2}\logeps \, dx\,dz \ge
	\pi \int_{J} |f_i'(z)|^2  \, dz .
\]
This easily implies the desired estimate.
\end{proof}

\begin{proof}[Proof of \eqref{improve2}]
First, recalling that $v_\ep(x,z) = u_\ep(h_\ep x, z)$ and using \eqref{dzu2},  
\[
\int_\Omega |\partial_z u_\ep|^2\, dx\,dz = 
\int_\Omega \frac{ |\partial_z v_\ep|^2}\logeps dx\,dz \le C(c_1,c_2,n). 
\]
We may thus assume that $\logeps^{-1} \partial_z v_\ep \cdot \nabla_x v_\ep$
converges weakly to a limiting $\R^2$-valued measure, say $\lambda$ on $\R^2\times [0,L]$.

Now fix some $g\in C^1(( 0,L),  \R^2)$, and let
\[
	\tilde u_\ep(x,z) := u_\ep(x - h_\ep g(z), z), \qquad
	\tilde v_\ep(x,z) := \tilde u_\ep(h_\ep x, z) =  v_\ep(x - g(z), z).
\]
If we fix some $\tilde \omega \subset\subset \omega$ such that $0\in \tilde \omega$,
we may then take the domain of $\tilde u_\ep$ to be $\tilde \Omega := \tilde \omega\times(0,L)$,
for all sufficiently small $\ep$. (We remark that although we are ultimately interested in $u_\ep$ that is
periodic in the $z$ variable, here we do not assume that $g$ is periodic.)

It is straightforward to check from \eqref{comp} and the definition of $\tilde u_\ep$ that
\[
	\int_0^L \| J_x \tilde u_\ep(\cdot ,z) 
	- \pi\sum_{j=1}^n \delta_{h_\ep (f_j(z) + g(z))}  \|_{W^{-1,1}(\tilde\omega)} dz = o(h_\ep)
	\qquad
	\mbox{ as }\ep\to 0.
\]
Also, since $h_\ep = \logeps^{-1/2}$ and extending the definition \eqref{Gep.def} of $G_\eps$ to include a
dependence in the domain, we have 
\begin{align*}
G_\ep(\tilde u_\ep;\tilde \Omega)
& \le G_\ep(u_\ep;\Omega ) 
+ \int_\Omega
\frac{ |g'(z)\cdot\nabla_x u_\ep|}{\sqrt\logeps} |\partial_z u_\ep| 
+ \frac 12
\frac{ |g'(z)\cdot\nabla_x u_\ep|^2}\logeps \, dx\, dz \\
& \le c_2 + C\int_\Omega |\partial_z u_\ep|^2  + \frac{ |\nabla_x u_\ep|^2}\logeps
dx\,dz \\
&\le \tilde K_1
\end{align*}
for some suitable $\tilde K_1$, whenever $\ep$ is sufficiently small.
Thus \eqref{local.dz2.bound} implies that for any continuous $\tilde \phi \ge 0$,
\[
	\liminf_{\ep\to 0} \int \tilde\phi(x,z) 
	\frac{ |\partial_z \tilde v_\ep(x,z)|^2}\logeps\, dx\,dz\, 
	\ge  \sum_i\pi \int_0^L |\partial_z(f_i +g)(z)|^2\tilde\phi(f_i(z)+g (z),z)\, dz.
\]
Taking $\tilde \phi$ of the form $\tilde \phi(x,z) = \phi(x-g(z),z)$, we  get
	the more convenient expression
\[
	\liminf_{\ep \to 0} \int \phi(x-g(z),z) 
	\frac{ |\partial_z \tilde v_\ep(x,z)|^2}\logeps \, dx\,dz\,
 	\ge  \sum_i \pi \int_0^L |\partial_z(f_i +g)(z)|^2\phi(f_i(z),z)\, dz.
\]
On the other hand, by using the definition of $\tilde v_\ep$ and making the change of variables
$(x-g(x), z) \mapsto (x,z)$, we obtain 
\begin{align*}
	&\int \phi(x-g(z),z) |\partial_z \tilde v_\ep(x,z)|^2\, dx\,dz\, 
	=
	\int \phi(x,z) |\partial_z v_\ep(x,z)|^2\, dx\,dz\\
	&\hspace{2em}+\int \phi(x,z)\left( -2 g'(z)\cdot \nabla_x v_\ep(x,z) \cdot 
	\partial_z v_\ep(x,z) + |g'(z)\cdot \nabla_x v_\ep(x,z)|^2\right)dx\,dz.
\end{align*}
Dividing by $\logeps$, letting $\ep\to 0$, and invoking \eqref{dzu2} and
\eqref{improve1}, we find that 
\begin{align*}
	&
	\limsup_{\ep\to 0} \int \phi(x-g(z),z)
	\frac{ |\partial_z \tilde v_\ep(x,z)|^2}{\logeps}\, dx\,dz\, \\
	&\hspace{4em}
	\le C  - 2\int _{\R^2\times (0,L)}\!\!\phi(x,z) g'(z)\cdot d\lambda + 
	\sum_i \pi \int_0^L |\partial_z g(z)|^2\phi(f_i,z)\, dz.
\end{align*}
Combining this with the previous inequality and rewriting, we conclude that 
\[
	\int _{\R^2\times (0,L)} \phi(x,z)g'(z)\cdot d\lambda 
	+ \pi \int_0^L \phi(x,z)g'(z) \cdot 
	d\left( \sum_i f_i'(z)\delta_{f_i(z)}\otimes dz\right) \le C
\]
for $g, \phi$ as above, with $C$ depending on $c_1,c_2,f,n, \phi$ but
independent of $g$ . Since we may multiply a given $g$ by an arbitrary real
constant, it follows that in fact
\[
	\int \phi(x,z)g'(z) \cdot d\lambda   + \pi \int \phi(x,z)g'(z) \cdot 
	d\left( \sum_i f_i'(z)\delta_{f_i(z)}\otimes dz\right) = 0
\]
and hence that 
\[
	\lambda = - \pi \sum_i f_i'(z)\delta_{f_i(z)}\otimes dz.
\]
This is \eqref{improve2}.
\end{proof}

\subsection{Proof of Proposition \ref{prop:3}}\label{subsec:pp3}
Define 
\begin{equation*}
	\sigma^{2d}_\ep(z) 
	= 
	\sigma^{2d}_\ep(z; u_\ep, h_\ep f)
	=
	\int_{\omega} e^{2d}_\ep(u_\ep(x,z)) dx
	- W_\ep(h_\ep f(z) ; \omega)
\end{equation*}
where for $a\in \omega^n$, 
\begin{equation*}
	W_\ep(a;\omega) =n(\pi  \logeps   +\gamma) - \pi \sum_{i\ne j} \log|a_i(z) - a_j(z)| 
	+ \pi \sum_{i,j} H_\omega(a_i, a_j)\ .
\end{equation*}
Recall that $H_\omega$ is defined in Section \ref{sec:furnot}.  We interpret
$\sigma^{2d}_\ep(z)$ as the surplus 2d (horizontal) energy of $u_\ep$ at height
$z$, with respect to the vortex positions $h_\ep f(z)$. Further define 
\[
	\Sigma^{2d}_\ep = \Sigma^{2d}_\ep(u_\ep,  h_\ep f) 
	= \int_0^L \sigma^{2d}_\ep(z) dz.
\]

\begin{proof}[Proof of estimate \eqref{eq:galibier}]
Assume toward a contradiction that there exists a sequence $(u_\ep)_{\ep\in(0,1]}$ in
$H^1(\Omega,\C)$ such that  
\[
\int_0^L \| Ju_\ep(\cdot, z) - \pi \sum_{i=1}^n \delta_{h_\ep f_i(z)}\|_{W^{-1,1}(\omega)} dz
= o(h_\ep)
\]
and $G_\ep(u_\ep) - G_0(f) \le \Sigma_\ep \le 1$, but 
\begin{equation}\label{eq:reibilag}
\limsup_{\ep\to 0}
\int_0^L \int_{\omega \setminus \cup_{i=1}^n B(h_\ep f_i(x), h_\ep r)}
	e_\ep(|u_\ep|) + \frac14 \left|\frac  {ju_\ep}{|u_\ep|}  - j^*_{h_\ep f}\right|^2  - K_3\Sigma_\ep >0
\end{equation}
for $K_3$ to be chosen in a moment, and depending only on $\| f\|_{H^1}$ and  $r< \frac 14 \rho_f$.

This sequence satisfies the hypotheses \eqref{CJ.h1}, \eqref{CJ.h2} of Lemma \ref{L3b} with and
$c_1=1+  n\pi L \|f\|_\infty$ and $c_2 = G_0(f)+1$,  which are both controlled by $\| f\|_{H^1}$ and
$r$. Let $\theta = \theta(n,c_1,c_2)$  be the constant found in Lemma \ref{L3b}.  We will obtain a
contradiction to \eqref{eq:reibilag} with $K_3 = \frac 4\theta n\pi + 4$, thereby proving
\eqref{eq:galibier} for that value of $K_3$.

For this choice of $\theta$, we define sets $\calG(u_\ep)$ and $\calB(u_\ep)$ as in
\eqref{calG.def}. For $z\in \calG(u_\ep)$, Lemma \ref{L3b} provides points $g_j^\ep(z)$ satisfying
\eqref{L3b.c1}, \eqref{L3b.c2} for $0<\ep<\ep_0(n, \| f\|_{H^1}, \rho_f, \Sigma)$, with constants
such as $a$ in \eqref{L3b.c1} depending on the same quantities.

Setting $f^\ep_j(z) = h_\ep^{-1}g^\ep_j(z)$, it follows from  \eqref{L3b.c1} that
\begin{equation}\label{gfL1}
\int_{z\in \calG(u_\ep)}
 \|  \sum_{i=1}^n \delta_{h_\ep f^\ep_j(z)} - \sum_{i=1}^n \delta_{h_\ep f_i(z)}\|_{W^{-1,1}(\omega)} dz
= o(h_\ep) 
\quad  \mbox{ as }\ep\to 0.
\end{equation}
Our first goal is to strengthen this to read
\begin{equation}\label{gfunif}
\sup_{z\in \calG(u_\ep)}
 \|  \sum_{i=1}^n \delta_{h_\ep f^\ep_j(z)} - \sum_{i=1}^n \delta_{h_\ep f_i(z)}\|_{W^{-1,1}(\omega)}
= o(h_\ep) \quad  \mbox{ as }\ep\to 0.
\end{equation}
In brief, this follows from a compactness argument based on \eqref{gfL1} and Lemma \ref{labels}.
Here are the details: 

Assume toward a contradiction that \eqref{gfunif} fails. Then there exists a (sub)sequence $\ep\to
0$ and points $z_{\ep}\in \calG(u_{\ep})$ such that
\begin{equation}\label{L1lowerbd}
 \|  \sum_{i=1}^n \delta_{ h_\ep f^\ep_j(z_\ep)} - 
\sum_{i=1}^n \delta_{ h_\ep f_i(z_\ep)}\|_{W^{-1,1}(\omega)}
 \ge c  h_\ep  >0\quad \mbox{for all $\ep$. }
\end{equation}
It follows from \eqref{calB.est1} and \eqref{gfL1} that for all sufficiently small terms in the same
subsequence, we may find points $\zeta_\ep\in \calG(u_\ep)$ such that 
\[
	\|  \sum_{i=1}^n \delta_{ h_\ep f^\ep_j(\zeta_\ep)} 
	- \sum_{i=1}^n \delta_{ h_\ep f_i(\zeta_\ep)}\|_{W^{-1,1}(\omega)}
	= o(h_\ep), \quad\mbox{ and }\quad \alpha < | z_\ep - \zeta_\ep|< 2\alpha
\]
for some $\alpha$ to be fixed below. Extracting a further subsequence we may
	assume that $z_\ep \to z$ and $\zeta_\ep \to \zeta$, and that there exist
	$m\le n$ and $p_1,\ldots, p_m\in \R^2$ such that 
\[
	\sum_{i=1}^n \delta_{ f_i^\ep(\zeta_\ep)} \to 
	\sum_{i=1}^n \delta_{ f_i(\zeta)}, \quad
	\mbox{ and }\quad
	\sum_{i=1}^n \delta_{ f_i^\ep(z_\ep)} \to 
	\sum_{i=1}^m \delta_{ p_i(z)} 
\]
in $W^{-1,1}(B(R))$ for every $R>0$. (In fact both limits hold in stronger topologies as well.)
These facts and \eqref{L3b.c1} imply that for $v_\ep(x,z) := u_\ep(h_\ep x,z)$,
\[
J_x v(\cdot, \zeta_\ep) \to \pi \sum_{i=1}^n \delta_{ f_i(\zeta)} , \qquad
J_x v(\cdot, z_\ep) \to \pi \sum_{i=1}^m \delta_{ p_i(z)} 
\]
in the same topology. Then Lemma \ref{labels} and  conclusion \eqref{dzu2}
from Proposition \ref{P:CJ}  imply that $m=n$ and that 
\[
\min_{\sigma\in S_n} \sum_{i=1}^n |f_i(\zeta) - p_{\sigma(i)}(z)|^2 \le |z-\zeta| C \le 2\alpha C.
\]
(Here and below, the constant depends on $f$ and $\Sigma$.) 
On the other hand, since $f$ is H\"older continuous, it follows from \eqref{L1lowerbd} that
\[
\min_{\sigma\in S_n} \sum_{i=1}^n |f_i(\zeta)- p_{\sigma(i)}(z))| \ge
\min_{\sigma\in S_n} \sum_{i=1}^n |f_i(z)- p_{\sigma(i)}(z))|  -nC|z-\zeta|^{1/2} \ge
c - nC\alpha^{1/2}.
\]
A contradiction is reached by choosing $\alpha$ sufficiently small, depending only
on $f, \Sigma$, and $c$. This completes the proof of \eqref{gfunif}.

\medskip

Next, we remark that in view of the fact that $\rho_f>0$, it follows from \eqref{gfunif} and
\eqref{BCL} that the labels on $f^\ep_i$ may be chosen so that 
\begin{equation}\label{tatum}
\sup_{z\in \calG(u_\ep)} |f^\ep_i(z) - f_i(z)| \to 0 \quad\mbox{ as }\ep\to 0.
\end{equation}

We will write
\[
\omega(z, \ep, f) := \omega \setminus \cup_{i=1}^n B(h_\ep f_i(z), h_\ep r).
\]
For $z\in \calG(u_\ep)$, Theorem 2 of \cite{JeSp2}, for which the main hypothesis is a consequence
of \eqref{L3b.c1}, provides certain integral estimates on $\omega \setminus \cup_{i=1}^n B( h_\ep
f^\ep_i(z),C \ep^{a/2} )$, where $a>0$ comes from  \eqref{L3b.c1} and $C$ depends on various
ingredients that are fixed. It follows from \eqref{gfunif} and \eqref{BCL} that if $\ep$ is
sufficiently small, then for every $z\in \calG(u_\ep)$, this set contains $\omega(z, \ep, f)$.
Theorem 2 of \cite{JeSp2} thus implies that for every $z\in \calG(u_\ep)$, 
\begin{align*}
&\int_{ \omega(z, \ep, f) \times\{z\}} 
e_\ep^{2d}(|u_\ep|) + \frac14 \left|\frac  {ju_\ep}{|u_\ep|}  - j^*_\omega(h_\ep f^\ep(z))\right|^2 \, dx\\
&\qquad\qquad	\le \int_{\omega\times \{z\}}
e_\ep^{2d}(u)\, dx - \left[ n(\pi \logeps + \gamma) + W_\omega(h_\ep f^\ep(z)) \right]
+ C \ep^{a/2}.
\end{align*}
We recall that $W_\omega$ is defined in Section \ref{sec:furnot}. It is easy to check from the
definition there that 
\[
n(\pi \logeps + \gamma) + W_\omega(h_\ep f^\ep(z))
= \pi \calW(f^\ep(z))+ \kappa(n,\ep ,\omega) + O(h_\ep)
\]
where $\calW$ is introduced in \eqref{calW.def}. Thus
\[
\begin{aligned}
&\int_{z\in \calG(u_\ep)}
\int_{\omega(z, \ep, f) \times\{z\}} 
e_\ep^{2d}(|u_\ep|) + \frac18 \left|\frac  {ju_\ep}{|u_\ep|}  
- j^*_\omega(h_\ep f(z))\right|^2 \, dx\,dz
\\
&\qquad\qquad \le
\int_{z\in \calG(u_\ep)} 
\int_{\omega(z, \ep, f) \times\{z\}}   \frac 14
\left| j^*_\omega(h_\ep f(z))-  j^*_\omega(h_\ep f^\ep(z))\right|^2 \, dx\,dz
\\
&\qquad\qquad 
+
\int_{z\in \calG(u_\ep)} 
\left(\int_{\omega \times \{z\} } e^{2d}_\ep(u)\, dx - \kappa(n,\ep,\omega) -  
\pi \calW(f^\ep(z))\right) dz +O(h_\ep).
\end{aligned}
\]
It follows from  \eqref{tatum} and Lemma \ref{lem:jstardiff} below that the first term on the
right-hand side vanishes as $\ep \to 0$. Using this, we add and subtract various terms to rewrite
the above inequality as 
\begin{equation}\label{monk}
\begin{aligned}
	&\int_{z\in \calG(u_\ep)}
	\int_{\omega(z, \ep, f) \times\{z\}} 
	e_\ep^{2d}(|u_\ep|) + \frac18 \left|\frac  {ju_\ep}{|u_\ep|}  
	- j^*_\omega(h_\ep f(z))\right|^2 \, dx\,dz
	\\
	&\qquad\qquad \le G_\ep(u_\ep) - G_0(f) -
	\left( \int_\Omega \frac {| \partial_z u_\ep|^2}2\, dx\,dz 
	- \frac \pi 2\int_0^L |f'(z)|^2\,dz \right)
	\\
	&\qquad\qquad \qquad
	-
	\int_{z\in \calB(u_\ep)} 
	\left(\int_{\omega \times \{z\} } e^{2d}_\ep(u)\, dx - 
	\kappa(n,\ep,\omega) - \pi \calW(f(z))\right) \, dz + o(1).
\end{aligned}
\end{equation}
Clearly $|\calW(f)|$ is bounded by a constant depending on $n, \rho_0$ and $\|f\|_{H^1}$,
and it follows that $ \kappa(n,\ep,\omega) + \pi \calW(f(z)) \le (\pi n + \frac \theta 2)\logeps$
for all  sufficiently small $\ep$. Then the definition of $\calB(u_\ep)$ implies that 
$\int_{\omega \times \{z\} } e^{2d}_\ep(u)\, dx - \kappa(n,\ep,\omega) 
-\pi\calW(f(z)) \ge \frac  \theta 2\logeps$ when $z\in \calB(u_\ep)$.
Taking $\ep$ smaller, if necessary, we may assume by \eqref{local.dz2.bound} that 
\[
	\int_\Omega \frac {| \partial_z u_\ep|^2}2\, dx\,dz 
	- \frac \pi 2\int_0^L |f'(z)|^2\,dz \ge  - 
	\varpi\delta
\]
for $\varpi>0$ to be chosen. Employing this in  \eqref{monk} and discarding the left-hand side, we
deduce that 
\[
	|\calB(u_\ep)|  \le \frac 4\theta (\Sigma_\ep +\varpi\delta)  \logeps^{-1}
\]
for all sufficiently small $\ep>0$. Returning to \eqref{monk} with this new information, we deduce
that 
\begin{align*}
	\int_{z\in \calB(u_\ep)}  \int_{\omega \times \{z\} } e^{2d}_\ep(u)\, dx\, dz
	&\le \Sigma_\ep + \varpi \delta  
	+ \frac 4\theta (\Sigma_\ep +\varpi\delta)(n\pi+\frac\theta 2) \\
	& \le (3+\frac {4n\pi}\theta)\Sigma_\ep  +\frac \delta 4 + o(1)
\end{align*}
provided $\varpi\le \frac 14 $ is chosen small enough, depending only on $n$ and $\theta$, which
itself is universal. Then, since 
\[
	e_\ep^{2d}(|u_\ep|)+\frac18 \left|\frac  {ju_\ep}{|u_\ep|}  - j^*_\omega(h_\ep f(z))\right|^2
	\le
	e^{2d}_\ep(u) + \frac 14|j^*_\omega(h_\ep f(z))|^2,
\]
we use \eqref{monk} and the above estimate of $|\calB(u_\ep)|$ to find that
\[
\begin{aligned}
	&\int_0^L
	\int_{\omega(z, \ep, f) \times\{z\}} 
	e_\ep^{2d}(|u_\ep|) + \frac18 \left|\frac  {ju_\ep}{|u_\ep|}  
	- j^*_\omega(h_\ep f(z))\right|^2 \, dx\,dz
	\\
	\qquad \qquad&\le  (4+\frac {4n\pi}\theta)\Sigma_\ep  +\frac \delta 2
	+
	\int_{z\in \calB(u_\ep)} 
	\int_{\omega(z,\ep, f) \times \{z\} }  
	\frac 14|j^*_\omega(h_\ep f(z))|^2 dx  \, dz + o(1).
	\end{aligned}
\]
Finally,
\[
	\int_{\omega(z,\ep, f) \times \{z\} }  
	\frac 14|j^*_\omega(h_\ep f(z))|^2 dx  \, dz
	\le  C|\log h_\ep| = o(\logeps)
\]
for a constant that depends only on $n$ and  $\|f\|_{H^1}$ and $r$; 
this can be verified by arguments similar to those in Lemma \ref{lem:jstardiff} below.
Using this in the above inequality, we conclude that
\[
	\int_0^L
	\int_{\omega(z, \ep, f) \times\{z\}} 
	e_\ep^{2d}(|u_\ep|) + \frac18 \left|\frac  {ju_\ep}{|u_\ep|}  
	- j^*_\omega(h_\ep f(z))\right|^2 \, dx\,dz 
	\le \left(\frac 4 \theta n\pi + 4\right) \Sigma_\ep + \frac 34 \delta 
\]
for all sufficiently small $\ep$.  This contradicts \eqref{eq:reibilag} and
completes the proof of \eqref{eq:galibier}.
\end{proof}

Note that one can repeat the above proof with essentially no change, after
replacing $f$ in \eqref{eq:reibilag} and the two preceding assumptions by a
sequence $\tilde f^\ep$  with a uniform upper bound on $\|\tilde f^\ep \|_{H^1}$
and the uniform lower bound on $\rho_{ \tilde f^\ep}\ge 4r$, for $r$ fixed. Then
essentially\footnote{after extracting a uniformly convergent subsequence of $\{
	\tilde f^\ep\}$ } the same argument as above leads to the same
contradiction, establishing \eqref{eq:galibier} with $\ep_3,c_3$ that depend
only on $\|f \|_{H^1}$ and $r$. 

Next is the lemma that was used above.

\begin{lemma}\label{lem:jstardiff}
Assume that $a,a'\in \omega^n$ and that there exist $r_0 \ge r_1>0$ such that
\[
	\dist(a_i,\partial \omega)>r_0\qquad \mbox{ and }
	\qquad |a_i-\tilde a_i|\le \frac 12 r_1\le \frac 1{4}\rho_a
	\quad\mbox{ for all }i.
\]
Then
\[
	\int_{\omega\setminus \cup B_{r_1}(a_i)} |j^*_\omega(a)  
	- j^*_\omega(a')|^2\, dx \le
	C(n,r_0,\omega)|a-a'|^2 + C(n)(\frac{ |a-a'|}{r_1})^2.
\]
In particular, the above constants are independent of $r_1$.
\end{lemma}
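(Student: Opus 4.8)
The plan is to split $j^*_\omega(\cdot\,;a)$ into its explicit Biot--Savart part and a harmonic remainder and to estimate the two contributions separately. Recall from Section~\ref{sec:furnot} that, writing $K(y):=y^\perp/|y|^2$ for the Biot--Savart kernel on $\R^2\setminus\{0\}$,
\[
	j^*_\omega(x;a)=\sum_{i=1}^n\Big(K(x-a_i)+\nabla_x^\perp H_\omega(x,a_i)\Big),
\]
so that
\[
	j^*_\omega(x;a)-j^*_\omega(x;a')
	=\sum_{i=1}^n\big(K(x-a_i)-K(x-a'_i)\big)
	+\sum_{i=1}^n\big(\nabla_x^\perp H_\omega(x,a_i)-\nabla_x^\perp H_\omega(x,a'_i)\big).
\]
Using $|p+q|^2\le 2|p|^2+2|q|^2$ together with Cauchy--Schwarz in the sum over $i$ (which costs a factor $n$), it suffices to bound, for each fixed index $i$, the $L^2$ norm over $\omega\setminus\cup_j B_{r_1}(a_j)$ of each of the two single--index differences above.

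For the harmonic terms I would invoke smooth dependence of $H_\omega(\cdot,a_i)$ on the parameter $a_i$. As $a_i$ ranges over the compact set $\{x:\dist(x,\partial\omega)\ge r_0\}$, the Dirichlet datum $x\mapsto-\log|x-a_i|$, $x\in\partial\omega$, depends smoothly on $a_i$ with all $C^k(\partial\omega)$ seminorms bounded in terms of $r_0$ and $\omega$; by standard elliptic estimates the harmonic extension then depends smoothly on $a_i$ in $C^1(\overline\omega)$, whence
\[
	\|\nabla_x^\perp H_\omega(\cdot,a_i)-\nabla_x^\perp H_\omega(\cdot,a'_i)\|_{L^2(\omega)}
	\le C(r_0,\omega)\,|a_i-a'_i|.
\]
Summing over $i$ produces the $C(n,r_0,\omega)|a-a'|^2$ contribution. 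Note that this part involves no $r_1$ whatsoever, which is the origin of the final remark about independence from $r_1$.

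For the singular terms, fix $i$ and a point $x\in\omega\setminus\cup_j B_{r_1}(a_j)$; in particular $|x-a_i|\ge r_1$. Since $|a_i-a'_i|\le\tfrac12 r_1$, every point $w$ on the segment joining $x-a_i$ to $x-a'_i$ satisfies $|w|\ge|x-a_i|-\tfrac12 r_1\ge\tfrac12|x-a_i|\ge\tfrac12 r_1>0$, so the segment stays away from the singularity of $K$. Combining the mean value theorem with the homogeneity bound $|DK(w)|\le C|w|^{-2}$ gives the pointwise estimate $|K(x-a_i)-K(x-a'_i)|\le C\,|a_i-a'_i|\,|x-a_i|^{-2}$, and integrating the square over the larger set $\{|x-a_i|\ge r_1\}$ in polar coordinates (using $\int_{r_1}^\infty\rho^{-4}\rho\,d\rho=(2r_1^2)^{-1}$) yields
\[
	\int_{\omega\setminus\cup_j B_{r_1}(a_j)}|K(x-a_i)-K(x-a'_i)|^2\,dx
	\le C\Big(\frac{|a_i-a'_i|}{r_1}\Big)^2
\]
with $C$ universal. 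Summing over $i$ and combining with the harmonic bound gives the asserted inequality.

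There is no serious obstacle here --- this is a routine elliptic-regularity plus kernel-difference computation. The one point that must be handled with care is that the integration domain be kept inside $\{|x-a_i|\ge r_1\}$ for \emph{each} index $i$ separately, so that the mean value theorem is applied only along segments bounded away from $0$; this is precisely what removing all the balls $B_{r_1}(a_j)$ (not only $B_{r_1}(a_i)$) guarantees. The hypothesis $|a_i-a'_i|\le\tfrac14\rho_a$ is used only to ensure that $a'$ is again an admissible configuration in $\omega^n$ (and, incidentally, that $r_1\le\tfrac12\rho_a$, so that the removed balls are disjoint); it plays no quantitative role in the estimate.
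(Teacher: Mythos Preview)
Your proof is correct and follows essentially the same approach as the paper: both split $j^*_\omega$ into the explicit kernel $K(x-a_i)=(x-a_i)^\perp/|x-a_i|^2$ plus the harmonic correction $\nabla_x^\perp H_\omega$, bound the harmonic contribution by $C(r_0,\omega)|a-a'|$ via smooth dependence on the pole, and bound the kernel contribution pointwise by $C|a_i-a'_i|\,|x-a_i|^{-2}$ before integrating in polar coordinates over $\{|x-a_i|\ge r_1\}$. The only cosmetic difference is that the paper obtains the pointwise kernel bound by a direct algebraic computation (valid when $|x-a_i|\ge 2|a_i-a'_i|$, which holds since $|a_i-a'_i|\le\tfrac12 r_1\le\tfrac12|x-a_i|$), whereas you use the mean value theorem along the segment; the two arguments are equivalent.
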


\begin{proof}Using notation from Section \ref{sec:furnot}, 
\begin{align*}
	|j^*_\omega(x;a) - j^*_\omega(x;a') |^2 
	&\le 2n \sum_i \left| \frac{x-a_i}{|x-a_i|^2} - \frac{x-a_i'}{|x-a_i'|^2}\right|^2\\
	&\qquad  + 2n\sum_i |\nabla H_\omega(x,a_i) - \nabla H_\omega(x, a_i')|^2.
\end{align*}
The definition of $H_\omega$ and the maximum principle imply that 
\[ 
	|\nabla H_\omega(x,a_i) - \nabla H_\omega(x, a_i')| \le C(r_0)|a_i-a_i'|,
\]
and a short computation shows that if $|x-a|\ge 2 |a-a'|$, then
\[
	\left| \frac{x-a_i}{|x-a_i|^2} - 
	\frac{x-a_i'}{|x-a_i'|^2}\right|^2 \le 4 \frac{|a_i-a_i'|^2}{|x-a_i|^4}.
\] 
Thus
\begin{align*}
	&\int _{\omega\setminus \cup B_{r_1}(a_i)}
	|j^*_\omega(x;a) - j^*_\omega(x;a') |^2 \\
	&\qquad\qquad\le 
	2n|a-a'|^2 \int_{\R^2\setminus B_{r_1}(0)} |x|^{-4}dx
	+ C(n,r_0,\omega)|a-a'|^2 
\end{align*}
from which the conclusion of the lemma follows.
\end{proof}

\begin{proof}[Proof of \eqref{eq:lautaret}]

Assume toward a contradiction that there is a subsequence along which
\eqref{lessthanc2}, \eqref{eq:nakaayi} and \eqref{eq:izoard} hold for every
$\ep$, but there exists $\eta_1>0$ such that
\begin{equation}
	\begin{aligned}
	\lim_{\ep\to 0}
	h_\ep^{-1}\int_0^L \|J_x u_\ep(\cdot,z) - 
		\pi\sum_{i=1}^h  \delta_{h_\ep f_i(z)}\|_{F(\omega)} dz 
	&\ge  
	\lim_{\ep\to 0}
	\left(\pi n L (T^f_{r,\ep}(u_\ep)+\eta_1)\right)^\frac12 \\
 	& =: (\pi n L(T_{lim}+\eta_1))^{1/2}.
	\end{aligned}
	\label{gs1}
\end{equation}
Clearly \eqref{lessthanc2}, \eqref{eq:nakaayi} imply that the hypotheses of
	Proposition \ref{P:CJ} are satisfied (with a larger constant in
	\eqref{CJ.h1} than in \eqref{lessthanc2}), so we may use the proposition to
	find a subsequence, still denoted $(u_\ep)$, and a function $f^0\in
	H^1((0,L),(\R^2)^n)$ such that 
\begin{equation}\label{gs1a}
	\int_0^L \| J_x u_\ep(\cdot ,z) - \pi\sum_{j=1}^n 
	\delta_{h_\ep f_j^0(z)}\|_{W^{-1,1}(\omega)} dz = o(h_\ep)
\end{equation}
as $\ep\to 0$. 

We will first show that, after choosing $c_3$ suitably small and possibly
	relabelling,
\begin{equation}
	\| f_j - f^0_j\|_{L^\infty((0,L))} \le r \quad\mbox{ for }j=1,\ldots, n.
\label{ff-f0}\end{equation}
We start by noting from \eqref{lessthanc2}, \eqref{gs1}, and \eqref{gs1a} that 
\[
	\left(\pi nL (T_{lim} + \eta_1)\right)^\frac12  \le 
	\lim_{\ep \to 0}\frac 1 h_\ep \int_0^L \| \pi \sum_j (\delta_{h_\ep f_j(z)} - 
	\delta_{h_\ep f_j^0(z)}) \|_{W^{-1,1}(\omega)} \le  c_3.
\]
It follows from \eqref{BCL} that for all sufficiently small $\ep$ and all $z$,
\[
	\| \pi \sum_j (\delta_{h_\ep f_j^0(z)} - 
	\delta_{h_\ep f_j(z)}) \|_{W^{-1,1}(\omega)}
	=
	\pi h_\ep \min_{\sigma\in S_n} \sum_j 
	|f_j(z) - f_{\sigma(j)}^0(z)|.
\]
Thus
\begin{equation}\label{f-f0.L1}
  \left(\pi nL T_{lim}\right)^\frac12 +\eta_1 \le 
	\pi \int_0^L \min_{\sigma\in S_n} \sum_j 
	|f_j(z) - f_{\sigma(j)}^0(z)| \ dz \le c_3.
\end{equation}
In particular, this implies that
\[
	\| f\|_{L^1}  \le C(f^0, c_3).
\]
It follows from a Sobolev embedding and \eqref{fbounds} that there exists $C =
	C(f^0,c_2,c_3)$ such that 
\begin{equation}
	[ f ]_{C^{0,1/2}} \le \| f'\|_{L^2} \le C,\qquad
	\mbox{ and thus }
	[ f - f^0 ]_{C^{0,1/2}} \le C . 
	\label{f.holder}
\end{equation}
Next, we deduce from \eqref{f-f0.L1} and Chebyshev's inequality that
\[
	\left|
	\left\{ z\in (0,L) : 
	\min_{\sigma\in S_n} \sum_j  |f_j(z) - f_{\sigma(j)}^0(z)|  > r/2
	\right\} \right| \le \frac {2c_3}r.
\]
If $\min_{\sigma\in S_n} \sum_j  |f_j(z_0) - f_{\sigma(j)}^0(z_0)|>r$ for any
$z_0\in (0,L)$, then it follows from \eqref{f.holder} that 
\[
	\min_{\sigma\in S_n} \sum_j  |f_j(z) - f_{\sigma(j)}^0(z)|   >r/2
	\quad\mbox{ for all }z\in (0,L)\mbox{ such that }|z-z_0| < r^2/C.
\]
Fixing $c_3$ small enough (which only decreases the constant $C(f_0,c_2, c_3)$
in \eqref{f.holder}), we can arrange that the two above estimates are
incompatible. (This adjustment to $c_3$ again depends only on $\rho_f\ge 4r$ and
$\|f\|_{H^1}$.) It follows that for this choice of $c_3$, 
\[
	\min_{\sigma\in S_n} \sum_j  |f_j(z) - f_{\sigma(j)}^0(z)| 
	\le r \qquad\mbox{ for every } z\in (0,L).
\]
As a result,  we can find a single permutation $\pi$, independent of $z$, such
that $\sum_j |f_j(z) - f_{\pi(j)}^0(z)| = \min_\sigma \sum_j  |f_j(z) -
f_{\sigma(j)}^0(z)| \le r$ for all $z$. Using this permutation $\pi$ to relabel
the indices, we obtain \eqref{ff-f0}. 

If we write $\varphi(x) :=  \chi_r(\frac{|x|}{h_\ep})(\frac{|x|}{h_\ep})^2$,
then since $\|\nabla_x \varphi \|_\infty \le C/h_\ep$, it follows from
\eqref{lessthanc2},  \eqref{gs1a} that 
\begin{align*}
	T_{lim} & = \pi \sum_{i,j} \int_0^L  
	\chi_r(|f_j(z) - f^0_i (z)|) |f_j(z) - f^0_i(z)|^2\, dz.
\end{align*}
However, since $|f^0_i - f^0_j|\ge 4r$, we see from \eqref{ff-f0} that
\[
	\chi_r(|f_j(z) - f^0_i(z) |) = 
	\delta_{ij}\quad\mbox { for all $i,j$ and all $z\in(0,L)$}.
\]
So we obtain
\[
	\pi \| f - f^0\|_{L^2}^2  = T_{lim}.
\]
On the other hand, since we have by now arranged that 
\[
	\min_\sigma \sum_j  |f_j(z) - f_{\sigma(j)}^0(z)| = 
	\sum_j  |f_j(z) - f_j^0(z)| 
	\le \sqrt n |f(z)-f^0(z)|\quad\mbox{ for all $z$,}
\]
we pass to the limit in \eqref{gs1} to find that 
\[
	\sqrt{n\pi L} (\sqrt \pi \| f - f^0\|_{L^2}  +\eta_1) \le 
\sqrt{n}    \pi \| f - f^0\|_{L^1},
\]
in contradiction to the Cauchy-Schwarz inequality. Thus \eqref{eq:lautaret}
holds.
\end{proof}

\section{Compactness in time}\label{sec:compact}

In this last section we present the proofs of Proposition \ref{prop:4},
Corollary \ref{cor:1} and Proposition \ref{P:jstar}.

\subsection{Proof of Proposition \ref{prop:4}}
\begin{proof} We only need to prove \eqref{eq:bardet1}, since all other
	conclusions follow from that and Proposition \ref{prop:3}. 

To prove \eqref{eq:bardet1}, we define the stopping time
\[
	t^* := \sup \{ t>0 : u_\ep(\cdot, \cdot, h_\ep^2 s) 
	\mbox{ satisfies } \eqref{lessthanc2}, 
	\eqref{eq:izoard}\mbox{ for all }s\in (0,t)\}
\]
where $f$ should be replaced by $g$ in \eqref{lessthanc2}, \eqref{eq:izoard}. By
	a change of variables, 
\[
	T^g_{r,\ep}(u_\ep(\cdot, \cdot, h_\ep^2 t)) = 
	T^g_{r}(v_\ep(\cdot, \cdot, t)) ,
\]
where $T^g_r := T^g_{r,1}$ and $u_\ep, v_\ep$ are related by
	\eqref{rescale.witht}.  We use \eqref{dtJzv.int} with $\vp(x,z,t) =
	\chi^g_r(x,z)$ to find that
\[
	\frac d{dt} T^g_r(v_\ep(\cdot, \cdot, t)) \le 
	\left|
	\int \ep_{ab}\partial_{ac} \chi^g_r  \  \partial_b\ve \cdot \partial_c\ve \,dx\,dz
	\right|
	+
	\left|
	\int \ep_{ab}\partial_{az} \chi^g_r  \frac{\partial_b \ve\cdot \partial_z \ve}\logeps \,dx\,dz
	\right|.
\]
The definition of $\chi^g_r$ implies that $\partial_{ac}\chi^g_r(x,z) = 2 \delta_{ac}$
when $|x-g_i(z)|<r$ for some $i$, and hence that 
\[
	\ep_{ab}\partial_{ac} \chi^g_r  \  \partial_b\ve \cdot \partial_c\ve  = 0\mbox{ in }\cup_i
	B(g_i(z),r).
\]
In addition,
\[
	|\nabla_x v_\ep|^2 \le
	2 e_\ep(|v_\ep|) + \frac {|j(v_\ep)|^2 } {|v_\ep|^2} 
	\le
	2 e_\ep(|v_\ep|) + 2 \left|\frac  {ju_\ep}{|u_\ep|}  - j^*_\omega(h_\ep f)\right|^2
	+
	2\left| j^*_\omega(h_\ep f)\right|^2.
\]
The definition of $t^*$ allows us to apply estimates from Proposition \ref{P:CJ}
	(with $c_1 = c_4 + n\pi L \|g\|_\infty$ and $c_2 = G_0(g)+1$)  and
	Proposition \ref{prop:3} (with $\delta = \Sigma  = 1$ for example) to
	$v_\ep(\cdot,\cdot, t)$, for any $t\in (0, t^*)$, as long as $c_4, \ep_4$
	are taken to be small enough, depending only on $\| g \|_{H^1}, n$ and $r$.
	We may therefore deduce from  \eqref{eq:galibier} that 
\[
	\left|
	\int \ep_{ab}\partial_{ac} \chi^g_r  \  \partial_b\ve \cdot \partial_c\ve \,dx\,dz
	\right|
	\le C(K_3 + 1 ) \| \nabla^2_x \chi^g_r\|_\infty  = C(r, n,g).
\]
The remaining integral on the right-hand side is estimated by using \eqref{dzu2}
	(which after rescaling to $v_\ep$ acquires a factor of $\logeps^{-1}$) to
	find that
\begin{align*}
	\left|
	\int \ep_{ab}\partial_{az} \chi^g_r  
	\frac{\partial_b \ve\cdot \partial_z \ve}\logeps \,dx\,dz
	\right|
	&\le \frac1 \logeps \| \nabla_x \partial_z \chi^g_r\|_{L\infty} 
	\, \|\nabla_x v_\ep\|_{L^2}
  	\|\nabla_z v_\ep\|_{L^2} \\
	&\le
	\|g\|_{Lip}  C(c_1,c_2, n) . 
\end{align*}
Thus 
\[
	\frac d{dt} T^g_r(v_\ep(\cdot, \cdot, t)) \le 
	C(r, n,\|g\|_{H^1})+\|g\|_{Lip}  C(c_1,c_2, n)  =: C_4.
\]
It follows that \eqref{eq:bardet1} holds for all $t\in (0,t^*)$. Then, thanks to
\eqref{eq:bardet2} and \eqref{eq:bardet3}, we conclude that $t^*\ge t_4$,
completing the proof of \eqref{eq:bardet1}.
\end{proof}

\subsection{Proof of Corollary \ref{cor:1}}

\begin{proof}
Since $f(0)$ may not be a Lipschitz function, we first mollify it to a function
	which we call $g$ and which we require to satisfy $\sup_{i,z}|f_i(0,z) -
	g_i(z)| < \alpha \rho_{f(0)}$ for some $\alpha<1/8$ to be chosen, and thus
	$\rho_g > (1-2\alpha)\rho_{f(0)}.$ Since $f(0)$ is already in $H^1$, we have
	that $|g|_{H^1} \leq |f(0)|_{H^1}$. Proposition \ref{prop:4}, applied to
	$g$, $r=\rho_g/4$, provides us with constants $\ep_4,t_4,c_4,C_4$, the
	important point being that $\ep_4$ and $c_4$ do \emph{not} depend on the
	strength of the mollification. Without loss of generality, we may also
	assume that $c_4 \le \frac 18 \rho_{f(0)}$. In view of the assumptions of
	Theorem \ref{T.main}, we may assume, decreasing the value of $\ep_4$ if
	necessary, that \eqref{eq:barguil1} and \eqref{eq:barguil2} hold for every
	$\ep\leq \ep_4.$ Finally, it is clear that $|| \chi^g_{r,\ep}(\cdot,
	z)\|_{W^{1,\infty}(\omega)} \le C(r)h_\ep^{-1}$ for every $z\in (0,L)$, so
	assumption \eqref{mainthm.h1} implies that $\limsup_{\ep\to
	0}T^g_{r,\ep}(u_\ep^0) \le \pi \|f(0)-g\|_{L^2}^2$.  We may therefore
	assume, decreasing $\ep_4$ further if necessary, that
	$T^g_{r,\ep}(u^0_\ep)\le 2\pi  \|f(0)-g\|_{L^2}^2 \le 2n\pi^2\alpha^2L
	\rho_{f(0)}^2$ for every $\ep\leq \ep_4$, and in particular that
	\eqref{eq:barguil3} holds.  In view of \eqref{eq:barguil1} and
	\eqref{eq:bardet3}, we may then apply Proposition \ref{P:CJ} for each fixed
	time $t \in [0,t_4]$ and derive some limiting $f^*(t)$ after passing to a
	possible subsequence. 

The potential difficulty at this level is that the subsequence may depend on the
	value of $t$; to overcome this we will rely on the form of continuity in
	time provided by estimate \eqref{eq:bardet1}. We first derive some estimates
	that apply to any limit $f^*(t)$ produced by the above argument. Note that
	\eqref{eq:bardet2} and \eqref{comp} imply that 
\[
	\frac 1 {h_\ep}\int_0^L \|  \pi\sum_{i=1}^n \delta_{h_\ep f^*_i(z,t)}
	- \pi\sum_{i=1}^n  \delta_{h_\ep g_i(z)}\|_{W^{-1,1}(\omega)} dz  
	\le  \left( n\pi L (T^g_{r,\ep}(u_\ep^0)+C_4 t)\right)^\frac12,
\]
and \eqref{fbounds} implies that $\| f^*(t)\|_{H^1}\le C(G_0(g))$. Using
	\eqref{BCL},
\[
	\int_0^L \min_{\sigma\in S_n} | f^*_{\sigma(i)}(z,t) - g_i(z)| dz \
	\le   \left(n \pi L (T^g_{r,\ep}(u_\ep^0)+C_4 t)\right)^\frac12  .
\]
Since $f^*(t)-g$ is uniformly bounded in $H^1$, by choosing  $t_0\le t_4$ and
	$\alpha$ sufficiently small, we conclude that 
\begin{align*}
	\max_z \min_{\sigma\in S_n} | f^*_{\sigma(i)}(z,t) - f^0_i(z)| 
	&<
	\max_z\min_{\sigma\in S_n} | f^*_{\sigma(i)}(z,t) 
	- g_i(z)|  + |f^0_i(z)-g_i(z)|
	\\ &\le \frac 1{16}\rho_{f(0)}
\end{align*}
for all $t\in [0,t_0]$.  It follows that there is a single permutation $\sigma$
	that attains the min for all $z$. After relabelling $f^*$ if necessary, we
	deduce that \eqref{whoisbardet} holds when $s=0$. Finally, using the
	$L^\infty$ continuity of $s\mapsto f(\cdot, s)$ and decreasing $t_0$ as
	needed, we deduce that \eqref{whoisbardet} holds for all $s,t\in [0,t_0]$.

To prove continuity in time, we start by using a Cantor diagonal argument to fix
	a subsequence $\eps \to 0$ such that
$$
	\int_0^L \| J_x u_\ep(\cdot ,z,h_\ep^2t) 
	- \pi\sum_{j=1}^n \delta_{h_\ep f^*_j(z,t)}\|_{W^{-1,1}(\omega)} dz 
	= o(h_\ep) \qquad \mbox{ as }\ep\to 0
$$
for every time $t$ in $\Q \cap [0,t_0]$. We claim that the mapping $t \mapsto
f^*(t)$ is uniformly continuous from $\Q \cap [0,t_0]$ into $L^1([0,L]).$
Indeed, let $\eta>0$ be given, and let $s_0,s_1 \in \Q \cap [0,t_0]$ be
arbitrary.  We write 
\begin{equation}\label{eq:uran1}
	\sum_i \|f_i^*(s_0)-f_i^*(s_1)\|_{L^1} 
	\leq \sum_i \|f_i^*(s_0)-g_i^*(s_0)\|_{L^1} + 
	\sum_i \|g_i^*(s_0)-f_i^*(s_1)\|_{L^1}
\end{equation}
where $g^*(s_0)$ is a mollification of $f^*(s_0)$. It follows from
\eqref{fbounds} that  $t\mapsto f^*(t)$ is uniformly bounded with values into
$H^1$ , so we may fix the mollification parameter sufficiently fine, but
independently of $s_0$, such that 
\begin{equation}\label{eq:uran2}
	\sum_i \|f_i^*(s_0)-g_i^*(s_0)\|_{L^1} \leq \eta/2. 
\end{equation}
Next, we pass to the limit in the conclusions of Proposition \ref{prop:4}
applied this time to $g=g^*(s_0)$ and conclude that 
\begin{equation}\label{eq:uran3}
	\begin{split}
	&\pi\sum_i \|g_i^*(s_0)-f_i^*(s_1)\|_{L^1}\\
	&= \lim_{\ep \to 0}h_\ep^{-1} \int_0^L \|J_x u_\ep(\cdot,z,h_\ep^2s_1) 
		- \pi\sum_{i=1}^n \delta_{h_\ep g^*_i(z,s_0)}\|_{W^{-1,1}(\omega)} dz\\
	&\le \lim_{\ep \to 0}  \left(n \pi L (T^{g^*(s_0)}_{r,\ep}(u_\ep^0)
		+ C_4 |s_1-s_0|)\right)^\frac12\\
	&\le \left( n \pi L ( \pi \|g^*(s_0)-f^*(s_0)\|_{L^2}^2
		+C_4 |s_1-s_0|)\right)^\frac12,
\end{split}\end{equation}
where $C_4$ depends only on the mollification parameter. (We have implicitly
	used  the fact that components of $f^*$ have been labelled correctly, as
	reflected in \eqref{whoisbardet}.) We therefore further decrease the
	mollification parameter if necessary, yet independently of $s_0$, so that
	$n\pi^2 L \|g^*(s_0)-f^*(s_0)\|_{L^2}^2 \leq \eta^2/32.$ Once this, and
	hence $C_4$ are fixed, we require $|s_0-s_1|$ to be small enough so that
	$n\pi LC_4 |s_1-s_0| \leq \eta^2/32.$ Combining \eqref{eq:uran2} and
	\eqref{eq:uran3} in \eqref{eq:uran1} yields the uniform continuity of $f^*.$
	In the sequel we denote still by $f^*$ the unique continuous extension of
	$f^*$ to the whole interval $[0,t_0].$ We claim that the conclusion of
	Corollary \ref{cor:1} holds for any $t \in [0,t_0],$ with no need of further
	subsequences. Indeed, this follows from the fact that for each fixed $t$ in
	$[0,t_0]$ there exist at least some further subsequence for which the
	convergence to some $f^{**}(t)$ holds (this is by Proposition \ref{P:CJ} as
	we already saw it), and on the other hand by our previous argument (equally
	applied to the countable set $(\Q \cap [0,t_0]) \cup \{t\}$) the only
	possible limit along \emph{any} such subsequence is necessarily equal to
	$f^*(t).$
\end{proof}
\subsection{Proof of Proposition \ref{P:jstar}}
\begin{proof}
For $r,R>0$, define 
\[
	\mathcal G_{r,R } :=  \{ (t,x,z)\in [0,t_0]\times B(R) \times [0,L] : 
	|x - f_k^*(z,t)|\ge r, \ k=1,\ldots, n \}.
\]

Given $\mathcal O$ as in the statement of the Proposition, we may fix $r,R>0$
	such that $\mathcal O \subset \mathcal G_{r,R}$.

We will only consider $\ep$ small enough that $B(R)\subset \omega_\ep$.
It is then rather clear that 
\[
	j_{\omega_\ep}^*(f^*(t)) \rightarrow j^*_{\R^2}(f^*(t)) \ \ 
	\mbox{ locally uniformly on } \mathcal G_{r,R}\mbox{ for every }r>0.
\]
It thus follows from Proposition \ref{prop:3}  (with $\Sigma=\delta =1$,
rewritten in terms of $v_\ep$) that 
\[
	\left\| \frac  {jv_\ep}{|v_\ep|}  - j^*_{\R^2} \right\|_{\mathcal G_{r,R}} 
	\le C
\]
for all sufficiently small $\ep$, where $C$ is independent of $r$ and $R$.  By
	extracting weak limits and employing a Cantor diagonal argument, we conclude
	that there exists  a vector field $H\in L^2([0,t_0]\times \R^2\times \TL)$
	such that
\[
	\frac  {jv_\ep}{|v_\ep|}  - j^*_{\R^2}\rightharpoonup H
	\mbox{ weakly in }L^2(\mathcal G_{r,R})\mbox{ for every }r,R>0.
\]

Now fix $\vp\in \calD((0,t_0)\times \R^2\times \TL)$ and compute, for $\ep$
	sufficiently small,
\begin{align}
	\left|\int \nabla_x^\perp \vp \cdot \frac {jv_\ep}{|v_\ep|}  -
	\int \nabla_x^\perp \vp \cdot  jv_\ep \right|
	&\le
	\int |\nabla_x^\perp \vp| \,  \left|\frac {jv_\ep}{|v_\ep|}\right| 
	\, \left|1-|v_\ep|  \right |
	= o(1)
	\label{getridofv}
\end{align}
as $\ep\to 0$, in view of  the pointwise inequality $ \left|\frac
	{jv_\ep}{|v_\ep|}\right| \, \left|1-|v_\ep|  \right | \le \ep e_\ep(v_\ep)$
	and the energy bound on $v_\ep$. Next, integrating by parts and using
	Corollary \ref{cor:1} and the definition of $j^*_{\R^2}$, 
\[
	\int \nabla_x^\perp \vp \cdot  jv_\ep
	= 
	2 \int \vp Jv_\ep \to 
	\int_0^{t_0} \int_0^L \sum_{i=1}^n \vp (f_i(z),z)\, dz\, dt
	=
	\int \nabla_x^\perp \vp \cdot  j^*_{\R^2}(f^*) .
\]
By combining these and using the fact that $H\in L^2$, which implies that the
	singularities along $\{ (t, f_i(z),z) : t\in [0,t_0], z\in [0,L], i=1,\ldots
	n \}$ are removable, we infer that $\nabla^\perp\cdot H = 0$ on  $\R \times
	\R^2\times \TL$. Similarly, by \eqref{massv}, 
\[
	\int \nabla_x \vp \cdot  jv_\ep
	= 
	-\int \partial_t\vp (|v_\ep|^2-1) + h_\ep^2 \partial_z \vp \, j_zv_\ep 
	\to 0,
\] 
since $(v_\ep|^2-1)^2 \le 4 \ep^2e_\ep(v_\ep)$ and
$
	\left|h_\ep^2 \partial_z \vp \, j_zv_\ep \right|
	\le h_\ep |\partial_z\vp| (\frac{ |\partial_z v_\ep|^2}\logeps + |v_\ep|^2 ),
$
together with  \eqref{dzu2}, rescaled to read $\| \nabla
v_\ep(t)\|_{L^2(dx\,dz)}^2 \le C\logeps$ for every $t\in [0,t_0]$. Arguing as in
\eqref{getridofv} to eliminate the factor of $|v_\ep|$ in the denominator and
recalling that $\nabla_x\cdot j^*_{\R^2}(f^*)=0$ by definition, we conclude that
\[
	\int \nabla_x \cdot (\frac  {jv_\ep}{|v_\ep|}  - j^*_{\R^2}) \, \rightarrow 0,
\]
and hence that $\nabla_x \cdot H = 0$ in $\calD'$. We conclude by applying Lemma
\ref{Liouville} below to the vector field  $w(t,x,z) = \zeta(t)H(t,x,z)$, where
$\zeta$ is an arbitrary function with compact support in $[0,t_0]$. 
\end{proof}

The proof of Proposition \ref{P:jstar} used the following 

\begin{lemma}\label{Liouville}
Assume that $w\in L^2(\R \times \R^2\times \TL)$ satisfies
\begin{equation}
	\nabla_x\cdot w = 0, \qquad \nabla_x^\perp \cdot w = 0 
	\qquad\mbox{ in }\mathcal D' .
	\label{Lp1}
\end{equation}
Then $w=0$.
\end{lemma}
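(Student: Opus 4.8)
The plan is to reduce to the classical Liouville-type fact that a divergence- and curl-free $L^2$ vector field on $\R^2$ vanishes identically, after first removing the $(t,z)$ variables by slicing.

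\emph{Step 1 (reduction to slices).} I would first check that for almost every $(t,z)\in\R\times\TL$ the slice $v:=w(t,\cdot,z)$ belongs to $L^2(\R^2;\R^2)$ and satisfies $\nabla\cdot v=0$ and $\nabla^\perp\cdot v=0$ in $\calD'(\R^2)$. This is a routine Fubini argument for distributions: testing the identity $\nabla_x\cdot w=0$ against product test functions $\alpha(t)\beta(z)\phi(x)$ shows that, for each fixed $\phi\in\calD(\R^2)$, the function $(t,z)\mapsto\int_{\R^2}w(t,x,z)\cdot\nabla\phi(x)\,dx$ (which lies in $L^2(\R\times\TL)$ by Cauchy--Schwarz and Fubini) vanishes almost everywhere; letting $\phi$ range over a countable subset of $\calD(\R^2)$ dense in the $W^{1,2}$ sense, and using that $v\in L^2(\R^2)$ for a.e.\ $(t,z)$, one obtains a single full-measure set of $(t,z)$ on which $\int v\cdot\nabla\phi=0$ for all $\phi\in\calD(\R^2)$, i.e.\ $\nabla\cdot v=0$ in $\calD'(\R^2)$. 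The same applies to $\nabla^\perp\cdot v$.

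\emph{Step 2 (the planar statement).} I would fix such a slice $v$ and take the Fourier transform in $x$, so that $\hat v\in L^2(\R^2;\C^2)$. The two conditions become $\xi\cdot\hat v(\xi)=0$ and $\xi^\perp\cdot\hat v(\xi)=0$; since each left-hand side is an $L^1_\loc$ function, these identities hold for a.e.\ $\xi$. For every $\xi\ne0$ the real vectors $\xi$ and $\xi^\perp$ span $\R^2$ (equivalently, the corresponding $2\times2$ real matrix has determinant $|\xi|^2>0$), so $\hat v(\xi)=0$. As $\{\xi=0\}$ is negligible, $\hat v\equiv0$ a.e., hence $v=0$; and since this holds for a.e.\ $(t,z)$ we conclude $w=0$. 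Alternatively one may observe that $F:=v_1-iv_2$ satisfies $\bar\partial F=\tfrac12(\nabla\cdot v-i\,\nabla^\perp\cdot v)=0$, so by Weyl's lemma $F$ agrees a.e.\ with an entire function, which vanishes because it lies in $L^2(\C)$.

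I do not expect a genuine obstacle here; the only slightly delicate point is making the Fubini-for-distributions reduction of Step 1 precise, and this can in any case be bypassed by expanding $w$ in a Fourier series in the periodic variable $z$ and applying the two-dimensional argument to each coefficient in $L^2(\R\times\R^2)$.
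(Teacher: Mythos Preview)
Your argument is correct. The paper's proof reaches the same conclusion by a slightly different route: instead of slicing first and then invoking Fourier/Weyl on each slice, it mollifies $w$ in all variables to obtain a smooth $w_\ep$ that still satisfies \eqref{Lp1}, observes that (for smooth fields) $\nabla_x^\perp\cdot w_\ep=0$ gives $w_\ep=\nabla_x f$ with $\Delta_x f=0$, and then applies the classical Liouville theorem to the harmonic slice $w_\ep(t,\cdot,z)\in L^2(\R^2)$ for a.e.\ $(t,z)$; passing to the limit $w_\ep\to w$ in $L^2$ finishes. The mollification step buys exactly what your Step~1 buys---it sidesteps the Fubini-for-distributions reduction and the countable-density argument---so the two proofs differ mainly in where the regularisation happens (before vs.\ after slicing). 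Your Fourier argument in Step~2 is the most direct way to dispatch the planar statement; your Weyl's-lemma alternative is essentially the paper's harmonic/Liouville step in complex notation.
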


\begin{proof}
If $w$ is smooth, then since $\nabla_x^\perp \cdot w=0$, we may write $w =
	\nabla_x f$ for some scalar function $f$. Then the fact that $\nabla\cdot w
	= 0$ implies that $f$ is harmonic, and hence that $w$ is harmonic. For {\em
	a.e.} $t\in \R$ and $z\in \TL$, 
\[
	\int_{\R^2} |w(t,x,z)|^2\, dx = 0,
\]
so Liouville's Theorem implies that $w(t,\cdot,z)=0$ for such $(t,z)$,and
	therefore everywhere in $\R\times \R^2\times \TL$. 

If $w$ is not smooth, then we fix an approximate identity $(\eta_\ep)$, and we
	write $w_\ep := \eta_\ep * w$. Then $w_\ep$ satisfies conditions
	\eqref{Lp1}, with $\|w_\ep\|_{L^2}\le \|w\|_{L^2} <\infty$ for every
	$\ep>0$, and $w_\ep \to w$ in $L^2$, so it follows that $w=0$ {\em a.e.} 
\end{proof}
\smallskip
\noindent

\medskip\noindent
{\it Acknowledgements.} This work was partially supported by the ANR project ODA
(ANR-18-CE40-0020-01) of the Agence Nationale de la Recherche, and
by the Natural Sciences and Engineering Research Council of Canada under Operating Grant 261955.

\end{document}